\titleformat{\section}[block]{\scshape\filcenter\LARGE}{\thesection.}{.5em}{}
\titleformat{\subsection}[block]{\bfseries\filcenter\large}{\thesubsection.}{.5em}{\medskip}
\titleformat{\subsubsection}[runin]{\bfseries}{\thesubsubsection.}{.5em}{}[.]
\titlespacing{\subsubsection}{0pt}{\topsep}{.5em}
\newtheoremstyle{ntheorem}%
	{\topsep}{\topsep}{\itshape}{0pt}{\bfseries}{.}{.5em}%
	{\thmnumber{#2.\hspace{.5em}}\thmname{#1}\thmnote{ (#3)}}
\newtheoremstyle{ndefinition}%
	{\topsep}{\topsep}{\normalfont}{0pt}{\bfseries}{.}{.5em}%
	{\thmnumber{#2.\hspace{.5em}}\thmname{#1}\thmnote{ (#3)}}
\newtheoremstyle{nremark}%
	{\topsep}{\topsep}{\normalfont}{0pt}{\itshape}{.}{.5em}%
	{\thmnumber{}\thmname{#1}\thmnote{ (#3)}}
\theoremstyle{ntheorem}
  	\newtheorem{theorem}[subsubsection]{Theorem}
  	\newtheorem{proposition}[subsubsection]{Proposition}
	\newtheorem{lemma}[subsubsection]{Lemma}
\theoremstyle{ndefinition}
\newcommand{\fll}{\xymatrix@1@C=22pt{\ar [r] &}}
\newcommand{\ofl}[1]{\xymatrix@1@C=22pt{\ar [r] ^-*+{#1} &}}
\renewcommand{\phi}{\varphi}
\renewcommand{\epsilon}{\varepsilon}
\newcommand{\K}{\mathbb{K}}
\newcommand{\Nb}{\mathbb{N}}
\newcommand{\Mr}{\mathcal{M}}
\renewcommand{\Pr}{\mathcal{P}}
\newcommand{\Ur}{\mathcal{U}}
\def\hhmm{\number\hh:\ifnum\mm<10{}0\fi\number\mm}
\def\cone{\mathrm{cone}}
\renewcommand{\div}[1]{\mathcal{#1}}
\def\Q{\mathbb{Q}}
\def\Rb{\mathbb{R}}
\newcommand{\comp}[1]{#1^{\complement}} 
\newcommand{\compp}[2]{#2^{\complement(#1)}}
\def\mult{\mathrm{Mult}}
\def\cmult{\mathrm{^{\complement}Mult}}
\def\cnonmult{\mathrm{^{\complement}NMult}}
\def\nonmult{\mathrm{NMult}}
\def\normf{\mathrm{nf}}
\DeclareMathOperator{\lm}{lm}
\DeclareMathOperator{\lc}{lc}
\DeclareMathOperator{\lt}{lt}
\newcommand{\ideal}[1]{\mathrm{Id}(#1)}
\DeclareMathOperator{\autoreduce}{{\bf Autoreduce}}
\DeclareMathOperator{\integralcond}{{\bf IntCond}}
\DeclareMathOperator{\complete}{{\bf Complete}}
\DeclareMathOperator{\janet}{{\bf Janet}}
\renewcommand{\leq}{\leqslant}
\renewcommand{\geq}{\geqslant}
\def\cwo{\preccurlyeq_{cwo}}
\def\cwostrict{\prec_{cwo}}
\def\wo{\preccurlyeq_{wo}}
\def\wostrict{\prec_{wo}}
\def\jo{\preccurlyeq_{J}}
\newcommand{\auteur}[3]{
\noindent
\begin{minipage}[t]{.45\textwidth}
\begin{flushright}
\textsc{#1} \\
{\footnotesize\textsf{#2}}
\end{flushright} 
\end{minipage}
\qquad
\begin{minipage}[t]{.45\textwidth}
#3
\end{minipage}
}
\begin{document}
\thispagestyle{empty}

\begin{center}

\begin{doublespace}
\begin{huge}
{\scshape From analytical mechanical problems}

\vskip+2pt

{\scshape to rewriting theory through M. Janet}
\end{huge}

\bigskip
\hrule height 1.5pt 
\bigskip

\begin{Large}
{\scshape Kenji Iohara \qquad Philippe Malbos}
\end{Large}
\end{doublespace}

\vspace{2cm}

\begin{small}\begin{minipage}{14cm}
\noindent\textbf{Abstract --}
This note surveys the historical background of the Gr\"{o}bner basis theory for $D$-modules and linear rewriting theory. The objective is to present a deep interaction of these two fields largely developed in algebra throughout the twentieth century. We recall the work of M. Janet on the algebraic analysis on linear partial differential systems that leads to the notion of involutive division. We present some generalizations of the division introduced by M. Janet and their relations with Gr\"{o}bner basis theory.

\smallskip\noindent\textbf{M.S.C. 2010 -- 01-08, 13P10, 12H05, 35A25, 58A15, 68Q42.} 
\end{minipage}\end{small}

\vspace{1cm}

\begin{small}\begin{minipage}{12cm}
\renewcommand{\contentsname}{}
\setcounter{tocdepth}{2}
\tableofcontents
\end{minipage}
\end{small}
\end{center}

\clearpage

\section*{Introduction}

Several lectures of the Kobe-Lyon summer school\footnote{Summer School \emph{On Quivers : Computational Aspects and Geometric Applications}, July 21-31, 2015, Kobe, Japan.} recalled a deep interaction between Gr\"{o}bner bases for $D$-modules and linear rewriting theory. The objective of this note is to survey the historical background of these two fields largely developed in algebra throughout the twentieth century and to present their deep relations. 
Completion methods are the main streams for these computational theories.
In Gr\"{o}bner bases theory, they were motivated by algorithmic problems in elimination theory such as computations in quotient polynomial rings modulo an ideal, manipulating algebraic equations and computing Hilbert series. In rewriting theory, they were motivated by computation of normal forms and linear basis for algebras and computational problems in homological algebra.

\bigskip

In this note we present the precursory ideas of the french mathematician M. Janet on the algebraic formulation of completion methods for polynomial systems. Indeed, the problem of completion already appear in the seminal work of M. Janet in 1920 in his thesis~\cite{Janet20}, that proposed a very original approach by formal methods in the study of linear partial differential equations systems, PDE systems for short. Its constructions were formulated in terms of polynomial systems, but without the notion of ideal and of Noetherian induction. These two notions were introduced by E. Noether in 1921~\cite{Noether1921} for commutative rings.

\bigskip

The work of M. Janet was forgotten for about a half-century. It was rediscovered by F. Schwarz in 1992 in \cite{Schwarz92}. Our exposition in this note does not follow the historical order. The first section deals with the problems that motivate the questions on PDE undertaken by M. Janet. In Section~\ref{Subsection:JanetWork}, we present completion for monomial PDE systems as introduced by Janet in his monograph~\cite{Janet29}. This completion used an original division procedure on monomials. In Section~\ref{Section:InvolutiveDivision}, we present axiomatisation of this Janet's division, called \emph{involutive division}, and due to V. P. Gerdt. The last two sections concern the case of polynomial PDE systems, with the Janet's completion method used to reduce a linear PDE system to a canonical form and the axiomatisation of the reductions involved in terms of rewriting theory.

\subsection*{From analytical mechanical problems to involutive division}

\subsubsection*{From Lagrange to Janet} 
The analysis on linear PDE systems was mainly motivated in 18th century by resolution of analytical mechanical problems. The seminal work of J.-L. Lagrange gave the first systematic study of PDE systems launched by such problems. The case of PDE of one unknown function of several variables has been treated by J. F. Pfaff. The Pfaff problem will be recalled in~\ref{SS:PfaffProblem}.  This theory was developed in two different directions: toward the general theory of differential invariants and the existence of solutions under given initial conditions. The differential invariants approachs will be discussed in~\ref{SS:PfaffProblem} and~\ref{Subsubsection:InvolutionFrobenius}.  The question of the existence of solution satisfying some initial conditions  was formulated in the Cauchy-Kowalevsky theorem recalled in~\ref{SSS:CauchyKowalevskyTheorem}. 

\subsubsection*{Exterior differential systems} 
Following the work of H. Grassmann in 1844 exhibiting the rules of the exterior algebra computation, \'E. Cartan introduced exterior differential calculus in 1899. This algebraic calculus allowed him to describe a PDE system by an exterior differential system that is independent of the choice of coordinates. This leaded to the so called Cartan-Kähler theory, that we will review in Section \ref{SS:CartanKahlerTheory}. We will present a geometrical property of involutivity on exterior differential systems in~\ref{Subsubsection:InvolutionCartan}, that motivates  the formal methods introduced by M. Janet for analysis on linear PDE systems.

\subsubsection*{Generalizations of Cauchy-Kowalevsky's theorem}
Another origin of the work of M. Janet is the Cauchy-Kowalevsky's theorem that gives the initial conditions of solvability of a family of PDE systems that we describe in~\ref{SSS:CauchyKowalevskyTheorem}. \'E. Delassus, C. Riquier and M. Janet attempted to generalize this result to a wider class of linear PDE systems which in turn leaded them to introduce the computation of a notion of normal form for such systems.

\subsubsection*{The Janet monograph}
Section~\ref{Subsection:JanetWork} presents the historical motivations that leaded M. Janet to introduce an algebraic algorithm in order to compute normal form of linear PDE systems. In particular, we recall the problem of computation of \emph{inverse of derivation} introduced in the monograph of M. Janet, $\ll$~\emph{Leçons sur les systèmes d'équations aux dérivées partielles} $\gg$ on the analysis on linear PDE systems, published in 1929,~\cite{Janet29}. 
In this monograph M. Janet introduced formal methods based on polynomial computations for analysis on linear PDE systems. He developed an algorithmic approach for analyzing ideals in the polynomial ring $\K[\frac{\partial}{\partial x_1}, \ldots, \frac{\partial}{\partial x_n}]$ of differential operators with constant coefficients. Having the ring isomorphism between this ring and the ring $\K[x_1,\ldots,x_n]$ of polynomials with $n$ variables in mind, M. Janet gave its algorithmic construction in this latter ring.
He began by introducing some remarkable properties of monomial ideals. In particular, he recovered the Dickson's Lemma, \cite{Dickson13}, on the finiteness generation of monomial ideal. This property is essential for Noetherian properties on the set of monomials. Note that, M. Janet wasn't familiar with the axiomatisation of the algebraic structure of ideal and the property of Noetherianity already introduced by E. Noether in \cite{Noether1921} and \cite{Noether23}. Note also that the Dickson lemma was published in 1913 in a paper on numbers theory in an American journal. Due to the first world war, it would take long times before these works were accessible to french mathematical community. The Janet's algebraic constructions given in his monograph will be recalled in Section~\ref{Subsection:JanetWork} for monomial systems and in Section~\ref{Section:PolynomialPartialDifferentialEquationsSystems} for polynomial systems.

\subsubsection*{Janet's multiplicative variables}
The computation on monomial and polynomial ideals performed by M. Janet are founded on the notion of \emph{multiplicative variable} that he introduced in his thesis, \cite{Janet20}. Given an ideal generated by a set of monomials, he distinguished the family of monomials contained in the ideal and those contained in the complement of the ideal. The notion of multiplicative and non-multiplicative variables appear in order to stratify these two families of monomials. We will recall this notion of multiplicativity on variables in~\ref{Subsection:JanetMultiplicativeVariables}. This leads to a refinement of the classical division on monomials, nowadays called \emph{Janet's division}.

\subsubsection*{Involutive division and Janet's completion procedure}
The notion of multiplicative variable is local in the sense that it is defined with respect to a subset $\Ur$ of the set of all monomials. A monomial $u$ in $\Ur$ is said to be a Janet divisor of a monomial $w$ with respect to~$\Ur$, if $w=uv$ and all variables occurring in $v$ are multiplicative with respect to $\Ur$. In this way, we distinguish the set $\cone_\div{J}(\Ur)$ of monomials having a Janet divisor in~$\Ur$, called \emph{multiplicative} or \emph{involutive cone} of $\Ur$, to the set $\cone(\Ur)$ of multiple of monomials in $\Ur$ for the classical division. The Janet division being a refinement of the classical division, the set $\cone_\div{J}(\Ur)$ is a subset of $\cone(\Ur)$. M. Janet called a set of monomials $\Ur$ \emph{complete} when this inclusion is an equality.

For a monomial PDE system $(\Sigma)$ of the form 
\[
\frac{\partial^{\alpha_1+\ldots+\alpha_n}\quad}{\partial x_1^{\alpha_1}\ldots \partial x_n^{\alpha_n}} \varphi = f_{\alpha}(x_1,x_2,\ldots, x_n),
\]
where $(\alpha_1,\ldots,\alpha_n)$ belongs to a subset $I$ of $\Nb^n$, M. Janet associated the set of monomials \linebreak $\lm(\Sigma) =\{x_1^{\alpha_1}\ldots x_n^{\alpha_n}\;\;|\;\;(\alpha_1,\ldots,\alpha_n)\in I\}$. The compatibility conditions of the system $(\Sigma)$ corresponds to the factorizations of the monomials $ux$ in $\cone_\div{J}(\lm(\Sigma))$, where $u$ is in $\lm(\Sigma)$ and $x$ is a non-multiplicative variable of $u$ with respect to $\lm(\Sigma)$, in the sense given in~\ref{Subsubsection:CalculInverseDerivation}. By definition, for any monomial $u$ in $\lm(\Sigma)$ and $x$ non-multiplicative variable of $u$ with respect to $\lm(\Sigma)$, the monomial $ux$ admits such a factorization if and only if $\lm(\Sigma)$ is complete, see Proposition~\ref{Proposition:Completeness}.

The main procedure presented in Janet's monograph \cite{Janet29} completes in  finite number of operations a finite set of monomials $\Ur$ into a complete set of monomials $\widetilde{\Ur}$ that contains $\Ur$. This procedure consists in analyzing all the local default of completeness, by adding all the monomials $ux$ where $u$ belongs to $\Ur$ and $x$ is a non-multiplicative variable for $u$ with respect to $\Ur$. This procedure will be recalled in~\ref{Subsubsection:CompletionProcedure}. A generalization of this procedure to any involutive division was given by V. P. Gerdt in~\cite{Gerdt97}, and recalled in~\ref{Procedure:CompletionInvolutive}. 

Extending this procedure to a set of polynomials, M. Janet applied it to linear PDE systems, giving a procedure that transforms a linear PDE system into a complete PDE system having the same set of solutions. This construction is presented in Section~\ref{Subsection:ReductionPDEsystemToCanonicalForm}. In Section~\ref{Section:PolynomialInvolutiveBases}, we present such a procedure for an arbitrary involutive division given by V. P. Gerdt and Y. A. Blinkov in~\cite{GerdtBlinkovYuri98} and its relation to the Buchberger completion procedure in commutative polynomial rings, \cite{Buchberger65}.

\subsubsection*{The space of initial conditions}
In order to stratify the complement of the involutive cone $\cone_\div{J}(\Ur)$ M. Janet introduced the notion of \emph{complementary monomial}, see~\ref{Subsubsection:ComplementaryMonomials}, as the monomials that generate this complement in a such a way that the involutive cone of $\Ur$ and the involutive cone of the set $\comp{\Ur}$ of complementary monomials form a partition of the set of all monomials, see Proposition~\ref{Proposition:PartitionMr}.

For each complementary monomial $v$ in $\comp{\lm(\Sigma)}$ each analytic function in the multiplicative variables of~$v$ with respect to $\comp{\lm(\Sigma)}$ provides an initial condition of the PDE system $(\Sigma)$ as stated by Theorem~\ref{Theorem:BoundaryConditions}.

\subsubsection*{Polynomial partial differential equations systems}
In Section~\ref{Section:PolynomialPartialDifferentialEquationsSystems}, we present the analysis on polynomial PDE systems as M. Janet described in his monograph,~\cite{Janet29}. To deal with polynomials he defined some total orders on the set of derivatives, corresponding to total orders on the set of monomials. We recall them in Section~\ref{Subsection:ParametricPrincipalDerivative}.  The definitions on monomial orders given by M. Janet clarified the same notion previously introduced by C. Riquier in~\cite{Riquier93}. In particular, he made more explicit the notion of parametric and principal derivatives in order to distinguish the leading derivative in a polynomial PDE. In this way, he extended its algorithms on monomial PDE systems to the case of polynomial PDE systems. In particular, using these notions, he defined the property for a polynomial PDE system to be complete. Namely, a polynomial PDE system is complete if the associated set of monomials corresponding to leading derivatives of the system is complete. Moreover, he extended also the notion of complementary monomials to define the notion of \emph{initial condition} for a polynomial PDE system as in the monomial case. 

\subsubsection*{Initial conditions}
In this way, the notion of completeness is a suitable framework to discuss the existence and the unicity of the initial conditions for a linear PDE system. M. Janet proved that if a linear polynomial PDE system of the form 
\[
D_i\varphi=\sum_{j}a_{i,j}D_{i,j}\varphi, \quad i \in I,
\]
of one unknown function~$\varphi$ and all the functions $a_{i,j}$ are supposed to be analytic in a neighborhood of a point $P$  in $\mathbb{C}^n$ and is complete with respect to some a total order, then it admits at most one analytic solution satisfying the initial condition formulated in terms of complementary monomials,  see Theorems~\ref{thm_exist-EDP1} and \ref{thm_exist-EDP2}.

\subsubsection*{Integrability conditions} 
A linear polynomial PDE system of the above form is said to be \emph{completely integrable} if it admits an analytic solution for any given initial condition. M. Janet gave an algebraic characterization of complete integrability by introducing integrability conditions formulated in terms of factorization of leading derivative of the PDE by non-multiplicative variables. These integrability conditions are given explicitly in~\ref{Subsubsection:IntegrabilityConditions} as generalization to the polynomial situation of the integrability conditions formulated above for monomial PDE systems in Subsection~\ref{SS:InverseDerivation}. M. Janet proved that a linear polynomial PDE system is completely integrable if and only if any integrability condition is trivial, as stated in Theorem~\ref{Theorem:CaracterizationCompleteIntegrability}.

\subsubsection*{Janet's procedure of reduction of linear PDE systems to a canonical form}
In order to extend algorithmically Cauchy-Kowalevsky's theorem on the existence and uniqueness of solutions of initial condition problems as presented in~\ref{SSS:CauchyKowalevskyTheorem}, M. Janet considered normal forms of linear PDE systems with respect to a suitable total order on derivatives, satisfying some analytic conditions on coefficients and a complete integrability condition on the system, as defined in~\ref{Subsubsection:CanonicalSystems}. Such normal forms of PDE systems are called \emph{canonical} by M. Janet.

Procedure~\ref{Subsubsection:JanetCompletionProcedure} is the \emph{Janet's procedure} that decides if a linear PDE system can be transformed into a completely integrable system. If the system cannot be reduced to a canonical form, the procedure returns the obstructions of the system to be transformed into a completely integrable system.
This procedure depends on a total order on derivatives of unknown functions of the PDE system. 
For this purpose, M.~Janet introduced a general method to define a total order on derivatives using a parametrization of a weight order on variables and unknown functions, as recalled in \ref{sect-gen}. The Janet procedure uses a specific weight order called canonical and defined in~\ref{Subsubsection:CanonicalWeightOrder}.

The first step of Janet's procedure consists in applying \emph{autoreduction procedure}, defined in~\ref{SSS:ProcedureAutoreduce}, in order to reduce any PDE of the system with respect to the total order on derivatives. Namely two PDE of the system cannot have the same leading derivative, and any PDE of the system is reduced with respect to the leading derivatives of the others PDE, as specified in Procedure~\ref{A:RightReduce}.

The second step consists in applying the \emph{completion procedure}, Procedure~\ref{Procedure:CompletePDESystem}. That is, the set of leading derivatives of the system defines a complete set of monomials in the sense given in~\ref{CompletenessWithJanetOrdering}.

Having transformed the PDE system to an autoreduced and complete system, one can discuss about its integrability conditions.  M. Janet shown that this set of integrability conditions is  a finite set of relations that does not contain principal derivative, as explained in~\ref{Subsubsection:IntegrabilityConditions}. Hence, these integrability conditions are $\div{J}$-normal forms and uniquely defined. By Theorem~\ref{Theorem:CaracterizationCompleteIntegrability}, if all of these normal forms are trivial, then the system is completely integrable. Otherwise, if there is a non-trivial condition in the set of integrability conditions that contains only unknown functions and variables, then this relation imposes a relation on the initial conditions of the system, else if there is no such relation, the procedure is applied again on the PDE system completed by all the integrability conditions.
Note that this procedure depends on the Janet division and on a total order on the set of derivatives.

By this algorithmic method, M. Janet has generalized in certain cases Cauchy-Kowalevsky's theorem at the time where the algebraic structures have not been introduced to compute with polynomial ideals. This is pioneering work in the field of formal approaches to analysis on PDE systems. Algorithmic methods to deals with polynomial ideals were developed throughout the twentieth century and extended to wide range of algebraic structures. In the next subsection, we present some milestones on these formal mathematics.

\subsection*{Constructive methods and rewriting in algebra through the twentieth century}
\label{Section:ConstructiveMethodsRewritingTheory}

The constructions developed by M. Janet in his formal theory of linear partial differential equation systems are based on the structure of ideal, that he called \emph{module of forms}. This notion corresponds to those introduced previously by D. Hilbert in \cite{Hilbert1890} with the terminology of \emph{algebraic form}. Notice that N. M. Gunther dealt with such a structure in \cite{Gunther13b}.
The axiomatization of the notion of ideal on an arbitrary ring were given by E. Noether in \cite{Noether1921}.
As we will explain in this note, M. Janet introduced algorithmic methods to compute a family of generators of an ideal having the involutive property and called \emph{involutive bases}. This property is used to obtain a normal form of linear partial differential equation systems.

Janet's computation of involutive bases is based on a refinement of classical polynomial division called \emph{involutive division}. He defined a division that was suitable for reduction of linear partial differential equation systems. Thereafter, other involutive divisions were studied in particular by J. M. Thomas \cite{Thomas37} and by J.-F. Pommaret \cite{Pommaret78}, we refer to Section \ref{SS:OtherInvolutiveApproaches} for a discussion on these divisions.

The main purpose is to complete a generating family of an ideal into an involutive bases with respect to a given involutive division. This completion process is quite similar to those introduced with the classical division in Gr\"{o}bner bases theory. In fact, involutive bases appears to be particular cases of Gr\"{o}bner bases. The principle of completion had been developed independently in rewriting theory, that proposes a combinatorial approach of equivalence relation motivated by several computational and decision problems in algebra, computer science and logic.

\subsubsection*{Some milestones on algebraic rewriting and constructive algebra}
The main results in the work of M. Janet rely on constructive methods in linear algebra using  the principle of computing normal forms by rewriting and the principle of completion of a generating set of an ideal. These two principles have been developed during all of the twentieth century in many algebraic contexts with different formulations and at several occasions. We review below some important milestones in this long and wealth history from Kronecker to the more recent developments.
\begin{description}
\item[1882.] L. Kronecker introduced the notion of resultant of polynomials in \cite{Kronecker92} and gave the first result in elimination theory using this notion.
\item[1886.] K. Weierstrass proved a fundamental result called \emph{preparation theorem} on the factorization of analytic functions by polynomials. As an application he showed a division theorem for rings of convergent series, \cite{Weierstrass1886}.
\item[1890.] D. Hilbert proved that any ideals of a ring of commutative polynomials on a finite set of variables over a field and ring of integers are finitely generated, \cite{Hilbert1890}. This is the first formulation of the Hilbert basis theorem stating that a polynomial ring over a Noetherian ring is Noetherian.
\item[1913.] In a paper on number theory, L.E. Dickson proved a monomial version of the Hilbert basis theorem by a combinatorial method, {\cite[Lemma A]{Dickson13}}.
\item[1913.] In a serie of forgotten papers, N. G\"unther develop algorithmic approaches for polynomials rings, \cite{Gunther13a,Gunther13b,Gunther13c}.  A review of the G\"unther theory can be found in \cite{Gunther41}.
\item[1914.] M. Dehn described the word problem for finitely presented groups, \cite{Dehn10}. Using systems of transformations rules, A. Thue studied the problem for finitely presented semigroups,~\cite{Thue14}. It was only much later in 1947, that the problem for finitely presented monoids was shown to be undecidable, independently by E. L. Post~\cite{Post47} and A. Markov~\cite{Markov47a,Markov47b}.
\item[1916.] F. S. Macaulay was one of the pioneers in commutative algebra. In his book \emph{The algebraic theory of modular systems}, \cite{Macaulay16}, following the fundamental Hilbert basis theorem, he initiated an algorithmic approach to treat generators of polynomial ideals. In particular, he introduced the notion of \emph{$H$-basis} corresponding to a monomial version of Gr\"{o}bner bases.
\item[1920.] M. Janet defended his doctoral thesis, \cite{Janet20}, that presents a formal study of systems of partial differential equations following works of Ch. Riquier and \'E. Delassus. In particular, he analyzed completly integrable systems and  Hilbert functions on polynomial ideals. 
\item[1921.] In her seminal paper, \emph{Idealtheorie in Ringbereichen}, \cite{Noether1921}, E. Noether gave the foundation of general commutative ring theory, and gave one of the first general definitions of a commutative ring. She also formulated the theorem of finite chains {\cite[Satz I, \emph{Satz von der endlichen Kette}]{Noether1921}}.
\item[1923.] E. Noether stated in \cite{Noether23,Noether24} concepts of elimination theory in the language of ideals that she had introduced in \cite{Noether1921}.
\item[1926.] G. Hermann, a student of E. Noether \cite{Hermann26}, initiated purely algorithmic approaches on ideals, such as ideal membership problem and primary decomposition ideals. This work appears as a fundamental contribution for emergence of computer algebra.
\item[1927.] F. S. Macaulay showed in \cite{Macaulay27} that the Hilbert function of a polynomial ideal $I$ is equal to the Hilbert function of the monomial ideal generated by the set of leading monomials of polynomials in $I$ with respect a monomial order. As a consequence the coefficients of the Hilbert function of a polynomial ideal are polynomial for sufficiently big degree.
\item[1937.] Based on early works by Ch. Riquier and M. Janet, in \cite{Thomas37} J. M. Thomas reformulated in the algebraic language of B. L. van der Waerden, \emph{Moderne Algebra}, \cite{Waerden30, Waerden31}, the theory of normal forms of systems of partial differential equations.
\item[1937.] In \cite{Grobner37}, W. Gr\"obner formulated the isomorphism between the ring of polynomials with coefficients in an arbitrary field and the ring of differential operators with constant coefficients, see Proposition~\ref{Proposition:IsomorphismPartialX}. The identification of these two rings was used before in the algebraic study of systems of partial differential equations but without being explicit.
\item[1942.] In a seminal paper on rewriting theory, M. Newman presented rewriting as a combinatorial approach to study equivalence relations, \cite{Newman42}. He proved a fundamental rewriting result stating that under termination hypothesis, the confluence properties is equivalent to local confluence.
\item[1949.] In its monograph \emph{Moderne algebraische {G}eometrie. {D}ie idealtheoretischen {G}rundlagen}, \cite{Grobner49},  W. Gr\"{o}bner surveyed algebraic computation on ideal theory with applications to algebraic geometry.
\item[1962.] A. Shirshov introduced in \cite{Shirshov62} an algorithmic method to compute normal forms in a free Lie algebra with respect to a family of elements of the Lie algebra satisfying a confluence property, the method is based on a completion procedure and he proved a version of Newman's lemma for Lie alegbras, called \emph{composition lemma}. He deduced a constructive proof of the Poincaré-Birkhoff-Witt theorem.
\item[1964.] H. Hironaka introduced in \cite{Hironaka64} a division algorithm and introduced the notion of \emph{standard basis}, that is analogous to the notion of Gr\"{o}bner basis, for power series rings in order to solve problems of resolution of singularities in algebraic geometry.
\item[1965.] Under the supervision of W. Gr\"{o}bner, B. Buchberger developed in his PhD thesis the algorithmic theory of Gr\"{o}bner bases for commutative polynomial algebras, \cite{Buchberger65,Buchberger70,Buchberger06}.
Buchberger gave a characterization of Gr\"{o}bner bases in terms of \emph{$S$-polynomials}  and an algorithm to compute such bases, with a complete implementation in the assembler language of the computer ZUSE Z 23 V.
\item[1967.] D. Knuth and P. Bendix defined in \cite{KnuthBendix70} a completion procedure that complete with respect to a termination a set of equations in an algebraic theory into a confluent term rewriting system. The procedure is similar to the Buchberger's completion procedure. We refer the reader to  \cite{Buchberger87} for an historical account on critical-pair/completion procedures. 
\item[1972.] H. Grauert introduced in \cite{Grauert72} a generalization of Weierstrass's preparation division theorem in the language of Banach algebras.
\item[1973.] M. Nivat formulated a critical pair lemma for string rewriting systems and proved that for a terminating rewriting system, the local confluence is decidable, \cite{Nivat73}.
\item[1976, 1978.] L. Bokut in \cite{Bokut76} and G. Bergman in \cite{Bergman78} extended Gr\"obner bases and Buchberger algorithm to associative algebras. They obtained the confluence Newman's Lemma for rewriting systems in free associative algebras compatible with a monomial order, called respectively Diamond Lemma for ring theory and composition Lemma.
\item[1978.] J.-F. Pommaret introduced in \cite{Pommaret78} a global involutive division simpler than those introduced by M. Janet.
\item[1980.] F.-O. Schreyer in his PhD thesis \cite{Schreyer80} gave a method that computes syzygies in commutative multivariate polynomial rings using the division algorithm, see {\cite[Theorem 15.10]{Eisenbud95}}.
\item[1980.] G. Huet gave in~\cite{Huet80} a proof of Newman's lemma using a Noetherian well-founded induction method.
 \item[1985.] Gr\"{o}bner basis theory was extended to Weyl algebras by A. Galligo in~\cite{Galligo85}.
  \item[1997.] V. P. Gerdt and Y. A. Blinkov introduced in \cite{Gerdt97, GerdtBlinkovYuri98} the notion of involutive monomial division and its axiomatization.
  \item[2005.] V. P. Gerdt in \cite{Gerdt05} presented and analyzed an efficient involutive algorithm for computing Gröbner bases.
 \item[1999, 2002.] J.-C. Faugère developed efficient algorithms for computing Gr\"{o}bner bases, algorithm F4, \cite{Faugere99} then and algorithm F5, \cite{Faugere02}. 
 \item[2012.] T. Bächler, V. P. Gerdt, M. Lange-Hegermann and D. Robertz algorithmized in \cite{BachlerGerdtLange-HegermannRobertz12} the Thomas decomposition of algebraic and differential systems.
\end{description}

\subsection*{Conventions and notations}
The set of non-negative integers is denoted by $\mathbb{N}$.
In this note, $\K[x_1,\ldots,x_n]$ denotes the polynomial ring on the variables $x_1,\ldots,x_n$ over a field $\K$ of characteristic zero. 
For a subset $G$ of polynomials of~$\K[x_1,\ldots,x_n]$, we will denote by $\ideal{G}$ the ideal of~$\K[x_1,\ldots,x_n]$ generated by $G$. A polynomial is either zero or it can be written as a sum of a finite number of non-zero \emph{terms}, each term being the product of a scalar in $\K$ by a \emph{monomial}.

\subsubsection*{Monomials}
We will denote by $\Mr(x_1,\ldots,x_n)$ the set of monomials in the ring $\K[x_1,\ldots,x_n]$. For a subset $I$ of $\{x_1,\ldots,x_n\}$ we will denote by $\Mr(I)$ the set of monomials in $\Mr(x_1,\ldots,x_n)$ whose variables lie in $I$. A monomial $u$ in $\Mr(x_1,\ldots,x_n)$ is written as $u=x_1^{\alpha_1}\ldots x_n^{\alpha_n}$, were the $\alpha_i$ are non-negative integers. The integer $\alpha_i$ is called the \emph{degree} of the variable $x_i$ in $u$, it will be also denoted by $\deg_i(u)$. For $\alpha=(\alpha_1,\ldots,\alpha_n)$ in $\mathbb{N}^n$, we denote $x^\alpha=x_1^{\alpha_1}\ldots x_n^{\alpha_n}$ and $|\alpha|=\alpha_1+\ldots+\alpha_n$.

For a finite set $\Ur$ of monomials of $\Mr(x_1,\ldots,x_n)$ and $1\leq i\leq n$, we denote by $\deg_i(\Ur)$ the largest possible degree in variable $x_i$ of the monomials in $\Ur$, that is
\[
\deg_i(\Ur) =
\max\big(\deg_i(u)\;|\;u\in \Ur\,\big).
\]
We call the \emph{cone} of a set $\Ur$ of monomials of $\Mr(x_1,\ldots,x_n)$ the set of all multiple of monomials in $\Ur$ defined by
\[
\cone(\Ur) = \bigcup_{u\in \Ur} u\Mr(x_1,\ldots,x_n) =
\{\, uv \;|\; u \in \Ur,\; v \in \Mr(x_1,\ldots, x_n) \,\}.
\]

\subsubsection*{Homogeneous polynomials}
An \emph{homogenous polynomial} of $\K[x_1,\ldots,x_n]$ is a polynomial all of whose non-zero terms have the same degree. An homogenous polynomial is of \emph{degree} $p$ all of whose non-zero terms have degree $p$. We will denote by $\K[x_1,\ldots, x_n]_p$ the space of homogenous polynomials of degree $p$. The dimension of this space is given by the following formula:
\[
\Gamma_n^p:=\dim \big(\,\K[x_1,\ldots, x_n]_p\,\big)=\frac{(p+1)(p+2)\ldots (p+n-1)}{1\cdot 2\cdot \ldots \cdot(n-1)}. 
\]

\subsubsection*{Monomial order}
Recall that a \emph{monomial order} on $\Mr(x_1,\ldots,x_n)$ is a relation $\preccurlyeq$ on $\Mr(x_1,\ldots,x_n)$ satisfying the following three conditions
\begin{enumerate}[{\bf i)}]
\item $\preccurlyeq$ is a total order on $\Mr(x_1,\ldots,x_n)$,
\item $\preccurlyeq$ is compatible with multiplication, that is, if $u\preccurlyeq u'$, then $uw\preccurlyeq u'w$ for any monomials $u,u',w$ in $\Mr(x_1,\ldots,x_n)$,
\item $\preccurlyeq$ is a well-order on $\Mr(x_1,\ldots,x_n)$, that is, every nonempty subset of $\Mr(x_1,\ldots,x_n)$ has a smallest element with respect to $\preccurlyeq$.
\end{enumerate} 

The \emph{leading term}, \emph{leading monomial} and \emph{leading coefficient} of a polynomial $f$ of $\K[x_1,\ldots,x_n]$, with respect to a monomial order $\preccurlyeq$, will be denoted respectively by $\lt_\preccurlyeq(f)$, $\lm_\preccurlyeq(f)$ and $\lc_\preccurlyeq(f)$. For a set $F$ of polynomials in $\K[x_1,\ldots,x_n]$, we will denote by $\lm_\preccurlyeq(F)$ the set of leading monomials of the polynomials in $F$. For simplicity, we will use notations $\lt(f)$, $\lm(f)$, $\lc(f)$ and $\lm(F)$ if there is no possible confusion.

\section{Exterior differential systems}
\label{Section:ExteriorDifferentialSystems}

Motivated by problems in analytical mechanics, L. Euler (1707 - 1783) and J.-L. Lagrange (1736 - 1813) initiated the so-called \emph{variational calculus}, cf. \cite{Lagrange88}, which led to the problem of solving partial differential equations, PDE for short. In this section, we briefly explain the evolutions of these theory to serve as a guide to the M. Janet contributions. We present the historical background of exterior differential systems and of the questions on PDE. For a deeper discussion of the theory of differential equations and the Pfaff problem, we refer the reader to \cite{Forsyth90, Weber00} or \cite{Cartan1899}.

\subsection{Pfaff's problem}
\label{SS:PfaffProblem}

\subsubsection{Partial differential equations for one unknown function}

In 1772, J.-L. Lagrange \cite{Lagrange72} considered a PDE of the following form 
\begin{equation}
\label{Equation:Lagrange0}
F(x,y,\varphi,p,q)=0
\quad\text{with}\quad 
p=\frac{\partial \varphi}{\partial x}
\quad\text{and}\quad
q=\frac{\partial \varphi}{\partial y},
\end{equation}
i.e., a PDE of one unknown function $\varphi$ of two variables $x$ and $y$.  Lagrange's method to solve this PDE can be summarized as follows.
\begin{enumerate}[{\bf i)}]
\item Express the PDE (\ref{Equation:Lagrange0}) in the form 
\begin{equation}
\label{Equation:Lagrange}
q=F_1(x,y,\varphi,p)
\quad\text{with}\quad
p=\frac{\partial \varphi}{\partial x}
\quad\text{and}\quad q=\frac{\partial \varphi}{\partial y}.
\end{equation}
\item `Temporally, forget the fact $p=\frac{\partial \varphi}{\partial x}$' and consider the following $1$-form
\[ \Omega=d\varphi-pdx-qdy=d\varphi-pdx-F_1(x,y,\varphi,p)dy,
\]
by regarding $p$ as some (not yet fixed) function of $x,y$ and $\varphi$.
\item If there exist functions $M$ and $\Phi$ of $x,y$ and $\varphi$ satisfying $M\Omega=d\Phi$, then $\Phi(x,y,\varphi)=C$ for some constant $C$. Solving this new equation, we obtain a solution $\varphi=\psi(x,y,C)$ to the equation (\ref{Equation:Lagrange}).
\end{enumerate}

\subsubsection{Pfaffian systems}

In 1814-15, J. F. Pfaff (1765 - 1825) \cite{Pfaff15} has treated a PDE for one unknown function of $n$ variables, which was then succeeded to C. G. Jacobi (1804 - 1851) (cf. \cite{Jacobi27}).
Recall that any PDE of any order is equivalent to a system of PDE of first order. Thus we may only think of system of PDE of first order with $m$ unknown function
\[
F_k\big( x_1, \ldots, x_n, \varphi^1, \ldots, \varphi^m, \frac{\partial \varphi^a}{\partial x_i}~(1\leq a \leq m, 1\leq i\leq n)\big)=0,
\quad\text{for}\quad 
1\leq k \leq r.
\]
Introducing the new variables $p^{a}_i$, the system is defined on the space with coordinates $(x_i, \varphi^{a},p_i^{a})$ and is given by
\[ \begin{cases} F_k(x_i, \varphi^{a},p_i^{a})=0, & \\
                         d\varphi^{a}- \displaystyle{\sum_{i=1}^n} p_i^{a}dx_i=0, & \\
                         dx_1 \wedge \ldots \wedge dx_n \neq 0. & \end{cases}
\]
Noticed that the last condition means that the variables $x_1,\ldots , x_n$ are independent. Such a system is called a \emph{Pfaffian system}. One is interested in the questions, whether this system admits a solution or not, and if there exists a solution whether it is unique under some conditions. 
These questions are \emph{Pfaff's problems}.

\subsubsection{Cauchy-Kowalevsky's theorem}
\label{SSS:CauchyKowalevskyTheorem}
A naive approach to Pfaff's problems, having applications to mechanics in mind, is the question of the initial conditions.  In series of articles published in 1842, A. Cauchy (1789 - 1857) studied the system of PDE of first order in the following form: 
\[ 
\frac{\partial \varphi^{a}}{\partial t}=f_a( t, x_1,\cdots, x_n)
+\sum_{b=1}^{m}\sum_{i=1}^{n} f_{a,b}^{i}( t, x_1,\ldots, x_n) \frac{\partial \varphi^{b}}{\partial x_i},
\quad\text{for}\quad 1\leq a\leq m, 
\]
where $f_a, f_{a,b}^i$ and $\varphi^1, \ldots, \varphi^m$ are functions of $n+1$ variables   $t,x_1,\ldots,x_n$.
S. Kowalevsky (1850 - 1891) \cite{Kowalevsky75} in 1875 considered the system of PDE in the following form: for some $r_a \in \mathbb{Z}_{>0}$~($1\leq a\leq m$),
\[ 
\frac{\partial^{r_a} \varphi^{a}}{\partial t^{r_a}}=\sum_{b=1}^{m}
\sum_{\substack{j=0 \\ j+\vert \alpha\vert \leq r_a}}^{r_a-1} f_{a,b}^{j,\alpha}( t, x_1,\ldots, x_n) \frac{\partial^{j+\vert \alpha\vert} \varphi^{b}}{\partial t^j \partial x^{\alpha}},
\]
where, $f_{a,b}^{j,\alpha}$ and $\varphi^1, \ldots, \varphi^m$ are functions of $n+1$ variables $t,x_1,\ldots,x_n$, and where for \linebreak $\alpha=(\alpha_1,\cdots, \alpha_n)$ in $(\mathbb{Z}_{\geq 0})^n$, we set $\vert \alpha \vert=\sum_{i=1}^n \alpha_i$ and $\partial x^\alpha=\partial x_1^{\alpha_1} \ldots \partial x_n^{\alpha_n}$.
They showed that under the hypothesis on the analyticity of the coefficients, such a system admits a unique analytic local  solution satisfying a given initial condition, that is now called the \emph{Cauchy-Kowalevsky theorem}.

\subsubsection{Completely integrable systems}
\label{Subsubsection:InvolutionFrobenius}
A first geometric approach to this problem was taken over by G.~Frobenius (1849 - 1917) \cite{Frobenius77} and independently by G.~Darboux (1842 - 1917) \cite{Darboux82}.
Let~$X$ be a differentiable manifold of dimension $n$. We consider the Pfaffian system:
\[
\omega_i=0 
\qquad 1\leq i\leq r,
\]
where $\omega_i$'s are $1$-forms defined on a neighbourhood $V$ of a point $x$ in $X$. Suppose that the family 
\[
\{(\omega_i)_y\}_{1\leq i\leq r}\subset T_y^\ast X
\]
is linearly independent for $y$ in $V$. 
For $0\leq p \leq n$, let us denote by $\Omega_X^p(V)$ the space of differentiable $p$-forms on $V$. A \emph{$p$-dimensional distribution} $\mathcal{D}$ on $X$ is a 
subbundle of $TX$ whose fibre is of dimension $p$.
A distribution $\mathcal{D}$ is \emph{involutive} if, for any vector field $\xi$ and $\eta$ taking values in $\mathcal{D}$, the Lie bracket 
\[
[\xi, , \eta]:=\xi \eta-\eta \xi 
\]
takes values in $\mathcal{D}$ as well.
Such a Pfaffian system is called \emph{completely integrable}.

G. Frobenius and G. Darboux showed that the ideal $I$ of $\bigoplus_{p=0}^n \Omega_X^p(V)$, generated by the \linebreak $1$-forms~$\omega_1,\ldots,\omega_r$ is a differential ideal, i.e. $dI \subset I$, if and only if the distribution $\mathcal{D}$ on $V$ defined as the annihilator of $\omega_1,\ldots,\omega_r$ is involutive. 

\subsection{The Cartan-K\"{a}hler theory}
\label{SS:CartanKahlerTheory}

Here, we give a brief exposition of the so-called Cartan-K\"{a}hler theory from view point of its history. In particular, we will present the notion of systems in involution.
For the expositions by the founders of the theory, we refer the reader to 
\cite{Cartan45} and \cite{Kahler34}, for a modern introduction by leading experts, we refer to~\cite{BC3G91} and \cite{Malgrange05}.

\subsubsection{Differential forms}
H. Grassmann (1809 - 1877), \cite{Grassmann44}, introduced in 1844 the first equational formulation of the structure of exterior algebra with the anti-commutativity rules, 
\[
x\wedge y = - y \wedge x. 
\]
Using this notion, \'E. Cartan (1869 - 1951), \cite{Cartan1899} defined in 1899 the \emph{exterior differential} and \emph{differential $p$-form}. He showed that these notions are invariant with respect to any coordinate transformation. Thanks to this differential structures, several results obtained in 19th century were reformulated in a clear manner. 

\subsubsection{Exterior differential systems}
An \emph{exterior differential system} $\Sigma$ is a finite set of homogeneous differential forms, i.e. $\Sigma \subset \bigcup_p \Omega_X^p$. \'E. Cartan, \cite{Cartan01}, in 1901 studied exterior differential systems generated by $1$-forms, i.e. Pfaffian systems. Later, E. K\"{a}hler (1906 - 2000) \cite{Kahler34} generalized the Cartan theory to any differential ideal $I$ generated by an exterior differential system. By this reason, the general theory on exterior differential systems is nowadays called the \emph{Cartan-K\"{a}hler theory}.

In the rest of this subsection, we briefly describe the existence theorem for such a system. Since the argument developed here is \textit{local} and 
we need the Cauchy-Kowalevsky theorem, we assume that every function is  \textit{analytic} in $x_1, \ldots, x_n$ unless otherwise stated.

\subsubsection{Integral elements}
Let $\Sigma$ be an exterior differential system on a real analytic manifold $X$ of dimension $n$ such that the ideal generated by $\Sigma$ is an differential ideal. For $0\leq p \leq n$, set $\Sigma^p=\Sigma \cap \Omega_X^p$. We fix $x$ in $X$. For $p>0$, the pair $(E_p,x)$, for a $p$-dimensional vector subspace $E_p\subset T_xX$ is called an \emph{integral $p$-element} if $\omega\vert_{E_p}=0$ for any $\omega$ in $\Sigma^p_x:=\Sigma^p \cap \Omega_{X,x}^p$, where $\Omega_{X,x}^p$ denotes the space of differentila $p$-forms defined on a neighbourhood of $x \in X$. We denote the set of integral elements of dimension $p$ by $I\Sigma^p_x$.

An \emph{integral manifold} $Y$ is a submanifold of $X$ whose tangent space $T_{y}Y$ at any point $y$ in $Y$ is an integral element. 
Since the exterior differential system defined by $\Sigma$ is completely integrable, there exists independent $r$-functions $\varphi_1(x), \cdots, \varphi_r(x)$, called \emph{integral of motion} or \emph{first integral}, defined on a neighbourhood $V$ of a point $x \in \Ur$ such that their restrictions on $V\cap Y$ are constants.

The \emph{polar space} $H(E_p)$ of an integral element $E_p$ of $\Sigma$ at origin $x$ is the vector subspace of $T_xX$ generated by those $\xi \in T_xX$ such that $E_p+\Rb \xi$ is an integral element of $\Sigma$. 

\subsubsection{Regular integral elements}
Let $E_0$ be the real analytic subvariety of $X$ defined as the zeros of~$\Sigma^0$ and let $\Ur$ the subset of smooth points. A point in $E_0$ is called \emph{integral point}. 
A tangent vector $\xi$ in~$T_xX$ is called \emph{linear integral element} if $\omega(\xi)=0$ for any $\omega \in \Sigma_x^1$ with $x \in \Ur$.
We define inductively the properties called "regular" and "ordinary" as follows:
\begin{enumerate}
\item
The $0$th order \emph{character} is the integer $s_0=\max_{x \in \Ur} \{\dim \Rb\Sigma_x^1\}$. A point $x \in E_0$ is said to be \emph{regular} if $\dim \Rb\Sigma_x^1=s_0$, and a linear integral element $\xi \in T_xX$ is called \emph{ordinary} if  $x$ is regular. 
\item Set $E_1=\Rb \xi$, where $\xi$ is an ordinary linear integral element. The $1$st order \emph{character} is the integer~$s_1$ satisfying $s_0+s_1=\max_{x \in \Ur} \{\dim H(E_1)\}$. The ordinary integral $1$-element $(E_1,x)$ is said to be \emph{regular} if $\dim H(E_1)=s_0+s_1$.
Any integral $2$-element $(E_2,x)$ is called \emph{ordinary} if it contains at least one regular linear integral element.
\item Assume that all these are defined up to $(p-1)$th step and that $s_0+s_1+\cdots+s_{p-1}<n-p+1$.

The $p$th order \emph{character} is the integer $s_p$ satisfying 
\[
\sum_{i=0}^p s_i=\max_{x \in \Ur} \,\{\dim H(E_p)\}. 
\]
An integral $p$-element $(E_p,x)$ is said to be \emph{regular} if 
\[
\sum_{i=0}^p s_i=\dim H(E_p).
\]
The integral $p$-element $(E_p,x)$ is said to be \emph{ordinary} if it contains at least one regular integral element $(E_{p-1},x)$.
\end{enumerate}
Let $h$ be the smallest positive integer such that $\sum_{i=0}^h s_i=n-h$. In such a case, there does not exist an integral $(h+1)$-element. The integer $h$ is called the \emph{genus} of the system $\Sigma$. In such a case, for $0<p\leq h$, one has
\[ \sum_{i=0}^{p-1} s_i \leq n-p. \]

\begin{theorem} 
Let $0<p\leq h$ be an integer. 
\begin{enumerate}
\item The case $\sum_{i=0}^{p-1} s_i=n-p:$ let  $(E_p,x)$ be an ordinary integral $p$-element and let $Y_{p-1}$ be an integral manifold of dimension $p-1$ such that $(T_xY_{p-1},x)$ is a regular integral $(p-1)$-element contained in $(E_p,x)$.  Then, there exists a unique integral manifold $Y_p$ of dimension $p$ containing $Y_{p-1}$ such that $T_xY_p=E_p$.
\item The case $\sum_{i=0}^{p-1} s_i<n-p:$ let  $(E_p,x)$ be an integral $p$-element and let $Y_{p-1}$ be an integral manifold of dimension $p-1$ such that $(T_xY_{p-1},x)$ is a regular integral $(p-1)$-element contained in $(E_p,x)$. Then, for each choice of $n-p-\sum_{i=0}^{p-1} s_i$ differentiable functions on $x_1,\cdots, x_p$, there exists a unique integral manifolds $Y_p$ of dimension $p$ containing $Y_{p-1}$ such that $T_xY_p=E_p$.
\end{enumerate}
\end{theorem}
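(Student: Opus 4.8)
The plan is to prove both cases simultaneously by induction on $p$, reducing the passage from a $(p-1)$-dimensional integral manifold to a $p$-dimensional one to a single Cauchy problem solved by the Cauchy-Kowalevsky theorem recalled in~\ref{SSS:CauchyKowalevskyTheorem}. The inductive hypothesis supplies $Y_{p-1}$ (the base case $p=1$ extends an integral point of $E_0$ along an ordinary linear integral element by solving ODEs, i.e. Cauchy-Kowalevsky in one variable). The two cases are treated uniformly: since $(T_xY_{p-1},x)$ is a regular integral $(p-1)$-element contained in $E_p$, the element $E_p$ is automatically ordinary, so the only distinction between (i) and (ii) is whether $\sum_{i=0}^{p-1}s_i$ equals or is strictly less than $n-p$, which controls the number of arbitrary functions appearing in the extension.

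First I would fix real-analytic coordinates $(x_1,\ldots,x_n)$ centred at $x$ and adapted to the flag, so that $Y_{p-1}$ is the slice $\{x_p=x_{p+1}=\cdots=x_n=0\}$ parametrised by $(x_1,\ldots,x_{p-1})$ and $E_p=\mathrm{span}(\partial/\partial x_1,\ldots,\partial/\partial x_p)$ at $x$. One then seeks $Y_p$ as a graph $x_\lambda=g_\lambda(x_1,\ldots,x_p)$ for $p+1\le\lambda\le n$, subject to the boundary condition $g_\lambda(x_1,\ldots,x_{p-1},0)=0$ forcing $Y_p\supset Y_{p-1}$, together with the first-derivative normalisation at the origin that encodes $T_xY_p=E_p$. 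Requiring that each generator of $\Sigma$ of degree $p$ pull back to zero on this graph produces a first-order system for the $g_\lambda$ in the evolution variable $x_p$. The structural input is the regularity of $(T_xY_{p-1},x)$: it forces the polar equations of $E_{p-1}$ to have locally constant rank near $x$, equal to $\sum_{i=0}^{p-1}s_i$ independent equations. Since $E_{p-1}\subset H(E_{p-1})$, these equations do not constrain the directions tangent to $Y_{p-1}$; after the adapted coordinate choice they determine exactly $\sum_{i=0}^{p-1}s_i$ of the derivatives $\partial g_\lambda/\partial x_p$ in Cauchy-Kowalevsky normal form, leaving the remaining $n-p-\sum_{i=0}^{p-1}s_i$ functions $g_\lambda$ free. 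In case (i) this number is zero and the evolution is completely determined; in case (ii) these are precisely the arbitrary functions of $x_1,\ldots,x_p$ to be prescribed.

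With the free functions fixed (vacuously in case (i)), the determined system is real-analytic and in Cauchy-Kowalevsky form, with analytic coefficients by the standing analyticity hypothesis and analytic Cauchy data read off from $Y_{p-1}$. The Cauchy-Kowalevsky theorem then yields a unique analytic solution $(g_\lambda)$ on a neighbourhood of the origin, hence a unique candidate submanifold $Y_p$ with $T_xY_p=E_p$; the uniqueness assertion of the theorem is inherited directly from the uniqueness clause of Cauchy-Kowalevsky.

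The main obstacle is the final step: showing that this candidate $Y_p$, constructed so as to annihilate only the degree-$p$ generators, is genuinely an integral manifold, that is, that every form of the differential ideal $I$ generated by $\Sigma$ pulls back to zero on $Y_p$. This is exactly where the hypothesis $dI\subset I$ is indispensable. I would treat the pullbacks to $Y_p$ of a generating set of $I$ as unknowns and show, using $dI\subset I$ together with the exterior derivative in the $x_p$-direction, that they satisfy a homogeneous linear first-order system of transport equations along the $x_p$-flow. The Cauchy data of this auxiliary system vanishes because $Y_{p-1}$ is an integral manifold and $E_p$ is an integral element at the origin, so the relevant pullbacks vanish on the slice $\{x_p=0\}$. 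Applying the uniqueness part of Cauchy-Kowalevsky to this homogeneous system forces the pullbacks to vanish identically, so $Y_p$ is an integral manifold. This closure-propagation argument, converting the closedness of the ideal into the vanishing of the unimposed exterior equations, is the technical heart of the theorem; once it is in place the coordinate bookkeeping and the character count are routine.
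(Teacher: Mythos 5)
Your proposal is correct in outline and follows exactly the route the paper itself indicates: the paper states this theorem (the Cartan--K\"{a}hler existence theorem) without a written proof, remarking only that $Y_p$ is obtained from $Y_{p-1}$ by applying the Cauchy--Kowalevsky theorem to the PDE system defined by $\Sigma^p$, with the arbitrary functions supplying the initial data. Your induction-plus-Cauchy--Kowalevsky scheme --- adapted graph coordinates, the character count $n-p-\sum_{i=0}^{p-1}s_i$ for the free functions, and the closure-propagation argument using $dI\subset I$ together with the uniqueness clause of Cauchy--Kowalevsky --- is precisely the classical argument of Cartan and K\"{a}hler that the paper is referencing.
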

This theorem states that a given chain of ordinary integral elements
\[ (E_0,x) \subset (E_1,x) \subset \cdots \subset (E_h,x), \qquad \dim E_p=p \quad (0\leq p\leq h) , 
\]
one can inductively find an integral manifold $Y_p$ of dimension $p$ such that $Y_0=\{x\}$,  $Y_{p-1} \subset Y_p$ and~$T_xY_p=E_p$. 
Notice that to obtain $Y_p$ from $Y_{p-1}$, one applies the Cauchy-Kowalevsky theorem to the system of PDE defined by $\Sigma^p$ and the choice of arbitrary differentiable functions in the above statement provide initial data consisting of

\subsubsection{Systems in involution}
\label{Subsubsection:InvolutionCartan}
In many applications, the exterior differential systems one considers admit $p$-independent variables $x_1,\ldots, x_p$. In such a case, we are only interested in the $p$-dimensional integral manifolds among which it imposes no additional relation between $x_1, \ldots, x_p$. In general, an exterior differential system $\Sigma$ for $n-p$ unknown functions and $p$ independent variables $x_1,\ldots, x_p$ is said to be \emph{in involution} if it satisfies the two following conditions
\begin{enumerate}
\item its genus is more than or equal to $p$,
\item the defining equations of the generic ordinary integral $p$-element introduce no linear relation among~$dx_1,\ldots, dx_p$.
\end{enumerate}

\subsubsection{Reduced characters}
Consider a family $\mathcal{F}$ of integral elements of dimensions $1,2,\cdots, p-1$ than can be included in an integral $p$-element at a generic integral point $x \in X$.
Take a local chart of with origin $x$. The \emph{reduced polar system} $H^{\mathrm{red}}(E_i)$ of an integral element $x$ is the polar system of the restriction of the exterior differential system $\Sigma$ to the submanifold 
\[
\{x_1=x_2=\cdots =x_p=0\}. 
\]
The integers $s_0',s_1',\cdots, s_{p-1}'$, called the \emph{reduced characters}, are defined in such a way that $s_0'+s_1'+\cdots +s_i'$ is the dimension of the reduced polar system $H^{\mathrm{red}}(E_i)$ at a generic integral element. For convenience, one sets $s_p'=n-p-(s_0'+s_1'+\cdots+s_{p-1}')$.  \\
Let $\Sigma$ be an exterior differential system of $n-p$ unknown functions of $p$ independent variables such that the ideal generated by $\Sigma$ is an differential ideal. \'E. Cartan showed that it is a \textit{system in involution} iff the most general integeral $p$-element in $\mathcal{F}$ depends upon $s_1'+2s_2'+\cdots+ps_p'$ independent parameters.

\subsubsection{Recent developments}
In 1957, M. Kuranishi (1924- ), \cite{Kuranishi57}, considered the problem of the prolongation of a given exterior differential system and treated the cases what \'E. Cartan called total. Here, M. Kuranishi as well as \'E. Cartan studied locally in analytic category. After an algebraic approach to the integrability due to V. Guillemin and S. Sternberg, \cite{Guillemin-Sternberg64}, in 1964, I. Singer and S. Sternberg, \cite{Singer-Sternberg65}, in 1965 studied some classes of infinite dimensional which is even applicable to $C^\infty$-category. In 1970's, with the aid of Jet bundles and the Spencer cohomology, J. F. Pommaret (cf. \cite{Pommaret78}) reworked on the formal integrable involutive differential systems which generalized works of M. Janet, in the language of sheaf theory. 
For other geometric aspects not using sheaf theory, see the books by 
P.~Griffiths~(1938-),~\cite{Griffiths83}, and R. Bryant  et al., \cite{BC3G91}.

\section{Monomial partial differential equations systems}
\label{Subsection:JanetWork}

In this section, we present the method introduced by M. Janet called \emph{inverse calculation of the derivation} in his monograph~\cite{Janet29}. In~{\cite[Chapter I]{Janet29}} M. Janet considered \emph{monomial PDE}, that is PDE of the form
\begin{equation}
\label{Equation:PDEform}
\frac{\partial^{\alpha_1+\alpha_2+\ldots +\alpha_n}\varphi}{\partial x_1^{\alpha_1}\partial x_2^{\alpha_2}\ldots \partial x_n^{\alpha_n}}
=
f_{\alpha_1\alpha_2\ldots \alpha_n}(x_1,x_2,\ldots, x_n),
\end{equation}
where $\varphi$ is an unknown function and the $f_{\alpha_1\alpha_2\ldots \alpha_n}$ are several variables analytic functions.
By an algebraic method he analyzed the solvability of such an equation, namely the existence and the uniqueness of an analytic function $u$ solution of the system. Notice that the analyticity condition guarantees the commutativity of partial differentials operators. This property is crucial for the constructions that he developed in the ring of commutative polynomials. Note that the first example of PDE that does not admit any solution was found by H. Lewy in the fifties in~\cite{Lewy57}.

\subsection{Ring of partial differential operators and multiplicative variables}

\subsubsection{Historical context}
\label{Subsection:HistoricalContext}

In the beginning of 1890's, following  collaboration with C. M\'{e}ray (1835-1911), C. Riquier (1853-1929) initiated his research on finding normal forms of systems of (infinitely many) PDE for finitely many unknown functions with finitely many independent variables (see~\cite{Riquier10} and \cite{Riquier28} for more details).

In 1894, A. Tresse \cite{Tresse94} showed that such systems can be always reduced to systems of finitely many PDE. This is the first result on Noeterianity of a module over a ring of differential operators.
Based on this result, \'{E}. Delassus (1868 - 19..)
formalized and simplified Riquier's theory. In these works, one already finds an algorithmic approach analysing ideals of the ring $\K[\frac{\partial}{\partial x_1}, \ldots, \frac{\partial}{\partial x_n}]$.

It was M. Janet (1888 - 1983), already in his thesis \cite{Janet20} published in 1920, who had realized that the latter ring is isomorphic to the ring of polynomials with $n$ variables $\K[x_1, \cdots, x_n]$ at the time where several abstract notions on rings introduced by E. Noether in Germany had not been known by M. Janet in France.
It was only in 1937 that W. Gr\"{o}bner (1899-1980) proved this isomorphism.

\begin{proposition}[{\cite[Sect. 2.]{Grobner37}}]
\label{Proposition:IsomorphismPartialX}
There exists a ring isomorphism 
\[
\Phi: \K[x_1,\ldots, x_n] \longrightarrow \K[\frac{\partial}{\partial x_1}, \ldots, \frac{\partial}{\partial x_n}],
\]
from the ring of polynomials with $n$ variables $x_1, \ldots, x_n$ with coefficients in an arbitrary field $\K$ to the ring of differential operators with constant coefficients.
\end{proposition}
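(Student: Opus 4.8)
The plan is to construct the map $\Phi$ explicitly on the polynomial generators and verify that it extends to a ring isomorphism. First I would define $\Phi$ on the variables by setting $\Phi(x_i) = \frac{\partial}{\partial x_i}$ for each $1 \leq i \leq n$, and declare $\Phi$ to act as the identity on the scalars in $\K$. Since $\K[x_1,\ldots,x_n]$ is the free commutative $\K$-algebra on the generators $x_1,\ldots,x_n$, any assignment of the $x_i$ to pairwise commuting elements of a target $\K$-algebra extends uniquely to a $\K$-algebra homomorphism. Thus the main structural point to check is that the differential operators $\frac{\partial}{\partial x_1},\ldots,\frac{\partial}{\partial x_n}$ commute pairwise, i.e. $\frac{\partial}{\partial x_i}\frac{\partial}{\partial x_j} = \frac{\partial}{\partial x_j}\frac{\partial}{\partial x_i}$; this is exactly Schwarz's theorem on the symmetry of mixed partials, which holds here because the coefficients are constants and the functions in play are analytic, as emphasised in the surrounding text. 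Granting commutativity, the universal property delivers a well-defined $\K$-algebra homomorphism $\Phi$, which on a monomial acts by
\[
\Phi(x_1^{\alpha_1}\cdots x_n^{\alpha_n}) = \frac{\partial^{\alpha_1+\cdots+\alpha_n}}{\partial x_1^{\alpha_1}\cdots \partial x_n^{\alpha_n}}.
\]

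Next I would establish surjectivity and injectivity. Surjectivity is immediate: the ring $\K[\frac{\partial}{\partial x_1},\ldots,\frac{\partial}{\partial x_n}]$ is by definition generated as a $\K$-algebra by the operators $\frac{\partial}{\partial x_i}$, and each of these lies in the image of $\Phi$, so the image is the whole target. For injectivity, the cleanest route is to show that the monomial differential operators $\frac{\partial^{|\alpha|}}{\partial x^\alpha}$ are $\K$-linearly independent, so that a nonzero polynomial cannot map to the zero operator. I would test an arbitrary operator $\Phi(f) = \sum_\alpha c_\alpha \frac{\partial^{|\alpha|}}{\partial x^\alpha}$ against the monomial functions $x^\beta = x_1^{\beta_1}\cdots x_n^{\beta_n}$: applying $\frac{\partial^{|\alpha|}}{\partial x^\alpha}$ to $x^\beta$ and evaluating at the origin picks out precisely the coefficient indexed by $\alpha = \beta$ (up to the nonzero factor $\beta_1!\cdots\beta_n!$), since all other terms either vanish identically or vanish at $0$. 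Hence $\Phi(f) = 0$ forces every $c_\alpha = 0$, so $f = 0$ and $\Phi$ is injective.

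I do not expect a serious obstacle in this argument; the only genuine hypothesis being used is the analyticity (or at least sufficient smoothness) ensuring equality of mixed partials, together with the constancy of the coefficients so that the target really is a commutative ring. The subtlest point, and the one I would state carefully, is the linear independence of the monomial operators, since this is what underlies injectivity; the pairing with monomial functions and evaluation at a point is the mechanism that makes it transparent. Everything else is a formal consequence of the universal property of the polynomial ring together with the definition of the operator ring as generated by the partials.
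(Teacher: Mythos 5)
The paper itself contains no proof of this proposition: it is imported as a citation to Gr\"obner's 1937 paper \cite{Grobner37}, so there is no internal argument to compare yours against. On its own merits your proof is correct in the setting the paper actually works in, and it is the standard one: the universal property of $\K[x_1,\ldots,x_n]$ as the free commutative $\K$-algebra, pairwise commutativity of the $\frac{\partial}{\partial x_i}$ on analytic functions, surjectivity because the target ring is by definition generated by the partials, and injectivity via the pairing that sends $(D^\alpha, x^\beta)$ to $\bigl(D^\alpha x^\beta\bigr)(0)=\beta_1!\cdots\beta_n!\,\delta_{\alpha\beta}$. The explicit formula you obtain for $\Phi$ on monomials is exactly the identification $D^\alpha=\Phi(x^\alpha)$ that the paper uses throughout, so your construction is the right one.

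One caveat deserves attention. Your injectivity step hinges on the factor $\beta_1!\cdots\beta_n!$ being nonzero, which holds only in characteristic zero. The proposition as stated speaks of an \emph{arbitrary} field, but over a field of characteristic $p$ the statement, read as you read it (with the target a ring of operators acting on polynomials or analytic functions), is actually false: the operator $\left(\frac{\partial}{\partial x_i}\right)^{p}$ annihilates every monomial $x_i^{m}$, since the coefficient $m(m-1)\cdots(m-p+1)$ is a product of $p$ consecutive integers and hence divisible by $p$; the operator ring therefore has nonzero nilpotents and cannot be isomorphic to a polynomial ring. So your argument proves the proposition under the paper's standing convention (stated in its conventions section) that $\K$ has characteristic zero, and no argument can dispense with that hypothesis; it would be worth flagging this explicitly rather than letting the phrase ``arbitrary field'' pass, since your proof silently uses the restriction at exactly one point.
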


\subsubsection{Derivations and monomials}
M. Janet considers monomials in the variables $x_1,\ldots,x_n$ and use implicitly the isomorphism $\Phi$ of Proposition~\ref{Proposition:IsomorphismPartialX}. To a monomial $x^\alpha=x_1^{\alpha_1}x_2^{\alpha_2}\ldots x_n^{\alpha_n}$ he associates the differential operator  
\[
D^\alpha:=\Phi(x^\alpha)=
\frac{\partial^{|\alpha|}\quad}{\partial x_1^{\alpha_1}\partial x_2^{\alpha_2}\ldots \partial x_n^{\alpha_n}}.
\]
In {\cite[Chapter I]{Janet29}, M. Janet considered finite monomial PDE systems. The equations are of the form~(\ref{Equation:PDEform}) and the system having a finitely many equations, the set of monomials associated to the PDE system is finite.
The first result of the monograph is a finiteness result on monomials stating that a sequence of monomials in which none is a multiple of an earlier one is necessarily finite. He proved this result by induction on the number of variables. We can formulate this result as follows.

\begin{lemma}[{\cite[\textsection 7]{Janet29}}]
\label{Lemma:Janet1}
Let $\Ur$ be a subset of $\Mr(x_1,\ldots,x_n)$. If, for any monomials $u$ and $u'$ in~$\Ur$, the monomial $u$ does not divide $u'$, then the set $\Ur$ is finite.
\end{lemma}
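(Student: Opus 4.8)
The plan is to argue by induction on the number $n$ of variables, as Janet indicates. The statement asserts that any antichain $\Ur$ for the divisibility order on $\Mr(x_1,\ldots,x_n)$ is finite, so I may assume $\Ur$ is nonempty, the empty case being trivial. For the base case $n=1$, the monomials are the powers $x_1^j$, and $x_1^j$ divides $x_1^k$ precisely when $j\leq k$; divisibility is then a total order, so an antichain contains at most one element and is finite.

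For the inductive step I would assume the result for $n-1$ variables and fix one monomial $u_0=x_1^{a_1}\ldots x_n^{a_n}$ in $\Ur$. The key observation is that for every other $u$ in $\Ur$, the failure of divisibility forces $\deg_i(u)<a_i$ for at least one index $i$, since otherwise $u_0$ would divide $u$. Consequently, writing $\Ur_{i,j}=\{\,u\in\Ur \mid \deg_i(u)=j\,\}$, I obtain the inclusion
\[
\Ur\setminus\{u_0\}\subseteq\bigcup_{i=1}^n\bigcup_{j=0}^{a_i-1}\Ur_{i,j},
\]
a union indexed by finitely many pairs $(i,j)$. It therefore suffices to prove that each $\Ur_{i,j}$ is finite.

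Each set $\Ur_{i,j}$ consists of monomials whose degree in $x_i$ equals the fixed value $j$; factoring out the common power $x_i^j$ identifies $\Ur_{i,j}$ with a set of monomials in the remaining $n-1$ variables $\{x_1,\ldots,x_n\}\setminus\{x_i\}$. Since dividing by the common factor $x_i^j$ preserves the divisibility relation, this image is again an antichain in $\Mr$ of $n-1$ variables, and the inductive hypothesis makes it finite. Hence $\Ur\setminus\{u_0\}$ is contained in a finite union of finite sets, so $\Ur$ is finite, completing the induction.

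The base case and the finiteness of the index set $\{(i,j)\}$ are routine. The one step deserving care is the reduction to $n-1$ variables: I must check that removing the common factor $x_i^j$ is a divisibility-preserving bijection from $\Ur_{i,j}$ onto a subset of the monomials in the remaining variables, so that both the antichain hypothesis and the inductive hypothesis legitimately transfer. This is the crux where the dimension drops and where the combinatorial content of the lemma actually resides.
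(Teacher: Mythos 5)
Your proof is correct, and it follows exactly the approach the paper attributes to Janet: induction on the number of variables (the paper itself states the lemma without giving the proof, only citing Janet's monograph). Your argument — fixing $u_0$, covering the rest of the antichain by the finitely many strata $\Ur_{i,j}$ with $j<\deg_i(u_0)$, and factoring out $x_i^j$ to obtain a divisibility-preserving bijection onto an antichain in $n-1$ variables — correctly supplies the details of that induction, including the one delicate point you flag.
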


This result corresponds to Dickson's Lemma, \cite{Dickson13}, which asserts that any monomial ideal of~$\K[x_1,\ldots,x_n]$ is finitely generated.

\subsubsection{Stability of the multiplication}
\label{Subsubsection:StabilityMultiplication}
M. Janet paid a special attention to families of monomials with the following property.
A subset of monomial $\Ur$ of $\Mr(x_1,\ldots,x_n)$ is called \emph{multiplicatively stable} if for any monomial $u$ in $\Mr(x_1,\ldots,x_n)$ such that there exists $u'$ in $\Ur$ that divides $u$, then $u$ is in $\Ur$. In other words, the set $\Ur$ is closed under multiplication by monomials in $\Mr(x_1,\ldots,x_n)$.

As a consequence of Lemma~\ref{Lemma:Janet1}, if $\Ur$ is a multiplicatively stable subset of $\Mr(x_1,\ldots,x_n)$, then it contains only finitely many elements which are not multiples of any other elements in $\Ur$. Hence, there exists a finite subset $\Ur_f$ of $\Ur$ such that for any $u$ in $\Ur$, there exists $u_f$ in $\Ur_f$ such that $u_f$ divides $u$.

\subsubsection{Ascending chain condition}
M. Janet observed an other consequence of Lemma~\ref{Lemma:Janet1}: the \emph{ascending chain condition} on multiplicatively stable monomial sets that he formulated as follows. Any ascending sequence of multiplicatively stable subsets of $\Mr(x_1,\ldots,x_n)$
\[
\Ur_1 \subset \Ur_2 \subset \; \ldots \; \subset \Ur_k \subset \ldots 
\]
is finite. This corresponds to the Noetherian property on the set of monomials in finitely-many variables.

\subsubsection{Inductive construction}
Let us fix a total order on variables $x_n>x_{n-1}>\ldots > x_1$.
Let $\Ur$ be a finite subset of $\Mr(x_1,\ldots, x_n)$.
Let us define, for every $0\leq \alpha_n \leq \deg_n(\Ur)$,
\[
[\alpha_n] = \{u \in \Ur\;|\; \deg_{n}(u)=\alpha_n\,\}.
\]
The family $([0],\ldots ,[\deg_n(\Ur)])$ forms a partition of $\Ur$.
We define for every $0\leq \alpha_n \leq \deg_n(\Ur)$
\[
\overline{[\alpha_n]} =\{u \in \Mr(x_1,\ldots,x_{n-1}) \;|\; ux_n^{\alpha_n} \in \Ur\,\}.
\]
We set for every $0\leq i \leq \deg_n(\Ur)$
\[
\Ur_i' = \underset{0\leq \alpha_n \leq i}{\bigcup} \{u \in \Mr(x_1,\ldots,x_{n-1}) \;|\; \text{there exists $u'\in \overline{[\alpha_n]}$ such that $u'|u$}\,\}.
\]
We set 
\[
\Ur_k =
\begin{cases} 
\{\,ux_n^k\;|\; u\in \Ur'_k\,\} & \text{if $k< \deg_n(\Ur)$,} \\ 
\{\,ux_n^k\;|\; u\in \Ur'_{\deg_n(\Ur)}\,\} & \mbox{if $k\geq \deg_n(\Ur)$.}
\end{cases} 
\]
and  $M(\Ur) = \underset{k\geq 0}{\bigcup} \Ur_k$. 
By this inductive construction, M. Janet obtains the monomial ideal generated by~$\Ur$. Indeed, $M(\Ur)$ consists in the following set of monomial
\[
\{\,u \in \Mr(x_1,\ldots,x_n) \;|\; 
\text{there exists $u'$ in $\Ur$ such that $u'|u$} \,\}.
\]

\subsubsection{Example}
Consider the subset $\Ur=\{\,x_3x_2^2,x_3^3x_1^2\,\}$ of monomials in $\Mr(x_1,x_2,x_3)$.  We have
\[
[0] = \emptyset,
\quad
[1] = \{x_3x_2^2\},
\quad
[2] = \emptyset,
\quad
[3] = \{x_3^3x_1^2\}.
\]
Hence,
\[
\overline{[0]} = \emptyset,
\quad
\overline{[1]} = \{x_2^2\},
\quad
\overline{[2]} = \emptyset,
\quad
\overline{[3]} = \{x_1^2\}.
\]
The set $M(\Ur)$ is defined using of the following subsets:
\[
\Ur_0' = \emptyset,
\quad
\Ur_1' = \{x_1^{\alpha_1}x_2^{\alpha_2}\;|\; \alpha_2\geq 2\},
\quad
\Ur_2' = \Ur_1',
\quad
\Ur_3' = \{x_1^{\alpha_1}x_2^{\alpha_2}\;|\; \alpha_1\geq 2\;\text{ou}\;\alpha_2\geq 2\}.
\]

\subsubsection{Janet's multiplicative variables, 	{\cite[\textsection 7]{Janet20}}}
\label{Subsection:JanetMultiplicativeVariables}
Let us fix a total order $x_n>x_{n-1}>\ldots >x_1$ on variables.
Let $\Ur$ be a finite subset of $\Mr(x_1,\ldots,x_n)$.
For all $1\leq i \leq n$, we define the following subset of $\Ur$:
\[
[\alpha_i,\ldots,\alpha_n] = 
\{u\in \Ur\;|\;\deg_j(u)=\alpha_j\;\;\text{for all}\;\;i\leq j \leq n\}.
\]
That is $[\alpha_i,\ldots,\alpha_n]$ contains monomials of $\Ur$ of the form $vx_i^{\alpha_i}\ldots x_n^{\alpha_n}$, with $v$ in $\Mr(x_1,\ldots,x_{i-1})$.
The sets $[\alpha_i,\ldots,\alpha_n]$, for $\alpha_i,\ldots,\alpha_n$ in $\Nb$, form a partition of $\Ur$. Moreover, for all $1\leq i \leq n-1$, we have $[\alpha_i,\alpha_{i+1},\ldots,\alpha_n] \subseteq [\alpha_{i+1},\ldots,\alpha_n]$ and the sets $[\alpha_i,\ldots,\alpha_n]$, where $\alpha_i\in \Nb$, form a partition of~$[\alpha_{i+1},\ldots,\alpha_n]$.

Given a monomial $u$ in $\Ur$, the variable $x_n$ is said to be \emph{multiplicative for $u$ in the sense of Janet} if 
\[
\deg_n(u) = \deg_n(\Ur).
\]
For $i\leq n-1$, the variable $x_i$ is said to be \emph{multiplicative for $u$ in the sense of Janet} if 
\[
u\in [\alpha_{i+1},\ldots,\alpha_n]
\qquad\text{and}\qquad
\deg_i(u)=
\deg_i([\alpha_{i+1},\ldots,\alpha_n]).
\]
We will denote by $\mult_{\div{J}}^\Ur(u)$ the set of multiplicative variables of $u$ in the sense of Janet with respect to the set~$\Ur$, also called \emph{$\div{J}$-multiplicative variables}.

Note that, by definition, for any $u$ and $u'$ in $[\alpha_{i+1},\ldots,\alpha_n]$, we have 
\[
\{x_{i+1},\ldots,x_n\} \cap \mult_\div{J}^\Ur(u) 
=
\{x_{i+1},\ldots,x_n\} \cap \mult_\div{J}^\Ur(u').
\]
As a consequence, we will denote by $\mult_\div{J}^\Ur([\alpha_{i+1},\ldots,\alpha_n])$ this set of multiplicative variables. 

\subsubsection{Example}
Consider the subset $\Ur=\{x_2x_3,x_2^2,x_1\}$ of $\Mr(x_1,x_2,x_3)$ with the order $x_3 > x_2 > x_1$. We have $\deg_3(\Ur)=1$, hence the variable $x_3$ is $\div{J}$-multiplicative for~$x_3x_2$ and not $\div{J}$-multiplicative for~$x_2^2$ and $x_1$.

For $\alpha \in \Nb$, we have 
$[\alpha]=\{u\in \Ur \;|\; \deg_3(u)=\alpha\}$, hence
\[
[0] = \{x_2^2,x_1\},
\qquad
[1]=\{x_2x_3\}.
\]
We have $\deg_2(x_2^2)=\deg_2([0])$, $\deg_2(x_1)\neq \deg_2([0])$ and 
$\deg_2(x_2x_3)=\deg_2([1])$, hence the variable $x_2$ is $\div{J}$-multiplicative for $x_2^2$ and $x_2x_3$ and not $\div{J}$-multiplicative for $x_1$. We have
\[
[0,0]=\{x_1\},
\quad
[0,2]=\{x_2^2\},
\quad 
[1,1]=\{x_2x_3\}
\]
and $\deg_1(x_2^2)=\deg_1([0,2])$, $\deg_1(x_1)= \deg_1([0,0])$ and $\deg_1(x_3x_2)=\deg_1([1,1])$, hence the variable~$x_1$ is $\div{J}$-multiplicative for $x_1$, $x_2^2$ and $x_3x_2$.

\subsubsection{Janet divisor}
\label{Definition:JanetDivisor}
Let $\Ur$ be a subset of $\Mr(x_1,\ldots,x_n)$. A monomial $u$ in $\Ur$ is called \emph{Janet divisor} of a monomial $w$ in $\Mr(x_1,\ldots,x_n)$ with respect to $\Ur$, if there is a decomposition $w=uv$, where any variable occurring in $v$ is $\div{J}$-multiplicative with respect to $\Ur$. 

\begin{proposition}
\label{Proposition:UnicityJanetDivisor}
Let $\Ur$ be a subset of $\Mr(x_1,\ldots,x_n)$ and $w$ be a monomial in $\Mr(x_1,\ldots,x_n)$. Then~$w$ admits in $\Ur$ at most one Janet divisor with respect to $\Ur$.
\end{proposition}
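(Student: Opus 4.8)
The plan is to assume that $w$ has two Janet divisors $u$ and $u'$ in $\Ur$ and to prove $u=u'$ by showing that they carry the same degree in every variable. Writing $w=uv=u'v'$, the defining property of a Janet divisor tells us that every variable occurring in $v$ lies in $\mult_{\div{J}}^\Ur(u)$ and that every variable occurring in $v'$ lies in $\mult_{\div{J}}^\Ur(u')$. I would then establish the equalities $\deg_i(u)=\deg_i(u')$ by a descending induction on $i$, from $i=n$ down to $i=1$; since a monomial is determined by its sequence of degrees, this immediately yields $u=u'$.

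The engine of the induction is a dichotomy argument that I would run in the same way at each step. Suppose, for contradiction, that $\deg_i(u)\neq\deg_i(u')$, say $\deg_i(u)<\deg_i(u')$. Comparing degrees in $w=uv=u'v'$ gives $\deg_i(v)=\deg_i(w)-\deg_i(u)>\deg_i(w)-\deg_i(u')=\deg_i(v')\geq 0$, so $x_i$ actually occurs in $v$ and is therefore $\div{J}$-multiplicative for $u$. For the top variable ($i=n$) this means $\deg_n(u)=\deg_n(\Ur)$, forcing $\deg_n(u')\leq\deg_n(\Ur)=\deg_n(u)$, which contradicts $\deg_n(u)<\deg_n(u')$; this settles the base case. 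For $i<n$, the inductive hypothesis ensures $\deg_j(u)=\deg_j(u')=\alpha_j$ for all $j>i$, so both $u$ and $u'$ lie in the same block $[\alpha_{i+1},\ldots,\alpha_n]$ of Janet's partition. Multiplicativity of $x_i$ for $u$ then reads $\deg_i(u)=\deg_i([\alpha_{i+1},\ldots,\alpha_n])$, i.e.\ $\deg_i(u)$ is the maximal $x_i$-degree occurring in that block; but $u'$ belongs to the block as well, whence $\deg_i(u')\leq\deg_i(u)$, again contradicting $\deg_i(u)<\deg_i(u')$. Exchanging the roles of $u$ and $u'$ rules out $\deg_i(u)>\deg_i(u')$, so $\deg_i(u)=\deg_i(u')$.

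The only real subtlety — the point I would be most careful about — is the bookkeeping that keeps $u$ and $u'$ inside a common partition block: the maximality characterisation of multiplicativity for $x_i$ is meaningful only relative to the block $[\alpha_{i+1},\ldots,\alpha_n]$, and the inequality $\deg_i(u')\leq\deg_i(u)$ is legitimate precisely because the already-proved equalities in the higher variables place $u'$ in that same block. This is exactly what the descending induction supplies, with the base case $i=n$ handled by the analogous argument relative to the whole of $\Ur$. Once $\deg_i(u)=\deg_i(u')$ has been obtained for all $1\leq i\leq n$, the two monomials coincide, which establishes the uniqueness of the Janet divisor.
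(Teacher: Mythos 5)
Your proof is correct and follows essentially the same route as the paper's: both arguments work down from $x_n$, first forcing agreement of the degrees in $x_n$ and then, within the common block $[\alpha_{i+1},\ldots,\alpha_n]$, deriving a contradiction from a strict degree inequality via the maximality characterisation of $\div{J}$-multiplicative variables. The only difference is presentational — you organise the comparison as a formal descending induction, while the paper selects the first index of disagreement — so the two proofs are the same argument.
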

\begin{proof}
If $u$ is a Janet divisor of $w$ with respect to $\Ur$, there is $v$ in $\Mr(\mult_\div{J}^\Ur(u))$ such that $w=uv$. We have $\deg_n(v)=\deg_n(w)-\deg_n(u)$. If $\deg_n(w)\geq \deg_n(\Ur)$, then the variable $x_n$ is $\div{J}$-multiplicative and~$\deg_n(v)=\deg_n(w)-\deg_n(\Ur)$. If $\deg_n(w)<\deg_n(\Ur)$, then $x_n$ cannot be $\div{J}$-multiplicative and~$\deg_n(v)=0$. 

As a consequence, for any Janet divisors $u$ and $u'$ in $\Ur$ of $w$, we have $\deg_n(u)=\deg_n(u')$ and~$u,u'\in [\alpha]$ for some $\alpha\in \Nb$. 

Suppose now that $u$ and $u'$ are two distinct Janet divisor of $w$ in $\Ur$. There exists $1<k\leq n$ such that $u,u'\in [\alpha_k,\ldots,\alpha_n]$ and $\deg_{k-1}(u)\neq \deg_{k-1}(u')$. Suppose that $\deg_{k-1}(u) >\deg_{k-1}(u')$, then the variable $x_{k-1}$ cannot be $\div{J}$-multiplicative for $u'$ with respect to $\Ur$. It follows that $u'$ cannot be a Janet divisor of $w$. This leads to a contradiction, hence $u=u'$.
\end{proof}

\subsubsection{Complementary monomials}
\label{Subsubsection:ComplementaryMonomials}
Let $\Ur$ be a finite subset of $\Mr(x_1,\ldots,x_n)$. 
The set of \emph{complementary monomials of $\Ur$} is the set of monomial denoted by $\comp{\Ur}$ defined by 
\begin{equation}
\label{Equation:ComplementaryMonomials}
\comp{\Ur} = \bigcup_{1\leq i \leq n} \; \compp{i}{\Ur},
\end{equation}
where 
\[
\compp{n}{\Ur} = 
\{ x_n^\beta \; | \; 0 \leq \beta \leq \deg_n(\Ur)\;\text{and}\; 
[\beta]= \emptyset \},
\]
and for every $1\leq i < n$
\[
\compp{i}{\Ur} = 
\big\{\,
x_i^\beta x_{i+1}^{\alpha_{i+1}}\ldots x_n^{\alpha_n} \; \big| \;
[\alpha_{i+1},\ldots,\alpha_n] \neq \emptyset, 
\;
0 \leq \beta < \deg_i([\alpha_{i+1},\ldots,\alpha_n]),
\;
[\beta, \alpha_{i+1}, \ldots , {\alpha_n}] = \emptyset
\,\big\}.
\]
Note that the union in~(\ref{Equation:ComplementaryMonomials}) is disjoint, since for $i\neq j$ we have $\compp{i}{\Ur} \cap \compp{j}{\Ur} = \emptyset$.

\subsubsection{Multiplicative variables of complementary monomials}
For any monomial $u$ in $\comp{\Ur}$, we define the set $\cmult^{\comp{\Ur}}$ of \emph{multiplicative variables for $u$ with respect to complementary monomials} in $\comp{\Ur}$ as follows. If the monomial $u$ is in $\compp{n}{\Ur}$, we set
\[
\cmult_\div{J}^{\compp{n}{\Ur}}(u) = \{x_1,\ldots,x_{n-1}\}.
\]
For $1\leq i \leq n-1$, for any monomial $u$ in $\compp{i}{\Ur}$, there exists $\alpha_{i+1},\ldots,\alpha_n$ such that $u\in [\alpha_{i+1},\ldots,\alpha_n]$. Then 
\[
\cmult_\div{J}^{\compp{i}{\Ur}}(u) = \{x_1,\ldots,x_{i-1}\} \cup 
\mult_\div{J}^\Ur([\alpha_{i+1},\ldots,\alpha_n]).
\] 
Finally, for $u$ in $\comp{\Ur}$, there exists an unique $1\leq i_u \leq n$ such that $u\in \compp{i_u}{\Ur}$. Then we set
\[
\cmult_\div{J}^{\comp{\Ur}}(u) = \cmult_\div{J}^{\compp{i_u}{\Ur}}(u).
\]

\subsubsection{Example, {\cite[p. 17]{Janet29}}}
Consider the subset $\Ur=\{\,x_3^3x_2^2x_1^2,x_3^3x_1^3,x_3x_2x_1^3,x_3x_2\,\}$ of monomials in $\Mr(x_1,x_2,x_3)$ with the order $x_3>x_2>x_1$. The following table gives the multiplicative variables for each monomial:
\begin{center}
\begin{tabular}{c|ccc}
$x_3^3x_2^2x_1^2$ & $x_3$ & $x_2$ & $x_1$\\
$x_3^3x_1^3$ & $x_3$ & & $x_1$\\
$x_3x_2x_1^3$ & & $x_2$ & $x_1$\\
$x_3x_2$ & & $x_2$ & \\
\end{tabular}
\end{center}

The set of complementary monomials are
\[
\compp{3}{\Ur} = \{1,x_3^2\},
\quad
\compp{2}{\Ur} = \{x_3^3x_2,x_3\},
\quad
\compp{1}{\Ur} = \{x_3^3x_2^2x_1,x_3^3x_2^2,x_3^3x_1^2,x_3^3x_1,x_3^3,x_3x_2x_1^2,x_3x_2x_1\}.
\]
The following table gives the multiplicative variables for each monomial:
\begin{center}
\begin{tabular}{c|ccc}
$1$, $x_3^2$ &  & $x_2$ & $x_1$\\
\hline
$x_3^3x_2$ & $x_3$ & & $x_1$\\
$x_3$ & & & $x_1$\\
\hline
$x_3^3x_2^2x_1$, $x_3^3x_2^2$ & $x_3$ & $x_2$ & \\
$x_3^3x_1^2$, $x_3x_1$, $x_3^3$ & $x_3$ &  & \\
$x_3x_2x_1^2$, $x_3x_2x_1$ &  & $x_2$  & \\
\end{tabular}
\end{center}

\subsection{Completion procedure}

In this subsection, we present the notion of complete system introduced by M. Janet in \cite{Janet29}. In particular, we recall the completion procedure that he gave in order to complete a finite set of monomials.

\subsubsection{Complete systems}
\label{Subsection:CompleteSystem}
Let $\Ur$ be a set of monomials of $\Mr(x_1,\ldots,x_n)$.
For a monomial $u$ in $\Ur$ \linebreak (resp. in $\comp{\Ur}$), M.~Janet defined the \emph{involutive cone of $u$ with respect to $\Ur$} (resp. \emph{to $\comp{\Ur}$}) as the following set of monomials:
\[
\cone_\div{J}(u,\Ur) =
\{\, uv \; | \; v\in \Mr(\mult_\div{J}^\Ur(u))\,\},
\qquad
\text{(resp.}\quad 
\comp{\cone}_\div{J}(u,\Ur) =
\{\, uv \; | \; v\in \Mr(\cmult_\div{J}^{\comp{\Ur}}(u))\,\}
\text{\;)}.
\]
The \emph{involutive cone of the set $\Ur$} is defined by
\[
\cone_\div{J}(\Ur) = 
\underset{u \in \Ur}{\bigcup} \; \cone_\div{J}(u,\Ur),
\qquad
\text{(resp.}\quad 
\comp{\cone}_\div{J}(\Ur) = 
\underset{u \in \comp{\Ur}}{\bigcup} \; \comp{\cone}_\div{J}(u,\Ur)
\text{\;)}.
\]
M. Janet called \emph{complete} a set of monomials $\Ur$ when
$\cone(\Ur) = \cone_\div{J}(\Ur)$.
An involutive cone is called class in Janet's monograph~\cite{Janet29}. The terminology "\emph{involutive}" first appear in \cite{Gerdt97} by V. P. Gerdt and became standard now. We refer the reader to \cite{Mansfield96} for a discussion on relation between this notion with the notion of involutivity in the work of \'E. Cartan.

\begin{proposition}[{\cite[p. 18]{Janet29}}]
\label{Proposition:PartitionMr}
For any finite set $\Ur$ of monomials of $\Mr(x_1,\ldots,x_n)$, we have the following partition 
\[
\Mr(x_1,\ldots,x_n) = \cone_\div{J}(\Ur) \amalg \comp{\cone}_\div{J}(\Ur).
\]
\end{proposition}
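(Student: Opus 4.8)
The plan is to prove that the involutive cone $\cone_\div{J}(\Ur)$ and the complementary involutive cone $\comp{\cone}_\div{J}(\Ur)$ together cover all of $\Mr(x_1,\ldots,x_n)$ and that they are disjoint, thereby establishing the partition. First I would observe that since $\Ur$ is finite we may induct on the number of variables $n$, using the degree-in-$x_n$ stratification $[\alpha_n]$ and $\overline{[\alpha_n]}$ introduced in the inductive construction above. For a fixed monomial $w = w' x_n^{\beta}$ with $w'\in \Mr(x_1,\ldots,x_{n-1})$, I would split on whether $\beta < \deg_n(\Ur)$ or $\beta \geq \deg_n(\Ur)$, because this is exactly the dichotomy that governs whether $x_n$ can be $\div{J}$-multiplicative, as already exploited in the proof of Proposition~\ref{Proposition:UnicityJanetDivisor}.

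For \textbf{covering}, I would argue that every $w$ lands in one of the two cones. If $w$ has a Janet divisor in $\Ur$, then $w\in\cone_\div{J}(\Ur)$ by definition. If not, I would locate the obstruction: there is a largest index $i$ at which the degree of $w$ exceeds what any candidate stratum $[\alpha_{i+1},\ldots,\alpha_n]$ permits, or a stratum $[\beta]$ (resp. $[\beta,\alpha_{i+1},\ldots,\alpha_n]$) is empty while the corresponding weaker stratum is nonempty. This is precisely the defining condition of a complementary monomial in $\compp{i}{\Ur}$, so one extracts a complementary divisor $u\in\comp{\Ur}$ of $w$ all of whose remaining variables are $\cmult_\div{J}^{\comp{\Ur}}$-multiplicative, placing $w\in\comp{\cone}_\div{J}(\Ur)$. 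The careful bookkeeping here is matching the three cases in the definition of $\compp{i}{\Ur}$ (the bounds $0\leq\beta<\deg_i([\alpha_{i+1},\ldots,\alpha_n])$ and the emptiness of $[\beta,\alpha_{i+1},\ldots,\alpha_n]$) to the failure mode of Janet divisibility.

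For \textbf{disjointness}, I would suppose $w\in\cone_\div{J}(u,\Ur)\cap\comp{\cone}_\div{J}(u',\Ur)$ with $u\in\Ur$ and $u'\in\comp{\Ur}$, and derive a contradiction by comparing degrees variable by variable from $x_n$ downward. The key is that $u'$, being complementary, has at some index $i$ a degree $\deg_i(u')$ that is \emph{strictly} forced into a gap left empty by $\Ur$, whereas $u\in\Ur$ lies in a nonempty stratum; reading off $\deg_i(w)$ two ways then forces $w$ simultaneously into and out of that stratum. Since the decomposition of $w$ through its (unique) Janet divisor fixes the degrees $\deg_j(w)$ in the top variables, the same matching-of-degrees argument used in Proposition~\ref{Proposition:UnicityJanetDivisor} applies.

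The main obstacle I expect is the covering direction, specifically making the ``first failure'' argument rigorous: one must show that when $w$ has no Janet divisor, the minimal index witnessing this failure produces a genuine complementary monomial with all the right multiplicativity, rather than merely some divisor. I would handle this by an explicit descent: reduce $w$ stratum by stratum, at each stage either confirming multiplicativity of $x_i$ (and continuing) or hitting one of the two empty-stratum conditions that defines $\compp{i}{\Ur}$. Induction on $n$ via the sets $\overline{[\alpha_n]}\subset\Mr(x_1,\ldots,x_{n-1})$ keeps this descent finite and reduces the $n$-variable partition to the $(n-1)$-variable partition.
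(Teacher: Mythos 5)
First, a point of reference: the paper gives no proof of this proposition at all — it is quoted from Janet's monograph (p.~18) — so there is no in-paper argument to compare yours against; what follows judges your plan on its own terms. Your plan is sound and is essentially the canonical (and, presumably, Janet's) argument. Covering by descent through the strata, stopping either at an empty sub-stratum — which is verbatim the definition of a monomial of $\compp{i}{\Ur}$ — or, if no stop ever occurs, at a nonempty stratum $[\alpha_1,\ldots,\alpha_n]=\{u\}$ whose unique element is then a Janet divisor of $w$, does produce the covering; and disjointness by top-down degree comparison in the style of Proposition~\ref{Proposition:UnicityJanetDivisor} also works: assuming $w=uv=u'v'$ with $u\in\Ur$, $u'\in\compp{i}{\Ur}$, one shows $\deg_j(u)=\deg_j(u')=\alpha_j$ for all $j>i$ and $\deg_i(u)=\deg_i(u')=\beta$, so that $u$ would lie in the empty stratum $[\beta,\alpha_{i+1},\ldots,\alpha_n]$, a contradiction.

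Two points should be corrected or made explicit when you write this out. First, in the covering step, your first alleged failure mode ("the degree of $w$ exceeds what any candidate stratum permits") is not an obstruction: when $\deg_i(w)\geq\deg_i([\alpha_{i+1},\ldots,\alpha_n])$, the candidate divisors are the elements of maximal $x_i$-degree in the stratum, for which $x_i$ is multiplicative, and the descent simply continues; the only genuine obstruction is the one in your second clause, namely $\deg_i(w)$ strictly below the stratum maximum with $[\deg_i(w),\alpha_{i+1},\ldots,\alpha_n]$ empty. Second, the bookkeeping fact that makes both halves close up — and which your sketch gestures at but never states — is the coincidence of the two notions of multiplicativity at indices $j>i$: by definition $x_j\in\cmult_\div{J}^{\comp{\Ur}}(u')$ if and only if $x_j\in\mult_\div{J}^\Ur([\alpha_{i+1},\ldots,\alpha_n])$, i.e.\ if and only if $\alpha_j=\deg_j([\alpha_{j+1},\ldots,\alpha_n])$, which is exactly the situation in which the descent allowed $\deg_j(w)>\alpha_j$. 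This is what guarantees that the quotient $w/u'$ lies in $\Mr(\cmult_\div{J}^{\comp{\Ur}}(u'))$ (so you really do land in $\comp{\cone}_\div{J}(\Ur)$, not merely find some divisor), and it is also what propagates the degree comparison downward in the disjointness step; note finally that there it is the divisor $u$, not $w$, that gets forced into the empty stratum. Neither point breaks your argument; both are precisely the bookkeeping you flagged as the main obstacle.
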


\subsubsection{A proof of completeness by induction}
Let $\Ur$ be a finite set of monomials in $\Mr(x_1,\ldots,x_n)$. We consider the partition $[0],\ldots ,[\deg_n(\Ur)]$ of monomials in $\Ur$ by their degrees in $x_n$. Let $\alpha_1<\alpha_2<\ldots <\alpha_k$ be the positive integers such that $[\alpha_i]$ is non-empty. Recall that $\overline{[\alpha_i]}$ is the set of monomials $u$ in $\Mr(x_1,\ldots,x_{n-1})$ such that  $ux_n^{\alpha_i}$ is in  $\Ur$. With these notations, the following result gives an inductive method to prove that a finite set of monomials is complete.

\begin{proposition}[{\cite[p. 19]{Janet29}}]
The finite set $\Ur$ is complete if and only if the two following conditions are satisfied:
\begin{enumerate}[{\bf i)}]
\item the sets $\overline{[\alpha_1]},\ldots,\overline{[\alpha_k]}$ are complete,
\item for any $1\leq i < k$, the set $\overline{[\alpha_i]}$ is contains in $\cone_\div{J}(\overline{[\alpha_i+1]})$.
\end{enumerate}
\end{proposition}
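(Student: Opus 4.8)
The plan is to analyze $\Ur$ slice by slice according to the degree in the largest variable $x_n$, reducing every statement about $\div{J}$-multiplicativity in $\Mr(x_1,\ldots,x_n)$ to the corresponding statement in $\Mr(x_1,\ldots,x_{n-1})$ for the projected layers $\overline{[\alpha_i]}$. The cornerstone is a comparison of multiplicative variables: writing a monomial $u$ of $\Ur$ with $\deg_n(u)=\alpha_i$ as $u=\bar u\,x_n^{\alpha_i}$ with $\bar u\in\overline{[\alpha_i]}$, I would first show that $x_n$ is $\div{J}$-multiplicative for $u$ with respect to $\Ur$ if and only if $\alpha_i=\deg_n(\Ur)=\alpha_k$ (the top layer), and that for $j\leq n-1$ the variable $x_j$ is $\div{J}$-multiplicative for $u$ with respect to $\Ur$ if and only if $x_j$ is $\div{J}$-multiplicative for $\bar u$ with respect to $\overline{[\alpha_i]}$. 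This last equivalence follows from the fact that the block $[\beta_{j+1},\ldots,\beta_{n-1},\alpha_i]$ inside $\Ur$ projects bijectively onto the block $[\beta_{j+1},\ldots,\beta_{n-1}]$ inside $\overline{[\alpha_i]}$ under $u'\mapsto\bar{u'}$, so the defining degrees $\deg_j$ agree on both sides.

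Granting this, I would compute the two cones degree by degree in $x_n$. Writing $w=\bar w\,x_n^{d}$ with $\bar w\in\Mr(x_1,\ldots,x_{n-1})$, the classical cone gives $w\in\cone(\Ur)$ iff $\bar w\in\bigcup_{\alpha_i\leq d}\cone(\overline{[\alpha_i]})$, while the multiplicative-variable comparison shows that $w\in\cone_\div{J}(\Ur)$ iff either $d=\alpha_i$ for some $i<k$ and $\bar w\in\cone_\div{J}(\overline{[\alpha_i]})$, or $d\geq\alpha_k$ and $\bar w\in\cone_\div{J}(\overline{[\alpha_k]})$. The key structural point is that only the top layer carries $x_n$ as a multiplicative variable, so the layers with $i<k$ contribute to the involutive cone only in their own degree $d=\alpha_i$, whereas the top layer absorbs all degrees $d\geq\alpha_k$.

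With these two descriptions in hand, both implications are read off by comparing slices, using throughout that $\cone_\div{J}(\Ur)\subseteq\cone(\Ur)$, so that completeness is the single inclusion $\cone(\Ur)\subseteq\cone_\div{J}(\Ur)$. For the direct implication, at a degree $d$ lying strictly between two consecutive non-empty degrees the involutive slice is empty, whereas the classical slice contains $\cone(\overline{[\alpha_i]})$ (with $\alpha_i$ the largest non-empty degree $\leq d$) and is non-empty; hence completeness forces the non-empty degrees below $\alpha_k$ to be consecutive, i.e. $\alpha_{i+1}=\alpha_i+1$, and then comparing the slice at $d=\alpha_i$ yields that each $\overline{[\alpha_i]}$ is complete, while comparing the slice at $d=\alpha_i+1$ yields $\overline{[\alpha_i]}\subseteq\cone_\div{J}(\overline{[\alpha_i+1]})$, which is condition (ii). For the converse, condition (ii) first forces $\overline{[\alpha_i+1]}\neq\emptyset$, hence the consecutiveness $\alpha_{i+1}=\alpha_i+1$; then, since $\cone(\cdot)$ is a closure operator and each layer is complete, the inclusions $\overline{[\alpha_i]}\subseteq\cone_\div{J}(\overline{[\alpha_{i+1}]})\subseteq\cone(\overline{[\alpha_{i+1}]})$ give the nested chain $\cone(\overline{[\alpha_1]})\subseteq\cdots\subseteq\cone(\overline{[\alpha_k]})$, after which each slice comparison collapses $\bigcup_{\alpha_j\leq d}\cone(\overline{[\alpha_j]})$ to $\cone(\overline{[\alpha_i]})=\cone_\div{J}(\overline{[\alpha_i]})$ and equality of the cones follows.

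The main obstacle I anticipate is the careful treatment of the top variable $x_n$: the asymmetry that $x_n$ is multiplicative only in the maximal layer is exactly what makes the involutive cone sparse in intermediate degrees, and getting the slice description right, in particular that any gap in the non-empty degrees destroys completeness, is where the argument must be most precise. A secondary point is fixing the meaning of $\overline{[\alpha_i+1]}$ in condition (ii) as the projection of the layer of $x_n$-degree $\alpha_i+1$; with this reading condition (ii) simultaneously encodes the absence of gaps and the required inclusion, and the equivalence goes through cleanly.
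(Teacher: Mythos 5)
Your proposal is correct, and complete in all essential steps. One point worth making explicit: the paper does not prove this proposition at all --- it is quoted from Janet's monograph (p.~19) and only used to derive Proposition~\ref{Proposition:Completeness} --- so your argument supplies a proof rather than parallels one. Checking your plan against the definitions of~\ref{Subsection:JanetMultiplicativeVariables}: for $u=\bar u\,x_n^{\alpha_i}$ in $\Ur$ the variable $x_n$ is $\div{J}$-multiplicative exactly when $\alpha_i=\deg_n(\Ur)=\alpha_k$, and for $j\leq n-1$ the block of $u$ in $\Ur$ lies inside the layer $[\alpha_i]$ and projects bijectively, preserving $\deg_1,\ldots,\deg_{n-1}$, onto the block of $\bar u$ in $\overline{[\alpha_i]}$; this gives your multiplicativity transfer, from which the degreewise descriptions of $\cone(\Ur)$ and $\cone_\div{J}(\Ur)$ follow, and both implications are then read off correctly, including the gap argument (an empty layer at an intermediate degree makes the involutive slice empty while the classical slice is not, so completeness forces the non-empty degrees to be consecutive). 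Your insistence on reading $\overline{[\alpha_i+1]}$ as the layer of $x_n$-degree $\alpha_i+1$, rather than the next non-empty layer $\overline{[\alpha_{i+1}]}$, is not pedantry but is needed for the statement to be true: with the latter reading the proposition fails, e.g.\ for $\Ur=\{x_1,\,x_1x_2^2\}$ in $\Mr(x_1,x_2)$ one has $\overline{[0]}=\overline{[2]}=\{x_1\}$ complete and $\overline{[0]}\subseteq\cone_\div{J}(\overline{[2]})$, yet $x_1x_2$ lies in $\cone(\Ur)\setminus\cone_\div{J}(\Ur)$, so $\Ur$ is not complete.
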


As an immediate consequence of this proposition, M. Janet obtained the following characterisation.

\begin{proposition}[{\cite[p. 20]{Janet29}}]
\label{Proposition:Completeness}
A finite set $\Ur$ of monomials of $\Mr(x_1,\ldots,x_n)$ is complete if and only if, for any $u$ in $\Ur$ and any $x$ non-multiplicative variable of $u$ with respect to $\Ur$, $ux$ is in $\cone_\div{J}(\Ur)$.
\end{proposition}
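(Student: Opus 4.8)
The plan is to reduce the statement to the inductive characterisation of completeness stated immediately above, and to argue by induction on the number $n$ of variables. First I would record the elementary half. Since Janet's division refines the classical division, one always has $\cone_\div{J}(\Ur)\subseteq\cone(\Ur)$, so that $\Ur$ is complete precisely when the reverse inclusion $\cone(\Ur)\subseteq\cone_\div{J}(\Ur)$ holds. The forward implication is then immediate: if $\Ur$ is complete and $x$ is a non-multiplicative variable of $u\in\Ur$, then $ux$ is a classical multiple of $u$, hence $ux\in\cone(\Ur)=\cone_\div{J}(\Ur)$. Only the converse requires work, and this is where I would place the whole argument.

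For the converse I would assume the local condition and prove completeness by induction on $n$. The case $n=1$ is a direct check: over a single variable the condition simply forbids gaps in the sequence of $x_1$-degrees occurring in $\Ur$, which is exactly one-variable completeness. For the inductive step, fix the order $x_n>\dots>x_1$ and decompose $\Ur$ into its slices $[\alpha]$ by the degree in $x_n$, writing $\overline{[\alpha]}\subseteq\Mr(x_1,\ldots,x_{n-1})$ for the associated sets of cofactors. The structural fact I would establish first is the compatibility of multiplicative variables under slicing: the bijection $\bar u\leftrightarrow \bar u x_n^{\alpha}$ between $\overline{[\alpha]}$ and $[\alpha]$ preserves $\deg_j$ for every $j\le n-1$ and identifies the Janet class $[\alpha_{j+1},\ldots,\alpha_{n-1}]$ computed in $\overline{[\alpha]}$ with the class $[\alpha_{j+1},\ldots,\alpha_{n-1},\alpha]$ computed in $\Ur$. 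Consequently a variable $x_j$ with $j\le n-1$ is $\div{J}$-multiplicative for $\bar u$ with respect to $\overline{[\alpha]}$ if and only if it is $\div{J}$-multiplicative for $\bar u x_n^{\alpha}$ with respect to $\Ur$.

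The heart of the argument is then a degree-in-$x_n$ bookkeeping, already present in the proof of Proposition~\ref{Proposition:UnicityJanetDivisor}: because $x_n$ is $\div{J}$-multiplicative for a monomial exactly when its $x_n$-degree equals $\deg_n(\Ur)$, any Janet divisor of a monomial $w$ with $\deg_n(w)\le\deg_n(\Ur)$ must itself have $x_n$-degree equal to $\deg_n(w)$, its multiplicative cofactor carrying no $x_n$. Using this I would split the non-multiplicative variables of each $u=\bar u x_n^{\alpha}\in\Ur$ into two kinds. For a non-multiplicative $x_j$ with $j\le n-1$, the hypothesis $ux_j\in\cone_\div{J}(\Ur)$ yields a Janet divisor lying in the same slice $[\alpha]$ (by the bookkeeping, since $\deg_n(ux_j)=\alpha$), and deleting the common factor $x_n^{\alpha}$ gives $\bar u x_j\in\cone_\div{J}(\overline{[\alpha]})$; thus the local condition transfers to each $\overline{[\alpha]}$, and the induction hypothesis makes every slice complete, which is condition (i) of the inductive characterisation. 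For the variable $x_n$, which is non-multiplicative for every $u$ in a non-top slice $[\alpha_i]$ with $i<k$, the hypothesis $\bar u x_n^{\alpha_i+1}\in\cone_\div{J}(\Ur)$ forces, by the same bookkeeping, a Janet divisor sitting in $\overline{[\alpha_i+1]}$, which is exactly the inclusion $\overline{[\alpha_i]}\subseteq\cone_\div{J}(\overline{[\alpha_i+1]})$, namely condition (ii). With (i) and (ii) established, the inductive characterisation stated above gives that $\Ur$ is complete.

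I expect the main obstacle to be precisely this $x_n$-degree bookkeeping together with the slice correspondence for multiplicative variables: one must verify carefully that a Janet divisor cannot land in the "wrong" $x_n$-degree, and that the $\div{J}$-multiplicative variables computed inside a slice $\overline{[\alpha]}$ genuinely coincide with those computed in $\Ur$. Once these two compatibilities are secured, translating the single local condition into the pair (i)--(ii) is routine, and --- importantly --- all the well-foundedness needed for the global conclusion is provided by the induction on $n$, so no separate termination argument chasing Janet divisors across arbitrary multiples is required.
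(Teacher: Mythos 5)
Your proof is correct and follows the route the paper itself takes: the paper presents this proposition as an immediate consequence of the inductive characterisation of completeness stated just before it (Janet, p.~19), and your argument is exactly that derivation, carried out by induction on the number of variables with the slice correspondence and the $x_n$-degree bookkeeping made explicit. The details you supply --- the trivial forward direction, the transfer of the local condition to the slices $\overline{[\alpha]}$ via the compatibility of multiplicative variables, and the identification of the $x_n$-prolongations with condition \textbf{(ii)} --- are all sound.
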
 

\subsubsection{Example, {\cite[p. 21]{Janet29}}}
\label{Example:JanetExampleB}
Consider the subset $\Ur=\{\,x_5x_4,x_5x_3,x_5x_2,x_4^2,x_4x_3,x_3^2\}$ of $\Mr(x_1,\ldots,x_5)$. The multiplicative variables are given by the following table
\begin{center}
\begin{tabular}{c|ccccc}
$x_5x_4$ & $x_5$ & $x_4$ & $x_3$ & $x_2$ & $x_1$\\
$x_5x_3$ & $x_5$ & &$x_3$ & $x_2$ & $x_1$\\
$x_5x_2$ & $x_5$ & &&$x_2$ & $x_1$\\
$x_4^2$ & & $x_4$ & $x_3$ & $x_2$ & $x_1$\\
$x_3x_4$ & & & $x_3$ & $x_2$ & $x_1$\\
$x_3^2$ & & & $x_3$ & $x_2$ & $x_1$\\
\end{tabular}
\end{center}
In order to prove that this set of monomials is complete, we apply Proposition~\ref{Proposition:Completeness}.  The completeness follows from the identities:
\begin{center}
\begin{tabular}{c}
$x_5x_3.x_4=x_5x_4.x_3$, \\
$x_5x_2.x_4=x_5x_4.x_2$,\; $x_5x_2.x_3=x_5x_3.x_2$, \\
$x_4^2.x_5=x_5x_4.x_4$, \\
$x_4x_3.x_5=x_5x_4.x_3$,\; $x_4x_3.x_4 = x_4^2.x_3$, \\
$x_3^2.x_5 = x_5x_3.x_3$,\; $x_3^2.x_4 = x_4x_3.x_3$.
\end{tabular}
\end{center}

\subsubsection{Examples}
For every $1\leq p\leq n$, the set of monomials of degree $p$ is complete.
Any finite set of monomials of degree $1$ is complete.

\begin{theorem}[Janet's Completion Lemma, {\cite[p. 21]{Janet29}}]
\label{Theorem:CompletionJanet}
For any finite set $\Ur$ of monomials of~$\Mr(x_1,\ldots,x_n)$ there exists a finite set~ $J(\Ur)$ satisfying the following three conditions:
\begin{enumerate}[{\bf i)}]
\item $J(\Ur)$ is complete,
\item $\Ur \subseteq J(\Ur)$,
\item $\cone(\Ur) = \cone(J(\Ur))$.
\end{enumerate}
\end{theorem}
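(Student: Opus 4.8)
The plan is to realize $J(\Ur)$ as the output of Janet's completion procedure, which repeatedly repairs the local defects of completeness detected by Proposition~\ref{Proposition:Completeness}. Concretely, I would maintain a current set $\Vr$, initialized to $\Ur$, and iterate as follows: as long as $\Vr$ is not complete, Proposition~\ref{Proposition:Completeness} supplies a monomial $u$ in $\Vr$ together with a non-multiplicative variable $x$ of $u$ with respect to $\Vr$ such that $ux\notin\cone_\div{J}(\Vr)$; I then adjoin $ux$ to $\Vr$ and repeat. If the loop halts, the returned set $J(\Ur)$ is complete by the stopping criterion, which is condition (i).

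Conditions (ii) and (iii) are preserved at every step, so they follow immediately once termination is in hand. Containment (ii) is clear because monomials are only ever added. For (iii), each adjoined monomial $ux$ is a multiple of $u\in\Vr\subseteq\cone(\Vr)$, hence $ux\in\cone(\Vr)$ and therefore $\cone(\Vr\cup\{ux\})=\cone(\Vr)$; chaining these equalities along the run from $\Ur$ to $J(\Ur)$ gives $\cone(\Ur)=\cone(J(\Ur))$.

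The main obstacle is termination, and this is where I would concentrate the argument. The crucial observation is that adjoining $ux$ through a \emph{non-multiplicative} variable $x$ can never increase any of the integers $\deg_i(\Vr)$. Indeed, if $x=x_n$ is non-multiplicative for $u$ then $\deg_n(u)<\deg_n(\Vr)$, so $\deg_n(ux_n)=\deg_n(u)+1\leq\deg_n(\Vr)$; and if $x=x_i$ with $i<n$ is non-multiplicative for $u$, writing $u\in[\alpha_{i+1},\ldots,\alpha_n]$ one has $\deg_i(u)<\deg_i([\alpha_{i+1},\ldots,\alpha_n])\leq\deg_i(\Vr)$, whence $\deg_i(ux_i)\leq\deg_i(\Vr)$, while all other degrees are unchanged. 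Consequently each $\deg_i(\Vr)$ remains equal to its initial value $\deg_i(\Ur)$ throughout, and every monomial that ever appears lies in the fixed finite box
\[
B = \{\, x_1^{\beta_1}\cdots x_n^{\beta_n} \;|\; 0\leq \beta_i \leq \deg_i(\Ur)\ \text{for}\ 1\leq i\leq n \,\}.
\]
Since $\Vr\subseteq\cone_\div{J}(\Vr)$ (each monomial $u=u\cdot 1$ lies in its own involutive cone, $1$ being the empty product of multiplicative variables) whereas the adjoined monomial satisfies $ux\notin\cone_\div{J}(\Vr)$, every step strictly enlarges $\Vr$ while keeping it inside $B$. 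The procedure therefore halts after at most $|B|$ steps.

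Finally I would note the one point that needs no separate verification: the stopping criterion genuinely coincides with completeness. This is exactly the content of Proposition~\ref{Proposition:Completeness}, which asserts that a set admitting no such defect pair $(u,x)$ is complete, so the loop condition already certifies $\cone(J(\Ur))=\cone_\div{J}(J(\Ur))$, and no additional work is required.
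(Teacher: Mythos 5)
Your proof is correct, but it takes a genuinely different route from the paper on the one point that matters. Both you and the paper realize $J(\Ur)$ as the output of Janet's completion procedure (Procedure~\ref{A:CompletionProcedure}), and the verification of conditions \textbf{(i)}--\textbf{(iii)} assuming termination is the same: the stopping criterion is exactly Proposition~\ref{Proposition:Completeness}, and the cone is preserved because only multiples of existing monomials are adjoined. The difference is termination. The paper does not prove it at this point; it defers to Section~\ref{InvolutiveCompletionAlgorithm}, where termination and correctness are obtained from the general Gerdt--Blinkov theory of involutive divisions --- the Janet division is noetherian, continuous and constructive, and one cites {\cite[Theorem 4.14]{GerdtBlinkovYuri98}} for the general completion procedure. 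You instead give a self-contained combinatorial argument specific to Janet's division: a non-multiplicative prolongation $ux_i$ never exceeds the coordinate-wise degree bounds of the current set (if $i=n$ because $\deg_n(u)<\deg_n(\Vr)$, and if $i<n$ because $\deg_i(u)<\deg_i([\alpha_{i+1},\ldots,\alpha_n])\leq\deg_i(\Vr)$), so all the sets produced stay inside the fixed finite box determined by $\deg_1(\Ur),\ldots,\deg_n(\Ur)$, while each step strictly enlarges the set since $\Vr\subseteq\cone_\div{J}(\Vr)$ and $ux\notin\cone_\div{J}(\Vr)$. What your approach buys is an elementary proof with an explicit bound $\prod_{i}(\deg_i(\Ur)+1)$ on the number of steps, essentially in-lining the noetherianity of the Janet division; what the paper's approach buys is generality, since the same argument applies verbatim to any constructive, continuous, noetherian involutive division rather than exploiting the degree-capping property peculiar to Janet's multiplicative variables.
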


\subsubsection{Completion procedure}
\label{Subsubsection:CompletionProcedure}
From Proposition~\ref{Proposition:Completeness}, M. Janet deduced the completion procedure ${\bf Complete}(\Ur)$, Procedure~\ref{A:CompletionProcedure}, that computes a completion of finite set of monomials $\Ur$, {\cite[p. 21]{Janet29}}. M.~Janet did not give a proof of the termination of this procedure. We will present a proof of the correction and termination of this procedure in Section~\ref{InvolutiveCompletionAlgorithm}.

\begin{algorithm}
\SetAlgoLined
\KwIn{$\Ur$ a finite set of monomials in $\Mr(x_1,\ldots,x_n)$}

\BlankLine

\KwOut{A finite set $J(\Ur)$ satisfying the condition of Theorem~\ref{Theorem:CompletionJanet}.}

\BlankLine

\Begin{

$\widetilde{\Ur} \leftarrow \Ur$

\BlankLine

\While{exists $u\in\widetilde{\Ur}$ and $x\in{\nonmult_{\div{J}}^{\widetilde{\Ur}}(u)}$ such that $ux$ is not in $\cone_\div{J}{(\widetilde{\Ur})}$}{

\BlankLine

{\bf Choose} such $u$ and $x$,

$\widetilde{\Ur} \leftarrow \widetilde{\Ur} \cup \{ux\}$.

}
}
\caption{${\bf Complete}(\Ur)$}
\label{A:CompletionProcedure}
\end{algorithm}

\subsubsection{Example, {\cite[p. 28]{Janet29}}}
\label{Example:JanetFinalCompletion}
Consider the set $\Ur=\{\,x_3x_2^2,x_3^3x_1^2\,\}$ of monomials of $\Mr(x_1,x_2,x_3)$ with the order $x_3>x_2>x_1$. The following table gives the multiplicative variables for each monomial:
\begin{center}
\begin{tabular}{c|ccc}
$x_3^3x_1^2$ & $x_3$ & $x_2$ & $x_1$\\
$x_3x_2^2$ &  & $x_2$ & $x_1$\\
\end{tabular}
\end{center}
We complete the set $\Ur$ as follows. The monomial $x_3x_2^2.x_3$ is not in $\cone_\div{J}(\Ur)$, we set $\widetilde{\Ur}\leftarrow \Ur\cup\{x_3^2x_2^2\}$ and we compute multiplicative variables with respect to $\widetilde{\Ur}$:
\begin{center}
\begin{tabular}{c|ccc}
$x_3^3x_1^2$ & $x_3$ & $x_2$ & $x_1$\\
$x_3^2x_2^2$ &  & $x_2$ & $x_1$\\
$x_3x_2^2$ &  & $x_2$ & $x_1$\\
\end{tabular}
\end{center}
The monomial $x_3x_2^2.x_3$ is in $\cone_\div{J}(\widetilde{\Ur})$ but $x_3^2x_2^2.x_3$ is not in $\cone_\div{J}(\widetilde{\Ur})$, then we set 
$\widetilde{\Ur} \leftarrow \widetilde{\Ur} \cup \{x_3^3x_2^2\}$. The multiplicative variable of this new set of monomials is
\begin{center}
\begin{tabular}{c|ccc}
$x_3^2x_2^2$ & $x_3$ & $x_2$ & $x_1$\\
$x_3^3x_1^2$ & $x_3$ &  & $x_1$\\
$x_3^2x_2^2$ &  & $x_2$ & $x_1$\\
$x_3x_2^2$ &  & $x_2$ & $x_1$\\
\end{tabular}
\end{center}
The monomial $x_3x_1^2.x_2$ is not in $\cone_\div{J}(\widetilde{\Ur})$, the other products are in $\cone_\div{J}(\widetilde{\Ur})$, and we prove that the system
\[
\widetilde{\Ur} =
\{\,x_3x_2^2,x_3^3x_1^2,x_3^3x_2^2,x_3^3x_2x_1^2,x_3^2x_2^2\,\}
\]
is complete.

\subsection{Inverse of derivation}
\label{SS:InverseDerivation}

In this subsection, we recall the results of M. Janet from \cite{Janet29} on solvability of monomial PDE systems of the form
\begin{equation}
\label{Equation:MonomialPDESystem}
(\Sigma)
\qquad
D^\alpha \varphi
=
f_{\alpha}(x_1,x_2,\ldots, x_n) \qquad \alpha \in \Nb^n, 
\end{equation}
where $\varphi$ is an unknown function and the $f_{\alpha}$ are analytic functions of several variables. As recalled in \ref{Subsection:HistoricalContext}, an infinite set of partial differential equations can be always reduced to a finite set of such equations. This is a consequence of Dickson's Lemma whose formulation due to M. Janet is given in Lemma~\ref{Lemma:Janet1}. By this reason, we can assume that the system $(\Sigma)$ is finite without loss of generality. Using Proposition~\ref{Proposition:IsomorphismPartialX}, M. Janet associated to each differential operator $D^\alpha$ a monomial $x^\alpha$ in $\Mr(x_1,\ldots,x_n)$.
In this way, to a PDE system $(\Sigma)$ on variables $x_1,\ldots,x_n$ he associated a finite set $\lm(\Sigma)$ of monomials. By Theorem~\ref{Theorem:CompletionJanet}, any such a set $\lm(\Sigma)$ of monomials can be completed into a finite complete set $J(\lm(\Sigma))$ having the same cone as $\lm(\Sigma)$.

\subsubsection{Computation of inverse of derivation}
\label{Subsubsection:CalculInverseDerivation}
Let us now assume that the set of monomials $\lm(\Sigma)$ is finite and complete. The cone of $\lm(\Sigma)$ being equal to the involutive cone of $\lm(\Sigma)$, for any monomial~$u$ in $\lm(\Sigma)$ and non-multiplicative variable $x_i$ in $\nonmult_\div{J}^{\lm(\Sigma)}(u)$, there exists a decomposition
\[
ux_i = vw,
\]
where $v$ is in $\lm(\Sigma)$ and $w$ belongs to $\Mr(\mult_\div{J}^{\lm(\Sigma)}(v))$.
For any such a decomposition, it corresponds a compatibility condition of the PDE system $(\Sigma)$, that is,  for $u=x^{\alpha}$, $v=x^{\beta}$ and $w=x^{\gamma}$
with $\alpha, \beta$ and $\gamma$ in $\Nb^n$, 
\[
\frac{\partial f_{\alpha}}{\partial x_i} 
=
D^\gamma f_{\beta}.
\]
Let us denote by $(C_\Sigma)$ the set of all such compatibility conditions.
M. Janet showed that with the completeness hypothesis this set of compatibility conditions is sufficient for the PDE system $(\Sigma)$ to be formally integrable in the sense of~\cite{Pommaret78}. 

\subsubsection{The space of initial conditions}
Let us consider the set $\comp{\lm(\Sigma)}$ of complementary monomials of the finite complete set~$\lm(\Sigma)$. Suppose that the PDE system $(\Sigma)$ satisfies the set $(C_\Sigma)$ of compatibility conditions. M. Janet associated to each monomial $v=x^{\beta}$ in $\comp{\lm(\Sigma)}$ with $\beta \in \Nb^n$ an analytic function 
\[
\varphi_{\beta}(x_{i_1},\ldots,x_{i_{k_v}}),
\]
where $\{x_{i_1},\ldots,x_{i_{k_v}}\}=\cmult_\div{J}^{\comp{\lm(\Sigma)}}(v)$. 
By Proposition~\ref{Proposition:PartitionMr}, the set of such analytic functions provides a compatible initial condition. Under these assumptions, M. Janet proved the following result.

\begin{theorem}[{\cite[p. 25]{Janet29}}]
\label{Theorem:BoundaryConditions}
Let $(\Sigma)$ be a finite monomial PDE system such that $\lm(\Sigma)$ is complete. If~$(\Sigma)$ satisfies the compatibility conditions $(C_\Sigma)$, then it always admits a unique solution with initial conditions given for any $v=x^{\beta}$ in $\comp{\lm(\Sigma)}$ with $\beta \in \Nb^n$ by 
\[
\left. D^\beta \varphi  \right|_{x_j=0 \; \forall x_j\in \cnonmult_\div{J}^{\comp{\lm(\Sigma)}}(v)}= 
\varphi_{\beta}(x_{i_1},\ldots,x_{i_{k_v}}),
\]
where $\{x_{i_1},\ldots,x_{i_{k_v}}\} = \cmult_\div{J}^{\comp{\lm(\Sigma)}}(v)$.
\end{theorem}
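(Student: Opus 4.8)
The plan is to build the solution as a power series about the centre of analyticity and to argue entirely at the level of Taylor coefficients, postponing analytic convergence to a final step. Writing $\varphi = \sum_{\gamma\in\Nb^n} c_\gamma\,x^\gamma$ with $c_\gamma = \frac{1}{\gamma!}D^\gamma\varphi(0)$, the coefficients are indexed by $\Mr(x_1,\ldots,x_n)$, which Proposition~\ref{Proposition:PartitionMr} splits as $\cone_\div{J}(\lm(\Sigma)) \amalg \comp{\cone}_\div{J}(\lm(\Sigma))$. The first piece collects the \emph{principal} coefficients, forced by $(\Sigma)$ and its differential consequences; the second collects the \emph{parametric} coefficients, which I will match to the prescribed data $\varphi_\beta$. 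Once every $c_\gamma$ is seen to be determined exactly once, uniqueness of the analytic solution is immediate and only existence (convergence) remains.

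\textbf{Principal coefficients.} Because $\lm(\Sigma)$ is complete, $\cone(\lm(\Sigma)) = \cone_\div{J}(\lm(\Sigma))$, so each principal monomial $x^\gamma$ has a \emph{unique} Janet divisor $x^\alpha\in\lm(\Sigma)$ by Proposition~\ref{Proposition:UnicityJanetDivisor}; writing $x^\gamma = x^\alpha x^\delta$ with $x^\delta\in\Mr(\mult_\div{J}^{\lm(\Sigma)}(x^\alpha))$, the solution is forced to satisfy $D^\gamma\varphi = D^\delta f_\alpha$, which fixes $c_\gamma$ upon evaluation at the origin. For this to define a genuine solution the family $(D^\gamma\varphi)_\gamma$ must be coherent under differentiation; when $x_i$ is $\div{J}$-multiplicative for $x^\alpha$ this is automatic, and the only relations left to verify are the crossings $x^\alpha x_i$ by non-multiplicative variables, which by completeness (Proposition~\ref{Proposition:Completeness}) all lie in $\cone_\div{J}(\lm(\Sigma))$ and whose coherence is precisely the content of the compatibility conditions $(C_\Sigma)$. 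I will therefore invoke the formal integrability already established in~\ref{Subsubsection:CalculInverseDerivation}, by which completeness together with $(C_\Sigma)$ renders the principal assignment consistent throughout $\cone_\div{J}(\lm(\Sigma))$.

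\textbf{Parametric coefficients.} For $x^\gamma$ in the complement $\comp{\cone}_\div{J}(\lm(\Sigma))$, Proposition~\ref{Proposition:PartitionMr} places it in the involutive cone of a \emph{unique} complementary monomial $v=x^\beta\in\comp{\lm(\Sigma)}$, so $x^\gamma = x^\beta x^\delta$ with $x^\delta$ supported on $\{x_{i_1},\ldots,x_{i_{k_v}}\} = \cmult_\div{J}^{\comp{\lm(\Sigma)}}(v)$. The prescribed condition fixes $D^\beta\varphi$, after setting $x_j=0$ for $x_j\in\cnonmult_\div{J}^{\comp{\lm(\Sigma)}}(v)$, equal to $\varphi_\beta(x_{i_1},\ldots,x_{i_{k_v}})$; differentiating by $x^\delta$, which involves only multiplicative variables, and evaluating at the origin reads $c_\gamma$ directly off a Taylor coefficient of $\varphi_\beta$. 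Since the complementary cones are pairwise disjoint and exhaust $\comp{\cone}_\div{J}(\lm(\Sigma))$, each parametric coefficient is determined once and only once, with no clash against the principal coefficients of the previous step. At this stage every $c_\gamma$ is fixed, and uniqueness follows.

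\textbf{Convergence --- the main obstacle.} What remains, and what I expect to be the hard part, is to show that the formal series $\sum_\gamma c_\gamma x^\gamma$ converges near the origin to an analytic function, thereby producing the solution. I would exploit that the right-hand sides $f_\alpha$ enter $(\Sigma)$ as given analytic data, so that the principal coefficients inherit Cauchy estimates of the shape $|c_\gamma|\leq C\,R^{-|\delta|}$ from the analyticity of the $f_\alpha$, while the parametric coefficients inherit analogous bounds from the $\varphi_\beta$. The delicate point is to make these estimates \emph{uniform} across the infinitely many involutive cones of the partition, so that a single polydisc of convergence serves for the whole series; this is naturally organized by Cauchy's method of majorants, dominating the completed system by an auxiliary one in Cauchy--Kowalevsky normal form and appealing to the Cauchy--Kowalevsky theorem recalled in~\ref{SSS:CauchyKowalevskyTheorem}. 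The coefficient bookkeeping of the preceding paragraphs is, by comparison, routine; it is the uniform majorant control that carries the analytic weight.
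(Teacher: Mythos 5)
Your uniqueness argument is correct and follows exactly the framework the paper builds around this theorem: Proposition~\ref{Proposition:PartitionMr} splits $\Mr(x_1,\ldots,x_n)$ into the involutive cone of $\lm(\Sigma)$ and the complementary cone, Proposition~\ref{Proposition:UnicityJanetDivisor} makes the principal assignment single-valued, and the parametric coefficients are read off the prescribed data $\varphi_\beta$. Note that the paper itself states this theorem with a citation to Janet's monograph and supplies no proof of its own (only illustrative examples), so there is no line-by-line comparison to make; your bookkeeping is the natural one and is consistent with the surrounding text.

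The genuine gap is the existence half, which you announce rather than prove, and your diagnosis of where the difficulty lies is off in both directions. First, the convergence problem you fear is much easier than you suggest: $\lm(\Sigma)$ and $\comp{\lm(\Sigma)}$ are \emph{finite} sets (finiteness of $\comp{\Ur}$ for finite $\Ur$ is immediate from the definition in~\ref{Subsubsection:ComplementaryMonomials}), and your own formulas show that each coefficient $c_\gamma$ equals, up to a factorial ratio $\delta!/\gamma!\leq 1$, a Taylor coefficient of one of the finitely many analytic functions $f_\alpha$, $\varphi_\beta$. Cauchy's estimates for this finite family already give $|c_\gamma|\leq C\,R^{-|\gamma|}$ on a common polydisc, so the series converges outright; no majorant comparison with a Cauchy--Kowalevsky normal form is needed, and none would be straightforward, since your initial data live on different coordinate subspaces rather than on a single hypersurface. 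Second, the step that actually carries the weight is the one you dismiss by invoking ``formal integrability'': to know that the constructed analytic function satisfies $D^\alpha\varphi=f_\alpha$ identically, you need coherence of the principal assignment, namely $D^\delta f_\alpha(0)=D^{\delta''}f_{\alpha''}(0)$ whenever $x^\alpha x^\delta=x^{\alpha''}x^{\delta''}$ with $x^{\alpha''}$ the Janet divisor of $x^{\alpha+\delta}$. The conditions $(C_\Sigma)$ give this only for one-step non-multiplicative prolongations $u x_i$, and the passage to arbitrary $\delta$ is not a simple induction on $|\delta|$: applying a compatibility relation replaces $D^\delta f_\alpha$ by $D^{\delta'+\gamma}f_\beta$, where the new exponent need not have smaller length, so degree does not decrease along the rewriting. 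One needs the termination argument based on the continuity of the Janet division --- the successive Janet divisors arising in the chain are pairwise distinct divisors of the fixed monomial $x^{\alpha+\delta}$, hence the chain is finite, exactly as in the local-to-global involutivity theorem of Section~\ref{InvolutiveCompletionAlgorithm}. That argument is the mathematical core of Janet's proof, and it is absent from yours; citing the paper's one-sentence remark on formal integrability in~\ref{Subsubsection:CalculInverseDerivation} amounts to citing the conclusion rather than proving it.
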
  
These  initial conditions are called \emph{initial conditions} by M. Janet. The method to obtain this initial conditions is illustrated by the two following examples.

\subsubsection{Example, {\cite[p. 26]{Janet29}}}
\label{Example:JanetExampleB2}
Consider the following monomial PDE system $(\Sigma)$ of unknown function $\varphi$ of variables $x_1,\ldots,x_5$:
\begin{align*}
\frac{\partial^2\varphi}{\partial x_5 \partial x_4}&= f_1(x_1,\ldots,x_5), &\;  \frac{\partial^2\varphi}{\partial x_5\partial x_3}&= f_2(x_1,\ldots,x_5), &\;
\frac{\partial^2\varphi}{\partial x_5\partial x_2}&= f_3(x_1,\ldots,x_5), \\ \frac{\partial^2\varphi}{\partial x_4^2}&= f_4(x_1,\ldots,x_5), &\;
\frac{\partial^2\varphi}{\partial x_4\partial x_3}&= f_5(x_1,\ldots,x_5),  &\; \frac{\partial^2\varphi}{\partial x_3^2}&= f_6(x_1,\ldots,x_5).\\
\end{align*}
The set $(C_\Sigma)$ of compatibility relations of the PDE system $(\Sigma)$ is a consequence of the identities used in Example~\ref{Example:JanetExampleB} to prove the completeness of the system:
\begin{center}
\begin{tabular}{c|c}
$x_5x_3.x_4=x_5x_4.x_3$, & $\frac{\partial f_2}{\partial x_2}=\frac{\partial f_1}{\partial x_3}$,\\
$x_5x_2.x_4=x_5x_4.x_2$,\; $x_5x_2.x_3=x_5x_3.x_2$, & $\frac{\partial f_3}{\partial x_4}=\frac{\partial f_1}{\partial x_2}$, $\frac{\partial f_3}{\partial x_3}=\frac{\partial f_2}{\partial x_2}$,\\
$x_4^2.x_5=x_5x_4.x_4$, &$\frac{\partial f_4}{\partial x_5}=\frac{\partial f_1}{\partial x_4}$,\\
$x_4x_3.x_5=x_5x_4.x_3$,\; $x_4x_3.x_4 = x_4^2.x_3$, & $\frac{\partial f_5}{\partial x_5}=\frac{\partial f_1}{\partial x_3}$,\; $\frac{\partial f_5}{\partial x_4}=\frac{\partial f_4}{\partial x_3}$, \\
$x_3^2.x_5 = x_5x_3.x_3$,\; $x_3^2.x_4 = x_4x_3.x_3$, &$\frac{\partial f_6}{\partial x_5}=\frac{\partial f_2}{\partial x_3}$,\; $\frac{\partial f_6}{\partial x_4}=\frac{\partial f_5}{\partial x_3}$.\\
\end{tabular}
\end{center}
The initial conditions are obtained using the multiplicative variables of the set $\comp{\lm(\Sigma)}$ of complementary monomials of $\lm(\Sigma)$.
We have
\[
\compp{5}{\lm(\Sigma)} = \compp{4}{\lm(\Sigma)} = \compp{1}{\lm(\Sigma)} = \emptyset,
\quad 
\compp{3}{\lm(\Sigma)} = \{ 1,x_3,x_4\},
\quad 
\compp{2}{\lm(\Sigma)} = \{ x_5\}.
\]
The multiplicative variables of these monomials are given by the following table
\begin{center}
\begin{tabular}{c|c}
$1$, $x_3$, $x_4$ & $x_1$, $x_2$,\\
$x_5$ & $x_1$, $x_5$.\\
\end{tabular}
\end{center}
By Theorem \ref{Theorem:BoundaryConditions}, the PDE system $(\Sigma)$ admits always a unique solution with any given initial conditions of the following type
\begin{align*}
\left.\frac{\partial \varphi}{\partial x_4}\right|_{x_3=x_4=x_5=0} &= \varphi_{0,0,0,1,0}(x_1,x_2)\\
\left.\frac{\partial \varphi}{\partial x_3}\right|_{x_3=x_4=x_5=0} &= \varphi_{0,0,1,0,0}(x_1,x_2)\\
\left. \varphi\right|_{x_3=x_4=x_5=0} &= \varphi_{0,0,0,0,0}(x_1,x_2)\\
\left.\frac{\partial \varphi}{\partial x_5}\right|_{x_2=x_3=x_4=0} &= \varphi_{0,0,0,0,1}(x_1,x_5).\\
\end{align*}

\subsubsection{Example}
In a last example, M. Janet considered a monomial PDE system where the partial derivatives of the left hand side do not form a complete set of monomials. It is the PDE system $(\Sigma)$ of unknown function $\varphi$ of variables~$x_1,x_2,x_3$ given by
\[
\frac{\partial^3\varphi}{\partial x_2^2 \partial x_3}= f_1(x_1,x_2,x_3),
\qquad
\frac{\partial^5\varphi}{\partial x_1^2 \partial x_3^3}= f_2(x_1,x_2,x_3).
\]
We consider the set of monomials $\lm(\Sigma)=\{x_3x_2^2,x_3^3x_1^2\}$.
In Example~\ref{Example:JanetFinalCompletion}, we complete $\lm(\Sigma)$ into the following complete set of monomials
\[
J(\lm(\Sigma)) =
\{\,x_3x_2^2,x_3^3x_1^2,x_3^3x_2^2,x_3^3x_2x_1^2,x_3^2x_2^2\,\}.
\]
The complementary set of monomials are
\[
\compp{3}{J(\lm(\Sigma))} = \{1\},
\quad 
\compp{2}{J(\lm(\Sigma))} = \{x_3^2x_2,x_3^2,x_3x_2,x_3\},
\quad 
\compp{1}{J(\lm(\Sigma))} = \{ x_3^3x_2x_1, x_3^3x_2, x_3^3x_1,x_3^3\}.
\]
The multiplicative variables of these monomials are given by the following table
\begin{center}
\begin{tabular}{c|c}
$\compp{3}{J(\lm(\Sigma))}$ & $x_1$, $x_2$,\\
$\compp{2}{J(\lm(\Sigma))}$ & $x_1$.\\
$\compp{1}{J(\lm(\Sigma))}$ & $x_3$.\\
\end{tabular}
\end{center}
By Theorem \ref{Theorem:BoundaryConditions}, the PDE system $(\Sigma)$ admits always a unique solution with any given initial conditions of the following type
\[
\left.\varphi\right|_{x_3=0} 
= \varphi_{0,0,0}(x_1,x_2)
,\quad
\left.\frac{\partial \varphi}{\partial x_3}\right|_{x_2=x_3=0} 
= \varphi_{0,0,1}(x_1)
,\quad
\left.\frac{\partial^2 \varphi}{\partial x_3\partial x_2}\right|_{x_2=x_3=0} 
= \varphi_{0,1,1}(x_1)
\]
\[
\left.\frac{\partial^2 \varphi}{\partial x_3^2}\right|_{x_2=x_3=0} 
= \varphi_{0,0,2}(x_1)
,\quad
\left.\frac{\partial^3 \varphi}{\partial x_3^2\partial x_2}\right|_{x_2=x_3=0} 
= \varphi_{0,1,2}(x_1)
,\quad
\left.\frac{\partial^3 \varphi}{\partial x_3^3}\right|_{x_1=x_2=0} 
= \varphi_{0,0,3}(x_3),
\]
\[
\left.\frac{\partial^4 \varphi}{\partial x_3^3\partial x_1}\right|_{x_1=x_2=0} 
= \varphi_{1,0,3}(x_3)
,\quad
\left.\frac{\partial^4 \varphi}{\partial x_3^3\partial x_2}\right|_{x_1=x_2=0} 
= \varphi_{0,1,3}(x_3)
,\quad
\left.\frac{\partial^5 \varphi}{\partial x_3^3\partial x_2\partial x_1}\right|_{x_1=x_2=0} 
= \varphi_{1,1,3}(x_3).
\]

\section{Monomial involutive bases}
\label{Section:InvolutiveDivision}

In this section, we recall a general approach of involutive monomial divisions introduced by V. P. Gerdt in \cite{Gerdt97}, see also \cite{GerdtBlinkovYuri98,GerdtBlinkovYuri98b}. In particular, we give the axiomatic properties of an involutive division. The partition of variables into multiplicative and non-multiplicative can be deduced from this axiomatic. In this way, we explain how the notion of multiplicative variable in the sense of Janet can be deduced from a particular involutive division.

\subsection{Involutive division}

\subsubsection{Involutive division}
\label{Definition:InvolutiveDivision}
An \emph{involutive division} $\div{I}$ on the set of monomials $\Mr(x_1,\ldots,x_n)$ is defined by a relation $|_\div{I}^\Ur$ in~$\Ur\times \Mr(x_1,\ldots,x_n)$, for every subset $\Ur$ of $\Mr(x_1,\ldots,x_n)$, satisfying, for all monomials $u$, $u'$ in $\Ur$ and $v$, $w$ in $\Mr(x_1,\ldots,x_n)$, the following six conditions 
\begin{enumerate}[{\bf i)}]
\item $u |_\div{I}^\Ur w$ implies $u|w$,
\item $u|_\div{I}^\Ur u$, for all $u$ in $\Ur$,
\item $u|_\div{I}^\Ur uv$ and $u |_\div{I}^\Ur uw$ if and only if $u |_\div{I}^\Ur uvw$,
\item if $u|_\div{I}^\Ur w$ and $u'|_\div{I}^\Ur w$, then $u|_\div{I}^\Ur u'$ or $u'|_\div{I}^\Ur u$,
\item if $u|_\div{I}^\Ur u'$ and $u'|_\div{I}^\Ur w$, then $u|_\div{I}^\Ur w$,
\item if $\Ur' \subseteq \Ur$ and $u\in \Ur'$, then $u |_\div{I}^\Ur w$ implies $u|_\div{I}^{\Ur'}w$.
\end{enumerate}
When no confusion is possible, the relation $|_\div{I}^\Ur$ will be also denoted by $|_\div{I}$.

\subsubsection{Multiplicative monomial}
If $u|_\div{I}^\Ur w$, by {\bf i)} there exists a monomial $v$ such that $w=uv$. We say that $u$ is an \emph{$\div{I}$-involutive divisor} of $w$, $w$ is an \emph{$\div{I}$-involutive multiple} of $u$ and $v$ is \emph{$\div{I}$-multiplicative} for $u$ with respect to $\Ur$.
When the monomial $uv$ is not an involutive multiple of $u$ with respect to $\Ur$, we say that $v$ is \emph{$\div{I}$-non-multiplicative} for $u$ with respect to $\Ur$.

We define in a same way the notion of multiplicative (resp. non-multiplicative) variable.
We denote by $\mult_\div{I}^{\Ur}(u)$ (resp. $\nonmult_\div{I}^\Ur(u)$) the set of multiplicative (resp. non-multiplicative) variables for the division $\div{I}$ of a monomial $u$ with respect to $\Ur$. We have 
\[
\mult_\div{I}^{\Ur}(u) = \{\;x\in\{x_1,\ldots,x_n\}\;\big|\; u |_\div{I}^\Ur ux \;\}
\]
and thus a partition of the set of variables $\{\,x_1,\ldots,x_n\,\}$ into sets of multiplicative and non-multiplicative variables.
An involutive division $\div{I}$ is thus entirely defined by a partition
\[
\{x_1,\ldots,x_n\} 
= \mult_\div{I}^{\Ur}(u) \sqcup \nonmult_\div{I}^\Ur(u),
\]
for any finite subset $\Ur$ of $\Mr(x_1,\ldots,x_n)$ and any $u$ in $\Ur$, satisfying conditions ${\bf iv)}$, ${\bf v)}$ and ${\bf vi)}$ of Definition~\ref{Definition:InvolutiveDivision}.
The involutive division $\div{I}$ is then defined by setting $u\mid_\div{I}^\Ur w$ if $w=uv$ and the monomial$v$ belongs to $\Mr(\mult_\div{I}^{\Ur}(u))$. Conditions ${\bf i)}$, ${\bf ii)}$ and ${\bf iii)}$ of Definition \ref{Definition:InvolutiveDivision} are consequence of this definition.

\subsubsection{Example}
\label{Example:simpliste}
Consider $\Ur=\{x_1,x_2\}$ in $\Mr(x_1,x_2)$ and suppose that $\div{I}$ is an involutive division such that 
$\mult_\div{I}^{\Ur}(x_1)=\{x_1\}$ and $\mult_\div{I}^\Ur(x_2)=\{x_2\}$. Then we have
\[
x_1\nmid_\div{I} x_1x_2,
\quad\text{and}\quad
x_2\nmid_\div{I} x_1x_2.
\]

\subsubsection{Autoreduction}
\label{Subsubsection:MonomialAutoreduction}
A subset $\Ur$ of $\Mr(x_1,\ldots,x_n)$ is said to be \emph{autoreduced} with respect to an involutive division $\div{I}$, or \emph{$\div{I}$-autoreduced}, if it does not contain a monomial $\div{I}$-divisible by another monomial of $\Ur$.

In particular, by definition of the involutive division, for any monomials $u,u'$ in $\Ur$ and monomial~$w$ in~$\Mr(x_1,\ldots,x_n)$, we have $u|_\div{I}w$ and $u'|_\div{I} w$ implies $u|_\div{I}u'$ or $u'|_\div{I}u$. As a consequence, if a set of monomials $\Ur$ is $\div{I}$-autoreduced, then any monomial in $\Mr(x_1,\ldots,x_n)$ admits at most one $\div{I}$-involutive divisor in $\Ur$.

\subsubsection{The Janet division}
We call \emph{Janet division} the division on $\Mr(x_1,\ldots,x_n)$ defined by the multiplicative variables in the sense of Janet defined in \ref{Subsection:JanetMultiplicativeVariables}. Explicitely, for a subset $\Ur$ of $\Mr(x_1,\ldots,x_n)$ and monomials $u$ in $\Ur$ and $w$ in $\Mr(x_1,\ldots,x_n)$, we define $u|_\div{J}^\Ur w$ if $u$ is a Janet divisor of $w$ as defined in \ref{Definition:JanetDivisor}, that is $w=uv$, where $v\in \Mr(\mult_\div{J}^\Ur(u))$ and $\mult_\div{J}^\Ur(u)$ is the set of Janet's multiplicative variables defined in~\ref{Subsection:JanetMultiplicativeVariables}.

By Proposition~\ref{Proposition:UnicityJanetDivisor}, for a fixed subset of monomial $\Ur$, any monomial of $\Mr(x_1,\ldots,x_n)$ has a unique Janet divisor in $\Ur$ with respect to $\Ur$. As a consequence, the conditions {\bf iv)} and {\bf v)} of Definition\ref{Definition:InvolutiveDivision} trivially hold for the Janet division. 
Now suppose that $\Ur' \subseteq \Ur$ and $u$ is a monomial in $\Ur'$. If $u |_\div{J}^\Ur w$ there is a decomposition $w=uv$ with $v\in \Mr(\mult_\div{J}^\Ur(u))$. As $\mult_\div{J}^{\Ur}(u) \subseteq \mult_\div{J}^{\Ur'}(u)$, this implies that $u|_\div{J}^{\Ur'}w$. Hence, the conditions {\bf vi)} of Definition \ref{Definition:InvolutiveDivision} holds for the Janet division. 
We have thus proved

\begin{proposition}[{\cite[Proposition 3.6]{GerdtBlinkovYuri98}}]
The Janet division is involutive.
\end{proposition}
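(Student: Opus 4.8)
The goal is to verify that Janet division satisfies the six axioms (i)--(vi) of Definition~\ref{Definition:InvolutiveDivision}. The plan is to recall that the Janet division is defined through a partition of variables into multiplicative and non-multiplicative ones, via $\mult_\div{J}^\Ur(u)$, and then to invoke the reduction already recorded after Definition~\ref{Definition:InvolutiveDivision}: an involutive division is entirely determined by such a partition provided axioms \textbf{iv)}, \textbf{v)} and \textbf{vi)} hold, the remaining axioms \textbf{i)}, \textbf{ii)}, \textbf{iii)} being automatic consequences of setting $u\mid_\div{J}^\Ur w$ iff $w=uv$ with $v\in\Mr(\mult_\div{J}^\Ur(u))$. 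Thus the work reduces to checking \textbf{iv)}, \textbf{v)} and \textbf{vi)}.

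First I would dispatch \textbf{iv)} and \textbf{v)} simultaneously using Proposition~\ref{Proposition:UnicityJanetDivisor}, which guarantees that every monomial $w$ has \emph{at most one} Janet divisor in $\Ur$ with respect to $\Ur$. For \textbf{iv)}, if both $u$ and $u'$ are Janet divisors of $w$, then uniqueness forces $u=u'$, so in particular $u\mid_\div{J}^\Ur u'$ holds trivially. For \textbf{v)} (transitivity), if $u\mid_\div{J}^\Ur u'$ and $u'\mid_\div{J}^\Ur w$, one checks that $u$ is itself a Janet divisor of $w$; again uniqueness of the Janet divisor of $w$ makes this immediate, since $u$ divides $w$ through a chain of multiplicative variables. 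These two axioms therefore follow formally from the unicity proposition with essentially no computation.

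The substantive step is \textbf{vi)}, the \emph{restriction} axiom: if $\Ur'\subseteq\Ur$ and $u\in\Ur'$, then $u\mid_\div{J}^\Ur w$ implies $u\mid_\div{J}^{\Ur'} w$. The key observation is the monotonicity of Janet's multiplicative variables under passage to a subset, namely $\mult_\div{J}^{\Ur}(u)\subseteq\mult_\div{J}^{\Ur'}(u)$ whenever $\Ur'\subseteq\Ur$ and $u\in\Ur'$. Granting this inclusion, if $u\mid_\div{J}^\Ur w$ there is a decomposition $w=uv$ with $v\in\Mr(\mult_\div{J}^\Ur(u))$; since $\mult_\div{J}^\Ur(u)\subseteq\mult_\div{J}^{\Ur'}(u)$ we get $v\in\Mr(\mult_\div{J}^{\Ur'}(u))$, whence $u\mid_\div{J}^{\Ur'}w$, as required.

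The main obstacle, and the only place where a real argument is needed, is justifying the inclusion $\mult_\div{J}^{\Ur}(u)\subseteq\mult_\div{J}^{\Ur'}(u)$. This is where I would spend the effort: unwinding the definition in~\ref{Subsection:JanetMultiplicativeVariables}, a variable $x_i$ is $\div{J}$-multiplicative for $u$ exactly when $\deg_i(u)=\deg_i([\alpha_{i+1},\ldots,\alpha_n])$ (with the analogous top-degree condition for $x_n$). Passing from $\Ur$ to a subset $\Ur'$ can only \emph{shrink} the classes $[\alpha_{i+1},\ldots,\alpha_n]$ and correspondingly can only decrease the maximal degrees $\deg_i([\alpha_{i+1},\ldots,\alpha_n])$ and $\deg_n(\Ur)$; since $u$ still lies in the corresponding class of $\Ur'$, an equality achieving the maximum over $\Ur$ continues to achieve the (no larger) maximum over $\Ur'$. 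Hence any variable multiplicative with respect to $\Ur$ remains multiplicative with respect to $\Ur'$, giving the inclusion and completing the verification of \textbf{vi)} and thus of the proposition.
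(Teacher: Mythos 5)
Your proof is correct and follows essentially the same route as the paper: conditions \textbf{i)}--\textbf{iii)} are automatic from the definition of the division by a partition of variables, conditions \textbf{iv)} and \textbf{v)} are reduced to Proposition~\ref{Proposition:UnicityJanetDivisor}, and condition \textbf{vi)} follows from the inclusion $\mult_\div{J}^{\Ur}(u)\subseteq\mult_\div{J}^{\Ur'}(u)$, which you moreover justify by the degree-sandwiching argument where the paper merely asserts it. One small repair in your treatment of \textbf{v)}: uniqueness must be applied to $u'$ rather than to $w$ --- since $u\mid_\div{J}^\Ur u'$ and $u'\mid_\div{J}^\Ur u'$ by reflexivity, both $u$ and $u'$ are Janet divisors of $u'$, hence $u=u'$ and transitivity is immediate. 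As you phrase it, invoking ``uniqueness of the Janet divisor of $w$'' presupposes that $u$ is already known to be a Janet divisor of $w$, which is exactly what is to be shown; indeed, applied literally to $w$, uniqueness would only tell you that if $u\neq u'$ then $u$ is \emph{not} a Janet divisor of $w$.
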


\subsection{Involutive completion procedure}
\label{InvolutiveCompletionAlgorithm}

\subsubsection{Involutive set}
\label{SSS:InvolutiveSet}
Let $\div{I}$ be an involutive division on $\Mr(x_1,\ldots,x_n)$ and let $\Ur$ be a set of monomials.
The \emph{involutive cone} of a monomial $u$ in $\Ur$ with respect to the involutive division~$\div{I}$ is defined by
\[
\cone_\div{I}(u,\Ur) = \{\;uv \;\big|\; v\in \Mr(x_1,\ldots,x_n)\; \text{and} \; u |_\div{I}^\Ur uv \;\}.
\]
The \emph{involutive cone} of $\Ur$ with respect to the involutive division $\div{I}$ is the following subset of monomials:
\[
\cone_\div{I}(\Ur) = \bigcup_{u\in \Ur} \cone_\div{I}(u,\Ur).
\]
Note that the inclusion $\cone_\div{I}(\Ur) \subseteq \cone(\Ur)$ holds for any set $\Ur$. Note also that when the set $\Ur$ is $\div{I}$-autoreduced, by involutivity this union is disjoint.

A subset $\Ur$ of $\Mr(x_1,\ldots,x_n)$ is \emph{$\div{I}$-involutive} if the following equality holds
\[
\cone(\Ur)=\cone_\div{I}(\Ur).
\]
In other words, a set $\Ur$ is $\div{I}$-involutive if any multiple of an element $u$ in $\Ur$ is also $\div{I}$-involutive multiple of an element $v$ of $\Ur$. Note that the monomial $v$ can be different from the monomial $u$, as we have seen in Example~\ref{Example:JanetExampleB}.

\subsubsection{Involutive completion}
A \emph{completion} of a subset $\Ur$ of monomials of $\Mr(x_1,\ldots,x_n)$ with respect to an involutive division $\div{I}$, or \emph{$\div{I}$-completion} for short, is a set of monomials $\widetilde{\Ur}$ satisfying the following three conditions
\begin{enumerate}[{\bf i)}]
\item $\widetilde{\Ur}$ is involutive,
\item $\Ur \subseteq \widetilde{\Ur}$, 
\item $\cone(\widetilde{\Ur})=\cone(\Ur)$.
\end{enumerate}

\subsubsection{Noetherianity}
\label{SSS:Noetherianity}
An involutive division $\div{I}$ is said to be \emph{noetherian} if all finite subset $\Ur$ of~$\Mr(x_1,\ldots,x_n)$ admits a finite $\div{I}$-completion~$\widetilde{\Ur}$.

\begin{proposition}[{\cite[Proposition 4.5]{GerdtBlinkovYuri98}}]
The Janet division is noetherian.
\end{proposition}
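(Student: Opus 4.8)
The plan is to reduce the statement to Janet's Completion Lemma (Theorem~\ref{Theorem:CompletionJanet}), which already produces, for any finite $\Ur$, a finite set $J(\Ur)$ that is complete, contains $\Ur$, and has the same cone as $\Ur$. The only thing that needs checking is that Janet's notion of \emph{completeness} coincides with the notion of being \emph{$\div{J}$-involutive} in the axiomatic sense introduced in~\ref{SSS:InvolutiveSet}, so that $J(\Ur)$ qualifies as a finite $\div{J}$-completion in the sense of the definition of noetherianity.

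First I would verify that the two involutive cones agree. For the Janet division $\div{J}$, the relation $u|_\div{J}^\Ur uv$ holds precisely when $v\in\Mr(\mult_\div{J}^\Ur(u))$, so the involutive cone $\cone_\div{J}(u,\Ur)=\{\,uv\mid u|_\div{J}^\Ur uv\,\}$ of~\ref{SSS:InvolutiveSet} reduces term by term to $\{\,uv\mid v\in\Mr(\mult_\div{J}^\Ur(u))\,\}$, which is exactly the involutive cone attached to $u$ in Janet's sense in~\ref{Subsection:CompleteSystem}. Taking the union over $u\in\Ur$ gives the same set $\cone_\div{J}(\Ur)$ in both frameworks. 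Consequently the equality $\cone(\Ur)=\cone_\div{J}(\Ur)$ defining Janet completeness is literally the same as the equality defining $\div{J}$-involutivity, and the two notions coincide.

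With this identification in hand, I would simply apply Theorem~\ref{Theorem:CompletionJanet}: given a finite subset $\Ur$ of $\Mr(x_1,\ldots,x_n)$, set $\widetilde{\Ur}:=J(\Ur)$. Then $\widetilde{\Ur}$ is complete, hence $\div{J}$-involutive; it satisfies $\Ur\subseteq\widetilde{\Ur}$; and it satisfies $\cone(\widetilde{\Ur})=\cone(\Ur)$. These are precisely the three conditions required of a $\div{J}$-completion, and $\widetilde{\Ur}$ is finite. Since $\Ur$ was an arbitrary finite set, every finite subset of $\Mr(x_1,\ldots,x_n)$ admits a finite $\div{J}$-completion, which is exactly the assertion that the Janet division is noetherian.

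The proposition is therefore essentially a repackaging of the Completion Lemma, and the genuine mathematical content---that Janet's completion procedure $\complete(\Ur)$ terminates after finitely many steps---is already absorbed into Theorem~\ref{Theorem:CompletionJanet}. I expect the only delicate point to be the bookkeeping in matching the two definitions of completeness through the coincidence of involutive cones; no further estimate or induction is needed, and in particular one need not arrange for $\widetilde{\Ur}$ to be $\div{J}$-autoreduced, since autoreduction is not part of the definition of a $\div{J}$-completion.
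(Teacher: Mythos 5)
Your reduction has one sound ingredient: for the Janet division, the axiomatic involutive cone of~\ref{SSS:InvolutiveSet} and Janet's involutive cone of~\ref{Subsection:CompleteSystem} do coincide, since $u|_\div{J}^\Ur uv$ holds precisely when $v\in\Mr(\mult_\div{J}^\Ur(u))$; hence ``complete'' and ``$\div{J}$-involutive'' are the same notion, and a finite complete superset of $\Ur$ with the same cone is exactly a finite $\div{J}$-completion. If Theorem~\ref{Theorem:CompletionJanet} is granted as a black box, your deduction is formally valid.

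The gap is that this inverts the logical order of the theory and, inside this paper, is circular. The paper does not prove the proposition itself; it cites \cite[Proposition 4.5]{GerdtBlinkovYuri98}, where noetherianity is obtained by a direct construction. Theorem~\ref{Theorem:CompletionJanet}, by contrast, is (modulo your cone identification) the very statement to be proved, and the paper stresses in~\ref{Subsubsection:CompletionProcedure} that Janet gave no termination proof for the procedure behind it, deferring that proof to Section~\ref{InvolutiveCompletionAlgorithm}, i.e.\ to Gerdt--Blinkov's \cite[Theorem 4.14]{GerdtBlinkovYuri98} for continuous constructive divisions. But continuity and constructivity alone do not force termination: the Pommaret division recalled in~\ref{SSSPommaretDivision} is continuous and constructive yet not noetherian, and the completion procedure cannot terminate on a set admitting no finite Pommaret completion. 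Termination of the completion procedure requires knowing beforehand that a finite completion exists---precisely noetherianity---so you are deriving noetherianity from a theorem whose justification presupposes it. The non-circular argument is short and explicit: given a finite set $\Ur$, put
\[
\widetilde{\Ur}=\{\,w\in\cone(\Ur)\;\big|\;\deg_i(w)\leq\deg_i(\Ur)\;\text{for all}\;1\leq i\leq n\,\},
\]
a finite set containing $\Ur$ with $\cone(\widetilde{\Ur})=\cone(\Ur)$. For any $w$ in $\cone(\Ur)$, the monomial $u$ defined by $\deg_i(u)=\min(\deg_i(w),\deg_i(\Ur))$ is a multiple of any $u'\in\Ur$ dividing $w$, hence lies in $\widetilde{\Ur}$; moreover every variable $x_i$ occurring in $w/u$ satisfies $\deg_i(u)=\deg_i(\Ur)=\deg_i(\widetilde{\Ur})$, so it is $\div{T}$-multiplicative, hence $\div{J}$-multiplicative for $u$ with respect to $\widetilde{\Ur}$ by the refinement property recalled in~\ref{SSS:ThomasDivision}. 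Thus $w\in\cone_\div{J}(\widetilde{\Ur})$, so $\widetilde{\Ur}$ is a finite $\div{J}$-completion of $\Ur$, which is what noetherianity demands.
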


\subsubsection{Prolongation}
Let $\Ur$ be a subset of $\Mr(x_1,\ldots,x_n)$. We call \emph{prolongation} of an element $u$ of $\Ur$ a  multiplication of $u$ by a variable $x$. 
Given an involutive division $\div{I}$, a prolongation $ux$ is \emph{multiplicative} (resp. \emph{non-multiplicative}) if $x$ is a multiplicative (resp. non-multiplicative) variable.

\subsubsection{Local involutivity}
A subset $\Ur$ of $\Mr(x_1,\ldots,x_n)$ is \emph{locally involutive with respect to an involutive division $\div{I}$} if any non-multiplicative prolongation of an element of $\Ur$ admit an involutive divisor in $\Ur$. That is
\[
\forall u \in \Ur
\quad 
\forall x_i \in \nonmult_\div{I}^\Ur(u)
\quad
\exists v \in \Ur
\quad\text{such that}\quad
v|_\div{I} ux_i.
\]

\subsubsection{Example,~{\cite[Example 4.8]{GerdtBlinkovYuri98}}}
\label{Exemple:Involutiflocalvsglobal}
By definition, if $\Ur$ is $\div{I}$-involutive, then it is locally $\div{I}$-involutive. The converse is false in general. Indeed, consider the involutive division $\div{I}$ on $\Mr=\Mr(x_1,x_2,x_3)$ defined by
\[
\mult_\div{I}^\Mr(x_1) = \{x_1,x_3\},\quad
\mult_\div{I}^\Mr(x_2) = \{x_1,x_2\},\quad
\mult_\div{I}^\Mr(x_3) = \{x_2,x_3\},
\]
with $\mult_\div{I}^\Mr(1) = \{x_1,x_2,x_3\}$ and $\mult_\div{I}^\Mr(u)$ is empty for $\deg(u)\geq 2$.
Then the set $\{x_1,x_2,x_3\}$ is locally $\div{I}$-involutive but not $\div{I}$-involutive.

\subsubsection{Continuity}
An involutive division $\div{I}$ is  \emph{continuous} if for all finite subset $\Ur$ of $\Mr(x_1,\ldots,x_n)$ and any finite sequence $(u_1,\ldots,u_k)$ of elements in $\Ur$ such that, there exists $x_{i_j}$ in $\nonmult_\div{I}^\Ur(u_j)$ such that
\[
u_{k} |_\div{I} u_{k-1}x_{i_{k-1}},
\;
\ldots
\;
, u_{3} |_\div{I} u_2x_{i_2},
\;
u_{2} |_\div{I} u_1x_{i_1},
\]
then $u_i\neq u_j$, for any $i\neq j$. 

For instance, the involutive division in Example~\ref{Exemple:Involutiflocalvsglobal} is not continuous. Indeed, there exists the following cycle of divisions:
\[
x_2|_\div{I} x_1x_2,
\quad
x_1|_\div{I} x_3x_1,
\quad
x_3|_\div{I} x_2x_3,
\quad
x_2|_\div{I} x_1x_2.
\]

\subsubsection{From local to global involutivity}
Any $\div{I}$-involutive subset $\Ur$ of $\Mr(x_1,\ldots,x_n)$ is locally $\div{I}$-involutive. When the division $\div{I}$ is continuous the converse is also true. Indeed, suppose that $\Ur$ is locally $\div{I}$-involutive. Let us show that $\Ur$ is $\div{I}$-involutive when the division $\div{I}$ is continuous.

Given a monomial $u$ in $\Ur$ and a monomial $w$ in $\Mr(x_1,\ldots,x_n)$, let us show that the monomial~$uw$ admits an $\div{I}$-involutive divisor in $\Ur$.
If $u|_\div{I} uw$ the claim is proved. Otherwise, there exists a non-multiplicative variable $x_{k_1}$ in $\nonmult_\div{I}^\Ur(u)$ such that $x_{k_1} | w$. By local involutivity, the monomial $ux_{k_1}$ admits an $\div{I}$-involutive divisor $v_1$ in $\Ur$.
If $v_1|_\div{I} uw$ the claim is proved. Otherwise, there exists a non-multiplicative variable $x_{k_2}$ in $\nonmult_\div{I}^\Ur(v_1)$ such that $x_{k_2}$ divides $\frac{uw}{v_1}$. By local involutivity, the monomial~$v_1x_{k_2}$ admits an $\div{I}$-involutive divisor $v_2$ in $\Ur$.

In this way, we construct a sequence $(u,v_1,v_2,\ldots)$ of monomials in $\Ur$ such that
\[
v_1|_\div{I} ux_{k_1},
\quad
v_2|_\div{I} v_1x_{k_2},
\quad
v_3|_\div{I} v_2x_{k_3},
\quad
\ldots
\]
By continuity hypothesis, all monomials $v_1,v_2,\ldots$ are distinct. Moreover, all these monomials are divisor of $uw$, that admits a finite set of distinct divisors. As a consequence, previous sequence is finite. It follows, that its last term $v_k$ is an $\div{I}$-involutive monomial of $uw$.
We have thus proved the following result.

\begin{theorem}[{\cite[Theorem 4.10]{GerdtBlinkovYuri98}}]
Let $\div{I}$ be a continuous involutive division. A subset of monomials of $\Mr(x_1,\ldots,x_n)$ is locally $\div{I}$-involutive if and only if it is $\div{I}$-involutive.
\end{theorem}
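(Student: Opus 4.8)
The plan is to prove the two directions of the equivalence, noting that the forward direction is immediate while the reverse direction requires the continuity hypothesis. The statement to prove is that, for a continuous involutive division $\div{I}$, a subset $\Ur$ of $\Mr(x_1,\ldots,x_n)$ is locally $\div{I}$-involutive if and only if it is $\div{I}$-involutive. The forward implication is trivial: if $\Ur$ is $\div{I}$-involutive, then by definition $\cone(\Ur)=\cone_\div{I}(\Ur)$, so in particular every non-multiplicative prolongation $ux_i$, being a multiple of $u$, lies in the involutive cone and hence admits an $\div{I}$-involutive divisor in $\Ur$; this is exactly local involutivity. The content of the theorem lies entirely in the converse.

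For the converse, I would assume $\Ur$ is locally $\div{I}$-involutive and show that any multiple $uw$ of a monomial $u$ in $\Ur$ admits an $\div{I}$-involutive divisor in $\Ur$, which establishes $\cone(\Ur)\subseteq\cone_\div{I}(\Ur)$; the reverse inclusion always holds. The key idea is to construct a sequence of monomials iteratively. Starting from $u$, if $u\mid_\div{I}uw$ we are done. Otherwise, since $u\mid uw$ but $u\nmid_\div{I}uw$, condition {\bf iii)} of the definition forces the existence of a non-multiplicative variable $x_{k_1}\in\nonmult_\div{I}^\Ur(u)$ dividing $w$; by local involutivity the prolongation $ux_{k_1}$ has an $\div{I}$-involutive divisor $v_1\in\Ur$. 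Iterating, I produce a sequence $(u,v_1,v_2,\ldots)$ with $v_{j+1}\mid_\div{I}v_jx_{k_{j+1}}$ and $x_{k_{j+1}}\in\nonmult_\div{I}^\Ur(v_j)$, where each $v_j$ divides $uw$.

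The main obstacle — and the precise point where continuity enters — is proving that this sequence terminates. Without further hypotheses the process could loop, exactly as in the non-continuous Example~\ref{Exemple:Involutiflocalvsglobal}, where a cyclic chain of divisions prevents the multiples from being captured. The continuity hypothesis is designed to rule this out: by its very definition, any finite sequence satisfying $v_{j+1}\mid_\div{I}v_jx_{k_j}$ with $x_{k_j}$ non-multiplicative consists of pairwise distinct monomials. Therefore all the $v_j$ are distinct, yet each is a divisor of the fixed monomial $uw$, which has only finitely many divisors. Hence the sequence must be finite, and its final term $v_k$ satisfies $v_k\mid_\div{I}uw$, providing the desired involutive divisor. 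I would close by remarking that the argument also uses condition {\bf iii)} to guarantee that at each non-terminating step a genuine non-multiplicative variable dividing the remaining factor exists, so the construction is well-defined until it halts.
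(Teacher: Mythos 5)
Your proof is correct and follows essentially the same route as the paper: the trivial forward direction, then the iterative construction of a chain $(u,v_1,v_2,\ldots)$ of involutive divisors of non-multiplicative prolongations, with continuity forcing the $v_j$ to be pairwise distinct divisors of $uw$, hence termination on an $\div{I}$-involutive divisor of $uw$. Your explicit appeal to condition \textbf{iii)} of the definition to produce a non-multiplicative variable dividing the remaining factor is a small refinement that the paper leaves implicit, but the argument is the same.
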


\begin{proposition}[{\cite[Corollary 4.11]{GerdtBlinkovYuri98}}]
The Janet division is continuous.
\end{proposition}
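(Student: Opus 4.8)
The plan is to establish continuity by induction on the number of variables $n$, using the degree in the largest variable $x_n$ to stratify the chain $(u_1,\ldots,u_k)$. The engine of the argument is a monotonicity lemma, which I would read off from the degree bookkeeping already done in the proof of Proposition~\ref{Proposition:UnicityJanetDivisor}: for an elementary link $u'|_\div{J}^\Ur u x_k$ with $x_k\in\nonmult_\div{J}^\Ur(u)$, one has $\deg_n(u')=\deg_n(u)+1$ when $k=n$, and $\deg_n(u')=\deg_n(u)$ when $k<n$. Indeed, writing $ux_k=u'v$ with $v\in\Mr(\mult_\div{J}^\Ur(u'))$, the two cases of that proof (according to whether $\deg_n(ux_k)$ reaches $\deg_n(\Ur)$) pin down $\deg_n(u')$; and when $k=n$ the hypothesis $x_n\in\nonmult_\div{J}^\Ur(u)$ gives $\deg_n(u)<\deg_n(\Ur)$, which is exactly what forces the degree up by one. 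Consequently the sequence $\deg_n(u_1),\deg_n(u_2),\ldots$ is non-decreasing, and it increases strictly precisely at the links that use $x_n$ as the non-multiplicative variable.

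Next I would partition $\{1,\ldots,k\}$ into the maximal runs on which $\deg_n(u_j)$ is constant. Since $\deg_n$ is non-decreasing, distinct runs carry distinct values of $\deg_n$, so any two $u_i,u_j$ in different runs are automatically distinct; it therefore suffices to prove distinctness inside a single run, say with constant value $\deg_n(u_j)=\alpha_n$. By the lemma, every link internal to such a run uses a non-multiplicative variable $x_{i_j}$ with $i_j<n$. Writing $u_j=\overline{u}_j\,x_n^{\alpha_n}$ with $\overline{u}_j\in\Mr(x_1,\ldots,x_{n-1})$, all the $\overline{u}_j$ of the run lie in $\overline{[\alpha_n]}$, and the aim is to show that the projected sequence is again a continuity chain, this time for the Janet division on $\Mr(x_1,\ldots,x_{n-1})$ relative to $\overline{[\alpha_n]}$. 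The inductive hypothesis then forces the $\overline{u}_j$, hence the $u_j$, to be pairwise distinct within the run.

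The crux, and the step I expect to demand the most care, is checking that projection preserves both non-multiplicativity and involutive divisibility. For non-multiplicativity I would use the description of Janet multiplicativity from~\ref{Subsection:JanetMultiplicativeVariables}: for $i<n$, $x_i$ is multiplicative for $u_j$ with respect to $\Ur$ iff $\deg_i(u_j)=\deg_i([\alpha_{i+1},\ldots,\alpha_n])$, and the stratum $[\alpha_{i+1},\ldots,\alpha_n]\subseteq\Ur$ corresponds bijectively, under $u\mapsto\overline{u}$, to the stratum $[\alpha_{i+1},\ldots,\alpha_{n-1}]\subseteq\overline{[\alpha_n]}$ with $\deg_i$ unchanged; so the status of $x_i$ transfers verbatim between $(\Ur,u_j)$ and $(\overline{[\alpha_n]},\overline{u}_j)$. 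For divisibility, from $u_jx_{i_j}=u_{j+1}v$ with $v\in\Mr(\mult_\div{J}^\Ur(u_{j+1}))$ and $i_j<n$, the equality $\deg_n(u_j)=\deg_n(u_{j+1})=\alpha_n$ forces $\deg_n(v)=0$, whence $v\in\Mr(x_1,\ldots,x_{n-1})$ and its variables lie in $\mult_\div{J}^{\overline{[\alpha_n]}}(\overline{u}_{j+1})$; thus $\overline{u}_{j+1}|_\div{J}^{\overline{[\alpha_n]}}\overline{u}_jx_{i_j}$, as needed. The base case $n=1$ is immediate: only $x_1$ is available, every link strictly increases $\deg_1$ by the lemma, so the $u_j$ are distinct, and the induction closes.
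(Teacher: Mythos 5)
Your proof is correct. Note that the paper itself gives no proof of this proposition; it is stated with a citation to \cite{GerdtBlinkovYuri98}, so the comparison must be with the argument of that reference rather than with anything internal to the paper. Both of your key steps check out: the monotonicity lemma (in a link $u'\,|_{\div{J}}^{\Ur}\,ux_k$ with $x_k\in\nonmult_{\div{J}}^{\Ur}(u)$, writing $ux_k=u'v$ with $v\in\Mr(\mult_{\div{J}}^{\Ur}(u'))$ forces $\deg_n(v)=0$, hence $\deg_n(u')=\deg_n(u)+1$ if $k=n$ and $\deg_n(u')=\deg_n(u)$ if $k<n$) follows exactly by the bookkeeping of Proposition~\ref{Proposition:UnicityJanetDivisor}, and the transfer of multiplicativity and of involutive divisibility under $u\mapsto\overline{u}$ is justified because, for $i<n$, the stratum $[\alpha_{i+1},\ldots,\alpha_n]$ of $\Ur$ maps bijectively onto the stratum $[\alpha_{i+1},\ldots,\alpha_{n-1}]$ of $\overline{[\alpha_n]}$ without changing $\deg_i$. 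The standard proof, essentially that of the cited reference, is the same computation unrolled rather than organized as an induction on $n$: one shows directly that an elementary link forces $u'\succ u$ in the lexicographic order associated with $x_n>\cdots>x_1$ --- your degree analysis, applied successively at levels $n,n-1,\ldots,k+1$, gives $\deg_j(u')=\deg_j(u)$ for all $j>k$ and $\deg_k(u')=\deg_k(u)+1$ --- so the terms of any chain strictly increase for a total order and are pairwise distinct. The two arguments have the same mathematical content; yours stays closer to the recursive structure of Janet's definition in~\ref{Subsection:JanetMultiplicativeVariables} and only ever manipulates the top variable, at the price of the run decomposition and the projection checks, while the lexicographic formulation dispenses with the induction and exhibits a single strictly increasing well-founded measure along chains, a form that is more directly reusable, for instance in termination arguments for the completion procedure of Section~\ref{InvolutiveCompletionAlgorithm}.
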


\subsubsection{Involutive completion procedure}
\label{Procedure:CompletionInvolutive}
Procedure~\ref{A:InvolutiveCompletionProcedure} generalizes Janet's completion procedure given in~\ref{Subsubsection:CompletionProcedure} to any involutive division.
Let us fix a monomial order $\preccurlyeq$ on $\Mr(x_1,\ldots,x_n)$. Given a set of monomials~$\Ur$, the procedure completes the set~$\Ur$ by all possible non-involutives prolongations of monomials in~$\Ur$. 

\begin{algorithm}
\SetAlgoLined
\KwIn{$\Ur$ a finite set of monomials of $\Mr(x_1,\ldots,x_n)$}

\BlankLine

\Begin{

$\widetilde{\Ur} \leftarrow \Ur$

\BlankLine

\While{exist $u\in\widetilde{\Ur}$ and $x\in\nonmult_\div{I}^{\widetilde{\Ur}}(u)$ such that $ux$ does not have $\div{I}$-involutive divisor in~$\widetilde{\Ur}$}{

\BlankLine

{\bf Choose} such a $u$ and $x$ corresponding to the smallest monomial $ux$ with respect to the monomial order $\preccurlyeq$

$\widetilde{\Ur} \leftarrow \widetilde{\Ur} \cup \{ux\}$

}

}

\BlankLine

\KwOut{$\widetilde{\Ur}$ the minimal involutive completion of the set~$\Ur$.}
\caption{Involutive completion procedure.}
\label{A:InvolutiveCompletionProcedure}
\end{algorithm}

By introducing the notion of constructive involutive division, V. P. Gerdt and Y. A. Blinkov gave in~\cite{GerdtBlinkovYuri98} some conditions on the involutive division $\div{I}$ in order to show the correction and the termination of this procedure. A continuous involutive division $\div{I}$ is \emph{constructive} if for any subset of monomials $\Ur$ of $\Mr(x_1,\ldots,x_n)$ and for any non-multiplicative prolongation $ux$ of a monomial $u$ in $\Ur$ satisfying the following two conditions
\begin{enumerate}[{\bf i)}]
\item $ux$ does not have an $\div{I}$-involutive divisor in $\Ur$,
\item any non-multiplicative prolongation $vy \neq ux$ of a monomial $v$ in $\Ur$ that divides $ux$ has an $\div{I}$-involutive divisor in $\Ur$,
\end{enumerate}
the monomial $ux$ cannot be $\div{I}$-involutively divided by a monomial $w$ in $\cone_\div{I}(\Ur)$ with respect to~$\Ur\cup\{w\}$.

If $\div{I}$ is a constructive division, then the completion procedure completes the set $\Ur$ into an involutive set. We refer the reader to {\cite[Theorem~4.14]{GerdtBlinkovYuri98}} for a proof of correctness and termination of the completion procedure under these hypothesis.

\subsubsection{Example}
An application of this procedure on the set of monomials $\Ur=\{\,x_3x_2^2,x_3^3x_1^2\,\}$ given by M. Janet in \cite{Janet29} is developed in~\ref{Example:JanetFinalCompletion}.

\subsection{Others involutive approaches}
\label{SS:OtherInvolutiveApproaches}

For analysis on differential systems several other notions of multiplicative variables were studied by J. M. Thomas 1937 and J.-F. Pommarret in 1978. Others examples of involutive divisions can be found in~\cite{GerdtBlinkovYuri98b}.

\subsubsection{Thomas division}
\label{SSS:ThomasDivision}
In~\cite{Thomas37}, Thomas introduced an involutive division that differs from those of M. Janet also used in the analysis on differential systems.
The multiplicative variables in the sense of Thomas's division for a monomial $u$ with of a finite subset $\Ur$ of $\Mr(x_1,\ldots,x_n)$ are defined as follows:
\[
x_i\in \mult_{\div{T}}^\Ur(u)
\quad\text{if}\quad
\deg_i(u)=\deg_i(\Ur).
\]
In particular, we have $u|_{\div{T}}^\Ur w$ if $w=uv$ and for all variable $x_i$ in $v$, we have $\deg_i(u)=\deg_i(\Ur)$. The Thomas division is a noetherian and continuous involutive division. We refer the reader to~\cite{GerdtBlinkovYuri98} for detailed proofs of this results. 
Note also that the Janet division is a refinement of Thomas division in the sense that for any finite set of monomials $\Ur$ and any monomial $u$ in $\Ur$, the following inclusions hold
\[
\mult_{\div{T}}^\Ur(u) \subseteq \mult_\div{J}^\Ur(u)
\quad\text{and}\quad
\nonmult_\div{J}^\Ur(u) \subseteq \nonmult_{\div{T}}^\Ur(u).
\]

\subsubsection{Pommaret division}
\label{SSSPommaretDivision}
In~\cite{Pommaret78}, Pommaret studied an involutive division that is defined globally, that is the multiplicative variables for the Pommaret division does not depend of a given subset of monomials. In this way, Pommaret's division can be defined on an infinite set of monomials. 

We fix an order on the variables $x_1>x_2>\ldots>x_n$. Given a monomial $u=x_1^{\alpha_1}\ldots x_k^{\alpha_k}$, with~$\alpha_k>0$, the Pommaret multiplicative variables for $u$ are defined by
\[
x_j\in \mult_{\div{P}}^{\Mr(x_1,\ldots,x_n)}(u), \quad \text{if $j\geq k$,}
\qquad\text{and}\qquad
x_j\in \nonmult_{\div{P}}^{\Mr(x_1,\ldots,x_n)}(u), \quad \text{if $j<k$.}
\]
We set $\mult_{\div{P}}^{\Mr(x_1,\ldots,x_n)}(1)=\{x_1,\ldots,x_n\}$.
The Pommaret division is a continuous involutive division that is not noetherian, \cite{GerdtBlinkovYuri98}. The Janet division is also a refinement of the Pommaret division, that is, for an autoreduced finite set of monomials $\Ur$, the following inclusions hold for any monomial $u$ in $\Ur$,
\[
\mult_{\div{P}}^\Ur(u) \subseteq \mult_\div{J}^\Ur(u)
\quad\text{and}\quad
\nonmult_\div{J}^\Ur(u) \subseteq \nonmult_{\div{P}}^\Ur(u).
\]

Finally, let us remark that the separation of variables into multiplicative and non-multiplicative ones in the Pommaret division was used first by Janet in~{\cite[\textsection 20]{Janet29}}. For this reason, the terminology \emph{Pommaret division} is not historically correct. We refer the reader to the monograph by W. M. Seiler~{\cite[Section 3.5]{Seiler10}} for an historical account.

\section{Polynomial partial differential equations systems}
\label{Section:PolynomialPartialDifferentialEquationsSystems}

In this section, we extend the results presented in Section~\ref{Subsection:JanetWork} on monomial systems to linear (polynomial) systems. All PDE systems are considered in analytic categories, namely all unknown functions, coefficients and initial conditions are supposed to be analytic. In a first part, we recall the notion of principal derivative with respect to an order on derivatives introduced by M. Janet. This notion is used to give an algebraic characterization of complete integrability conditions of a PDE system. Then we present a procedure that decides whether a given finite linear PDE system can be transformed into a completely integrable linear PDE system. Finally, we recall the algebraic formulation of involutivity introduced by M. Janet in \cite{Janet29}.

\subsection{Parametric and principal derivatives}
\label{Subsection:ParametricPrincipalDerivative}

\subsubsection{Motivations}
In {\cite[Chapter 2]{Janet29}}, M. Janet first considered the following PDE of one unknown function on $\mathbb{C}^n$:
\begin{equation}\label{EDP-2.1}
\frac{\partial^2 \varphi}{\partial x_n^2}=
\sum_{1\leq i,\, j<n}a_{i,j}(x)\frac{\partial^2 \varphi}{\partial x_i \partial x_j}+
\sum_{1\leq i<n}a_i(x) \frac{\partial^2 \varphi}{\partial x_i \partial x_n}+
\sum_{r=1}^n b_r(x) \frac{\partial \varphi}{\partial x_r}+c(x)\varphi+f(x),
\end{equation}
where the functions $a_{i,j}(x)$, $a_i(x)$, $b_r(x)$, $c(x)$ and $f(x)$ are analytic functions in a neighborhood of a point  $P=(x_1^0,\ldots, x_n^0)$ in $\mathbb{C}^n$.
Given two analytic functions $\varphi_1$ and $\varphi_2$ in a neighborhood $U_Q$ of a point $Q=(x_1^0,\ldots, x_{n-1}^0)$ in $\mathbb{C}^{n-1}$, M. Janet studied the existence of solutions of equation~\eqref{EDP-2.1} under the following initial condition:
\begin{equation}
\label{CI:EDP-2.1}
  \varphi\vert_{x_n=x_n^0}=\varphi_1, \qquad \left.\frac{\partial \varphi}{\partial x_n} \right\vert_{x_n=x_n^0}=\varphi_2,
\end{equation}
holds in a neighborhood of the point $Q$. In \ref{Subsubsection:BoundaryConditions}, we will formulate such condition for higher-order linear PDE systems with several unknown functions, called initial condition.

\subsubsection{Principal and parametric derivatives}
In order to analyse the existence and the uniqueness of a solution of equation~\eqref{EDP-2.1} under the initial condition~(\ref{CI:EDP-2.1}), M. Janet introduced the notions of parametric and principal derivative defined as follows.
The partial derivatives $D^\alpha\varphi$, with $\alpha=(\alpha_1,\ldots,\alpha_n)$,
of an analytic function $\varphi$ are determined by
\begin{enumerate}[{\bf i)}]
\item $\varphi_1$ and its derivatives for $\alpha_n=0$, 
\item $\varphi_2$ and its derivatives for $\alpha_n=1$,
\end{enumerate}
in the neighborhood $U_Q$. These derivatives for $\alpha_n=0$ and $\alpha_n=1$ are called \emph{parametric}, those derivatives for $\alpha_n\geq 2$, i.e. the derivative of $\frac{\partial^2 \varphi}{\partial x_n^2}$, are called \emph{principal}. 
Note that the values of the principal derivative at the point $P$ are entirely given by $\varphi_1$ and $\varphi_2$ and by their derivatives thanks to equation~\eqref{EDP-2.1}.} Note that the notion of parametric derivative corresponds to a parametrization of initial conditions of the system.

\subsubsection{Janet's orders on derivatives}
\label{SubsubsectionDegreeLexicographicOrder}

Let $\alpha=(\alpha_1,\ldots,\alpha_n)$ and $\beta=(\beta_1,\ldots,\beta_n)$ be in $\Nb^n$. Let $\varphi$ be an analytic function.
The derivative $D^\alpha \varphi$ is said to be \emph{posterior} (resp. \emph{anterior}) to $D^\beta \varphi$ if 
\begin{center}
$\vert\alpha\vert>\vert\beta\vert$ \quad (resp. $\vert\alpha\vert<\vert\beta\vert$) \qquad or \qquad $\vert\alpha\vert=\vert\beta\vert$ and $\alpha_n>\beta_n$ \quad (resp. $\alpha_n<\beta_n$).
\end{center}
Obviously, any derivative of $\varphi$ admits only finitely many anterior derivatives of $\varphi$. Using this notion of posteriority, M. Janet showed the existence and unicity problem of equation~\eqref{EDP-2.1} under the initial condition~\eqref{CI:EDP-2.1}.

In his monograph, M. Janet gave several generalizations of the previous posteriority notion. The first one corresponds to the degree lexicographic order, {\cite[\textsection 22]{Janet29}}, formulated as follows:
\begin{enumerate}[{\bf i)}]
\item for $\vert\alpha\vert\neq \vert\beta\vert$, the derivative $D^\alpha\varphi$ is called \emph{posterior} (resp. \emph{anterior}) to~$D^\beta\varphi$, if $\vert\alpha\vert>\vert\beta\vert$ (resp. $\vert\alpha\vert<\vert\beta\vert$),
\item for $\vert\alpha\vert=\vert\beta\vert$, the derivative $D^\alpha\varphi$ is called \emph{posterior} (resp. \emph{anterior}) to $D^\beta\varphi$ if the first non-zero difference 
\[ 
\alpha_n-\beta_n\; , \quad \alpha_{n-1}-\beta_{n-1}\; , \quad \ldots\quad, \; \alpha_1-\beta_1,
\]
is positive (resp. negative).
\end{enumerate}

\subsubsection{Generalization}
\label{Subsubsection:GeneralizationPrincipalDerivative}

Let us consider the following generalization of equation \eqref{EDP-2.1}:
\begin{equation}
\label{EDP-2.2}
D\varphi=\sum_{i \in I}a_{i}D_i\varphi+f, 
\end{equation}
where $D$ and the $D_i$ are differential operators such that $D_i\varphi$ is anterior to $D\varphi$ for all $i$ in $I$. 
The derivative~$D\varphi$ and all its derivatives are called \emph{principal derivatives of the equation (\ref{EDP-2.2})}. All the other derivative of~$u$ are called \emph{parametric derivatives of the equation (\ref{EDP-2.2})}.

\subsubsection{Weight order}
\label{sect-gen}
Further generalization of these order relations were given by M. Janet by introducing the notion of \emph{cote}, that corresponds to a parametrization of a weight order defined as follows.
Let us fix a positive integer $s$. We define a matrix of \emph{weight}
\[
C = \left[
\begin{tabular}{cccccc} 
$C_{1,1}$  & $\ldots$ &  $C_{n,1}$ \\
$\vdots$ & &$\vdots$  \\
$C_{1,s}$ &  $\ldots$ & $C_{n,s}$ \\
\end{tabular}
\right]
\]
that associates to each variable~$x_i$ non-negative integers $C_{i,1}, \ldots, C_{i,s}$, called the \emph{$s$-weights} of~$x_i$. This notion was called \emph{cote} by M. Janet in {\cite[\textsection 22]{Janet29}} following the terminology introduced by Riquier,~\cite{Riquier10}. 
For each derivative $D^\alpha\varphi$, with $\alpha=(\alpha_1,\ldots,\alpha_n)$ of an analytic function $\varphi$, we associate a \emph{$s$-weight}~$\Gamma(C)=(\Gamma_1,\ldots,\Gamma_s)$ where the $\Gamma_k$ are defined by 
\[ 
\Gamma_k =\sum_{i=1}^n \alpha_i C_{i,k}.
\]
Given two monomial partial differential operators $D^\alpha$ and $D^\beta$ as in \ref{SubsubsectionDegreeLexicographicOrder}, we say that $D^\alpha\varphi$ is \emph{posterior} (resp. \emph{anterior}) to $D^\beta\varphi$ with respect to a weigh matrix $C$ if 
\begin{enumerate}[{\bf i)}]
\item $\vert\alpha\vert\neq \vert\beta\vert$ and $\vert\alpha\vert>\vert\beta\vert$ (resp. $\vert\alpha\vert<\vert\beta\vert$),
\item otherwise $\vert\alpha\vert=\vert\beta\vert$ and the first non-zero difference  
\[ 
\Gamma_1-\Gamma_1',  \quad \Gamma_2-\Gamma_2'\; , \quad \ldots\quad, \;  \Gamma_s-\Gamma_s',
\]
is positive (resp. negative). 
\end{enumerate}
In this way, we define an order on the set of monomial partial derivatives, called \emph{weight order}.
Note that, we recover the Janet order defined in \ref{SubsubsectionDegreeLexicographicOrder}  by setting $C_{i,k}=\delta_{i+k,n+1}$.

\subsection{First order PDE systems}

We consider first resolution of first order PDE systems.

\subsubsection{Complete integrability}
In {\cite[\textsection 36]{Janet29}}, M. Janet considered the following first order PDE system 
\begin{equation}
\label{EDP-3.1}
(\Sigma) \qquad  \, \frac{\partial \varphi}{\partial y_\lambda}=f_\lambda(y_1,\cdots, y_h, z_1, \cdots, z_k, \varphi, q_1,\cdots, q_k) \qquad (1\leq \lambda\leq h)
\end{equation}
where $\varphi$ is an unknown function of independent variables $y_1,\ldots, y_h, z_1,\ldots, z_k$,
with $h+k=n$ and~$q_i=\frac{\partial \varphi}{\partial z_i}$. Moreover, we suppose that the functions  $f_\lambda$ are analytic in a neighborhood of a point $P$.
M. Janet wrote down explicitly the integrability condition of the PDE systems $(\Sigma)$ defined in (\ref{EDP-3.1}) namely by the following equality 
\[ 
\frac{\partial}{\partial y_\lambda}\left(\frac{\partial \varphi}{\partial y_\mu}\right) 
=
\frac{\partial}{\partial y_\mu}\left(\frac{\partial \varphi}{\partial y_\lambda}\right),
\]
for any $1\leq \lambda, \mu \leq h$. Following \eqref{EDP-3.1}, we deduce that
\begin{align*}
\frac{\partial}{\partial y_\lambda}\left(\frac{\partial \varphi}{\partial y_\mu}\right)=
&\frac{\partial f_\mu}{\partial y_\lambda}+\frac{\partial \varphi}{\partial y_\lambda}\frac{\partial f_\mu}{\partial \varphi}+\sum_{i=1}^k \frac{\partial f_\mu}{\partial q_i}\frac{\partial^2 \varphi}{\partial y_\lambda \partial z_i}, \\
=
&\frac{\partial f_\mu}{\partial y_\lambda}+f_\lambda\frac{\partial f_\mu}{\partial \varphi}
+\sum_{i=1}^k \frac{\partial f_\mu}{\partial q_i}
\left( \frac{\partial f_\lambda}{\partial z_i}+
q_i\frac{\partial f_\lambda}{\partial \varphi}\right)+
\sum_{i,j=1}^k\frac{\partial f_\lambda}{\partial q_i}\frac{\partial f_\mu}{\partial q_j}
\frac{\partial^2 \varphi}{\partial z_i \partial z_j}. \\
\end{align*}
Hence, the integrability condition is expressed as
\begin{equation}\label{eq-int}
\begin{split}
&\frac{\partial}{\partial y_\lambda}\left(\frac{\partial \varphi}{\partial y_\mu}\right)
-\frac{\partial}{\partial y_\mu}\left(\frac{\partial \varphi}{\partial y_\lambda}\right)\\
=
&\frac{\partial f_\mu}{\partial y_\lambda}+f_\lambda\frac{\partial f_\mu}{\partial \varphi}
+\sum_{i=1}^k \frac{\partial f_\mu}{\partial q_i}
\left( \frac{\partial f_\lambda}{\partial z_i}+
q_i\frac{\partial f_\lambda}{\partial \varphi}\right) -
\frac{\partial f_\lambda}{\partial y_\mu}-f_\mu\frac{\partial f_\lambda}{\partial \varphi}
-\sum_{i=1}^k \frac{\partial f_\lambda}{\partial q_i}
\left( \frac{\partial f_\mu}{\partial z_i}+
q_i\frac{\partial f_\mu}{\partial \varphi}\right) \\
=
&0,
\end{split}
\end{equation}
for any $1\leq \lambda\neq \mu\leq h$. When the PDE system $(\Sigma)$ defined in (\ref{EDP-3.1}) satisfies relation \eqref{eq-int}, the system  $(\Sigma)$ is said to be \emph{completely integrable}.

\begin{theorem}
Suppose that the PDE system $(\Sigma)$ in (\ref{EDP-3.1}) is completely integrable. Let $P$ be a point in~$\mathbb{C}^n$ and $\varphi(z_1,\ldots, z_k)$ be an analytic function in the neighborhood of the point $\pi(P)$, where $\pi:\mathbb{C}^n \rightarrow \mathbb{C}^k$ denotes the canonical projection $(y_1,\ldots, y_h, z_1, \ldots z_k) \, \mapsto \, (z_1,\ldots, z_k)$.
Then, the system $(\Sigma)$ admits only one analytic solution satisfying $u=\varphi \circ \pi$ in a neighborhood of the point~$P$.
\end{theorem}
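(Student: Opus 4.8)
The statement is the analytic Frobenius theorem for the first order system $(\Sigma)$ of \eqref{EDP-3.1}. I read the prescribed data as the natural Cauchy datum on the initial plane through $P=(y^0,z^0)$, namely $u\vert_{y_1=y_1^0,\ldots,y_h=y_h^0}=\varphi$, the slice $\{y=y^0\}$ being identified with a neighbourhood of $\pi(P)$ by $\pi$. The plan is to construct the solution by integrating the $h$ equations one $y$-variable at a time, each integration being a Cauchy--Kowalevsky problem, and then to invoke the integrability relation \eqref{eq-int} to check that the function so produced satisfies all $h$ equations simultaneously.

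First I would build $u$ in $h$ stages. At stage $\lambda$ I regard $u$ as a function of $(y_1,\ldots,y_\lambda,z)$, the remaining variables $y_{\lambda+1},\ldots,y_h$ being frozen at $y^0$, and I solve the single evolution equation $\partial u/\partial y_\lambda=f_\lambda(\,\cdot\,,u,\partial u/\partial z_1,\ldots,\partial u/\partial z_k)$ in the direction $y_\lambda$, with value on $\{y_\lambda=y_\lambda^0\}$ equal to the function produced at stage $\lambda-1$; at stage $1$ this value is $\varphi$. Because each $f_\lambda$ is analytic and involves $u$ together with its first $z$-derivatives only, this is exactly a Cauchy--Kowalevsky problem in $y_\lambda$ with the $z_i$ as transverse variables, so it has a unique local analytic solution by the Cauchy--Kowalevsky theorem of \ref{SSS:CauchyKowalevskyTheorem}. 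After $h$ stages, intersecting the finitely many neighbourhoods obtained, I get an analytic $u$ near $P$ with $u\vert_{y=y^0}=\varphi$.

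The heart of the argument is to verify that this $u$ solves the whole system. Put $E_\lambda:=\partial u/\partial y_\lambda-f_\lambda$; by construction $E_h\equiv 0$, and $E_\lambda$ vanishes on the slice $\{y_{\lambda+1}=y_{\lambda+1}^0,\ldots,y_h=y_h^0\}$. I would prove $E_\lambda\equiv 0$ by descending induction on $\lambda$. Fixing a later direction $y_\mu$ with $\mu>\lambda$, where $E_\mu\equiv 0$ is already known, one commutes the mixed derivatives $\partial^2u/\partial y_\mu\partial y_\lambda=\partial^2 u/\partial y_\lambda\partial y_\mu$ and expands the total derivatives of $f_\lambda$ and $f_\mu$ along $u$ by the chain rule. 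Because $\partial u/\partial y_\mu=f_\mu$ holds exactly, this yields $\partial E_\lambda/\partial y_\mu=\tfrac{d}{dy_\lambda}f_\mu-\tfrac{d}{dy_\mu}f_\lambda$ evaluated along $u$; the second $z$-derivative contributions $\sum_{i,j}\tfrac{\partial f_\lambda}{\partial q_i}\tfrac{\partial f_\mu}{\partial q_j}\partial^2u/\partial z_i\partial z_j$ cancel by symmetry, the integrability relation \eqref{eq-int} kills the remaining inhomogeneous part, and what survives is a \emph{homogeneous} linear first order equation for $E_\lambda$ in $y_\mu$ whose unknowns are $E_\lambda$ and its $z$-derivatives (coefficients built from the $\partial f/\partial q_i$). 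Since $E_\lambda$ has zero Cauchy data on $\{y_\mu=y_\mu^0\}$, the uniqueness part of Cauchy--Kowalevsky forces $E_\lambda\equiv 0$; running this through the successive directions $y_{\lambda+1},\ldots,y_h$ propagates the vanishing from the frozen slice to a full neighbourhood of $P$.

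Finally, uniqueness of $u$ follows from the uniqueness already built into each stage: any analytic solution of $(\Sigma)$ carrying the datum $\varphi$ must, on each intermediate slice, solve the very Cauchy--Kowalevsky problem used to construct $u$, and hence coincides with it. The step I expect to be the genuine obstacle is the verification paragraph, that is, the chain-rule bookkeeping that makes \eqref{eq-int} appear precisely as the obstruction to $E_\lambda$ satisfying a homogeneous equation; one must also organise the descending induction so that, for each $\lambda$, all the errors $E_\mu$ with $\mu>\lambda$ are already known to vanish at the moment they are invoked, across each of the directions through which the vanishing is propagated.
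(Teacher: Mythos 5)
The paper itself offers no proof of this theorem: it is stated as a quotation from Janet's monograph {\cite[\textsection 36]{Janet29}} and the survey passes immediately to higher-order systems, so your argument can only be measured against the classical one --- and it is precisely that classical successive-integration proof of the analytic Frobenius theorem, which is also Janet's own route. Your proof is correct. Your reading of the prescribed data as the Cauchy datum $u\vert_{y=y^0}=\varphi$ is the right one (taken literally, $u=\varphi\circ\pi$ would force $u$ to be independent of the $y_\lambda$, which is clearly not intended); the staged construction is a legitimate chain of Cauchy--Kowalevsky problems; and the verification computation does come out as you predict: with $E_\lambda=\partial u/\partial y_\lambda-f_\lambda(\cdot,u,\partial_z u)$ and $E_\mu\equiv 0$ known, the chain rule yields $\partial E_\lambda/\partial y_\mu=(\partial f_\mu/\partial\varphi)E_\lambda+\sum_i(\partial f_\mu/\partial q_i)\,\partial E_\lambda/\partial z_i+R$, where $R$ is the left-hand side of \eqref{eq-int} evaluated along $u$ plus the antisymmetrized sum $\sum_{i,j}\bigl(\tfrac{\partial f_\mu}{\partial q_i}\tfrac{\partial f_\lambda}{\partial q_j}-\tfrac{\partial f_\lambda}{\partial q_i}\tfrac{\partial f_\mu}{\partial q_j}\bigr)\tfrac{\partial^2 u}{\partial z_i\partial z_j}$; the second part cancels by symmetry of mixed derivatives, and the first vanishes because complete integrability in the paper's sense is the identical vanishing of \eqref{eq-int} in the independent arguments $(y,z,\varphi,q)$, hence may be specialized along any function $u$. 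Only two points deserve to be made explicit. First, each stage equation is nonlinear in $u$ and $\partial_z u$, so you must invoke the general analytic (nonlinear) Cauchy--Kowalevsky theorem rather than the linear forms recalled in \ref{SSS:CauchyKowalevskyTheorem}; this is classical and unproblematic, but it is not the statement the paper records. Second, in each propagation step the homogeneous equation for $E_\lambda$ involves only derivatives tangential to the sub-slice being treated (the $y_\mu$- and $z_i$-directions), which is what allows it to be restricted there and the uniqueness part of Cauchy--Kowalevsky to be applied with zero data; with that observation your descending induction --- all $E_\mu$ with $\mu>\lambda$ vanishing on a full neighbourhood before $E_\lambda$ is propagated through $y_{\lambda+1},\ldots,y_h$ --- closes the argument, and uniqueness of the solution follows stage by stage exactly as you say.
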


\subsection{Higher-order finite linear PDE systems}
In {\cite[\textsection 39]{Janet29}}, M. Janet discussed the existence of solutions of a finite linear PDE system of one unknown function~$\varphi$ in which each equation is of the following form:
\begin{equation}
\label{Equation:HigherOrderPDE}
(\Sigma)\qquad
D_i\varphi=\sum_{j}a_{i,j}D_{i,j}\varphi, \quad i \in I.
\end{equation}
All the functions $a_{i,j}$ are supposed analytic in a neighborhood of a point $P$  in $\mathbb{C}^n$.

\subsubsection{Principal and parametric derivatives}
\label{Subsection:ParametricPrincipalDerivativesSystems}
Consider Janet's order $\jo$ on derivatives as the generalization defined in~\ref{SubsubsectionDegreeLexicographicOrder}. We suppose that each equation of the system $(\Sigma)$ defined by~(\ref{Equation:HigherOrderPDE}) satisfies the following two conditions:
\begin{enumerate}[{\bf i)}]
\item $D_{i,j}\varphi$ is anterior to $D_i\varphi$, for any $i$ in $I$,
\item all the $D_i$'s for $i$ in $I$ are distinct.
\end{enumerate}

We extend the notion of principal derivative introduced in \ref{Subsubsection:GeneralizationPrincipalDerivative} for one PDE equation to a system of the form (\ref{Equation:HigherOrderPDE}) as follows. The derivative $D_i\varphi$, for $i$ in $I$, and all its derivatives are called \emph{principal derivatives} of the PDE system $(\Sigma)$ given in (\ref{Equation:HigherOrderPDE}) with respect to Janet's order. Any other derivative of~$\varphi$ is called \emph{parametric derivative}. 

\subsubsection{Completeness with respect to Janet's order}
\label{CompletenessWithJanetOrdering}
Let us fix an order $x_n>x_{n-1}>\ldots >x_1$ on variables.
By the isomorphism of Proposition~\ref{Proposition:IsomorphismPartialX}, that identifies monomial partial differential operators with monomials in $\Mr(x_1,\ldots,x_n)$, we associate to the set of operators $D_i$'s, $i$ in $I$, defined in~\ref{Subsection:ParametricPrincipalDerivativesSystems}, a set $\lm_{\jo}(\Sigma)$ of monomials. By definition, the set $\lm_{\jo}(\Sigma)$ contains the monomials associated to leading derivatives of the PDE system $(\Sigma)$ with respect to Janet's order. 

The PDE system $(\Sigma)$ is said to be \emph{complete} with respect to Janet's order $\jo$ if the set of monomials~$\lm_{\jo}(\Sigma)$ is complete in the sense of \ref{Subsection:CompleteSystem}.
Procedure \ref{Procedure:CompletePDESystem} consists in a completion procedure that transforms a finite linear PDE system into an equivalent complete linear PDE system.

By definition the set of principal derivatives corresponds, by isomorphism of Proposition~\ref{Proposition:IsomorphismPartialX}, to the multiplicative cone of the monomial set $\lm_{\jo}(\Sigma)$. Hence, when $(\Sigma)$ is complete, the set of principal derivatives corresponds to the involutive cone of $\lm_{\jo}(\Sigma)$. By Proposition~\ref{Proposition:PartitionMr}, there is a partition 
\[
\Mr(x_1,\ldots,x_n) = \cone_\div{J}(\lm_{\jo}(\Sigma)) \amalg \comp{cone}_\div{J}(\lm_{\jo}(\Sigma)).
\]
It follows that set of parametric derivatives of a complete PDE system $(\Sigma)$ corresponds to the involutive cone of the set of monomials $\comp{\lm_{\jo}(\Sigma)}$.

\subsubsection{Initial conditions}
\label{Subsubsection:BoundaryConditionOneUnknown}
Consider the set~$\comp{\lm_{\jo}(\Sigma)}$ of complementary monomials of $\lm_{\jo}(\Sigma)$, as defined in~\ref{Subsubsection:ComplementaryMonomials}. 
To a monomial $x^{\beta}$ in~$\comp{\lm_{\jo}(\Sigma)}$, with $\beta=(\beta_1,\ldots,\beta_n)$ in $\Nb^n$ and 
\[
\cmult_{\div{J}}^{\comp{\lm_{\jo}(\Sigma)}}(x^{\beta}) = \{x_{i_1},\ldots,x_{i_{k_\beta}}\}, 
\]
we associate an arbitrary analytic function 
\[
\varphi_{\beta}(x_{i_1},\ldots,x_{i_{k_\beta}}).
\]
Using these functions, M. Janet defined an \emph{initial condition}:
\[
(C_\beta) \qquad \left. D^\beta \varphi \right|_{x_j=0 \; \forall x_j\in \cnonmult_{\div{J}}^{\comp{\lm_{\jo}(\Sigma)}}(x^{\beta})}= 
\varphi_{\beta}(x_{i_1},\ldots,x_{i_{k_\beta}}).
\]
Then he formulated an \emph{initial condition of the equation~(\ref{Equation:HigherOrderPDE})} with respect to Janet's order as the following set
\begin{equation}
\label{Equation:BoundaryCondition}
\{\, C_\beta \;|\; x^\beta \in \comp{\lm_{\jo}(\Sigma)}\,\}.
\end{equation}

\begin{theorem}[{\cite[\textsection 39]{Janet29}}]
\label{thm_exist-EDP1}
If the PDE system $(\Sigma)$ in~(\ref{Equation:HigherOrderPDE}) is complete with respect to Janet's order~$\jo$, then it admits at most one analytic solution satisfying the initial condition (\ref{Equation:BoundaryCondition}).
\end{theorem}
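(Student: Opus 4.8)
The plan is to establish uniqueness by showing that any analytic solution $\varphi$ defined near $P$ has all of its Taylor coefficients at $P$ forced by the system~\eqref{Equation:HigherOrderPDE} together with the initial condition~\eqref{Equation:BoundaryCondition}. Since an analytic function is determined by its Taylor expansion at a point, two solutions agreeing on these coefficients must coincide in a neighbourhood of $P$. After translating so that $P$ is the origin, the coefficients are exactly the values $D^\alpha\varphi(P)$ for $\alpha\in\Nb^n$, and by the isomorphism $\Phi$ of Proposition~\ref{Proposition:IsomorphismPartialX} they are indexed by the monomials $x^\alpha$ in $\Mr(x_1,\ldots,x_n)$. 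So it suffices to prove that each such value is uniquely determined by the data.

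First I would exploit completeness. Because $(\Sigma)$ is complete, $\cone(\lm_{\jo}(\Sigma))=\cone_\div{J}(\lm_{\jo}(\Sigma))$, so Proposition~\ref{Proposition:PartitionMr} yields the partition $\Mr(x_1,\ldots,x_n)=\cone_\div{J}(\lm_{\jo}(\Sigma))\amalg\comp{\cone}_\div{J}(\lm_{\jo}(\Sigma))$. Thus every $D^\alpha\varphi$ is either \emph{principal} (when $x^\alpha\in\cone_\div{J}(\lm_{\jo}(\Sigma))$) or \emph{parametric} (when $x^\alpha\in\comp{\cone}_\div{J}(\lm_{\jo}(\Sigma))$). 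For a parametric derivative I write $x^\alpha=x^\beta x^\gamma$ with $x^\beta$ a complementary monomial in $\comp{\lm_{\jo}(\Sigma)}$ and $x^\gamma\in\Mr\big(\cmult_\div{J}^{\comp{\lm_{\jo}(\Sigma)}}(x^\beta)\big)$. The initial condition $(C_\beta)$ prescribes $D^\beta\varphi$, restricted to the slice $\{x_j=0\mid x_j\in\cnonmult_\div{J}^{\comp{\lm_{\jo}(\Sigma)}}(x^\beta)\}$, as the fixed analytic function $\varphi_\beta$ of the multiplicative variables. Applying $D^\gamma$, which differentiates only in those multiplicative variables, and evaluating at the origin then determines $D^\alpha\varphi(P)$ outright.

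The heart of the argument is the principal case, which I treat by Noetherian induction on Janet's order $\jo$, legitimate because $\jo$ is a well-order. For $x^\alpha\in\cone_\div{J}(\lm_{\jo}(\Sigma))$ let $u_i\in\lm_{\jo}(\Sigma)$ be its (unique, by Proposition~\ref{Proposition:UnicityJanetDivisor}) Janet divisor, associated to the leading operator $D_i=D^{e_i}$, so that $x^\alpha=u_i x^\delta$ with $x^\delta$ built only from variables multiplicative for $u_i$, and $D^\alpha=D^\delta D_i$. Applying $D^\delta$ to the $i$-th equation of~\eqref{Equation:HigherOrderPDE} and expanding by the Leibniz rule gives
\[
D^\alpha\varphi=D^\delta\Big(\sum_{j}a_{i,j}D_{i,j}\varphi\Big)=\sum_{j}\sum_{\delta'+\delta''=\delta}\binom{\delta}{\delta'}\,(D^{\delta''}a_{i,j})\,D^{\delta'}D_{i,j}\varphi,
\]
where the coefficients $D^{\delta''}a_{i,j}$ are known analytic functions. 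The crucial point, which I expect to be the main obstacle to verify cleanly, is that every derivative $D^{\delta'}D_{i,j}\varphi$ on the right is \emph{strictly} anterior to $D^\alpha\varphi$. Writing $m_{i,j}$ for the monomial of $D_{i,j}$, anteriority of $D_{i,j}\varphi$ to $D_i\varphi$ means $m_{i,j}$ is strictly $\jo$-anterior to $u_i$; since $\delta'\leq\delta$ componentwise and $\jo$ is compatible with multiplication, one gets $x^{\delta'}m_{i,j}\jo x^{\delta}m_{i,j}$ and, strictly, $x^{\delta}m_{i,j}$ anterior to $x^{\delta}u_i=x^\alpha$, so $x^{\delta'}m_{i,j}$ is strictly anterior to $x^\alpha$. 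Evaluating at the origin thus expresses $D^\alpha\varphi(P)$ as a fixed function of values $D^{\delta'}D_{i,j}\varphi(P)$ that are already determined by the induction hypothesis, the parametric derivatives furnishing the base of the induction. This determines all Taylor coefficients of $\varphi$ at $P$, and hence $(\Sigma)$ admits at most one analytic solution satisfying~\eqref{Equation:BoundaryCondition}.
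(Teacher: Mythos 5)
Your proof is correct, and it is essentially the Riquier--Janet argument that the paper attributes to {\cite[\textsection 39]{Janet29}} without reproducing it: parametric derivatives are pinned down at $P$ by differentiating the prescribed data $\varphi_\beta$ along the multiplicative variables of the complementary monomials (using the partition of Proposition~\ref{Proposition:PartitionMr} and completeness to identify the parametric derivatives with the involutive cone of $\comp{\lm_{\jo}(\Sigma)}$), while principal derivatives are pinned down by well-founded induction along $\jo$, via the Leibniz expansion of a prolongation of the defining equation and the fact that $\jo$ is a graded monomial well-order, so that every term on the right is strictly anterior. Since an analytic function is determined by its Taylor coefficients at $P$, this gives the stated uniqueness, exactly in the spirit of the paper's discussion of formal solutions.
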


\subsubsection{PDE systems with several unknown functions}
\label{SubsubsectionPDEsystemsWithSeveralUnknownFunctions}
The construction of initial conditions given in~\ref{Subsubsection:BoundaryConditionOneUnknown} for one unknown function can be extended to linear PDE systems on $\mathbb{C}^n$ with several unknown functions using a weight order. Let us consider a linear PDE system of $m$ unknown analytic functions~$\varphi^1,\ldots, \varphi^m$ of the following form
\begin{equation}
\label{Equation:CanonicalSystem0}
(\Sigma) \qquad D^\alpha \varphi^r=\sum_{\substack{(\beta,s) \in \mathbb{N}^n\times\{1,2,\ldots,m\}}}
a_{\alpha,\beta}^{r,s}D^\beta \varphi^s, \qquad \alpha \in I^r,
\end{equation}
for $1\leq r\leq m$, where $I^r$ is a finite subset of $\mathbb{N}^n$ and the $a_{\alpha,\beta}^{r,s}$ are analytic functions.

For such a system, we define a weight order as follows. Let us fix a positive integer $s$. To any variable~$x_i$ we associate $s+1$ weights $C_{i,0},C_{i,1},\ldots, C_{i,s}$ by setting $C_{i,0}=1$ and the $C_{i,1},\ldots, C_{i,s}$ as defined in \ref{sect-gen}. For each unknown function $\varphi^{j}$, we associate~$s+1$~weights $T_0^{(j)},T_1^{(j)}\ldots, T_s^{(j)}$. With these data, we define the $s+1$ weights $\Gamma_0^{(j)},\Gamma_1^{(j)}, \ldots, \Gamma_s^{(j)}$  of the partial derivative
$D^\alpha \varphi^{j}$ with~$\alpha=(\alpha_1,\ldots, \alpha_n)$ in~$\Nb^n$ by setting
\[
\Gamma_k^{(j)}=\sum_{i=1}^n \alpha_i C_{i,k}+T_k^{(j)} \qquad (0\leq k\leq s). 
\]
We define the notions of anteriority and posteriority on derivatives with respect to this weight order, denoted by $\wo$, as it is done in~\ref{Subsection:ParametricPrincipalDerivativesSystems}  for systems of one unknown function. In particular, we define the notions of principal and parametric derivatives in a similar way to systems of one unknown function. 

Now suppose that the system~(\ref{Equation:CanonicalSystem0}) is written in the form 
\begin{equation}
\label{Equation:CanonicalSystem1}
(\Sigma) \qquad D^\alpha \varphi^r=\sum_{\substack{(\beta,s) \in \mathbb{N}^n\times\{1,2,\ldots,m\} \\ D^\beta \varphi^s \wostrict D^\alpha \varphi^r}}
a_{\alpha,\beta}^{r,s}D^\beta \varphi^s, \qquad \alpha \in I^r.
\end{equation}
We can formulate the notion of completeness with respect to the weight order $\wo$ as in \ref{CompletenessWithJanetOrdering}. Let consider $\lm_{\wo}(\Sigma,\varphi^r)$ be the set of monomials associated to leading derivatives $D^\alpha$ of all PDE in $(\Sigma)$ such that $\alpha$ belongs to $I^r$. The PDE system $(\Sigma)$ is \emph{complete} with respect to $\wo$, if for any $1\leq r \leq m$, the set of monomials $\lm_{\wo}(\Sigma,\varphi^r)$ is complete in the sense of \ref{Subsection:CompleteSystem}.
Finally, we can formulate as in (\ref{Equation:BoundaryCondition}) an initial condition for the linear PDE system~(\ref{Equation:CanonicalSystem1}) with respect to such a weight order:
\begin{equation}
\label{Equation:BoundaryConditionWO}
\{\, C_{\beta,r} \;\;|\;\; x^\beta \in \comp{\lm_{\wo}(\Sigma,\varphi^r)},\;\;\text{for $1\leq r \leq m$}\,\}.
\end{equation}

\begin{theorem}[{\cite[\textsection 40]{Janet29}}]
\label{thm_exist-EDP2}
If the PDE system $(\Sigma)$ in~(\ref{Equation:CanonicalSystem1}) is complete with respect to a weight order~$\wo$, then it admits at most one analytic solution satisfying the initial condition (\ref{Equation:BoundaryConditionWO}).
\end{theorem}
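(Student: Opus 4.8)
The plan is to reduce the multi-function statement to the argument already used for a single unknown function in Theorem~\ref{thm_exist-EDP1}, carrying along the index $r$ of the unknown $\varphi^r$ and replacing Janet's order $\jo$ by the weight order $\wo$ of~\ref{SubsubsectionPDEsystemsWithSeveralUnknownFunctions}. Since every $\varphi^r$ is required to be analytic near $P$, a solution coincides with its Taylor series at $P$; hence it suffices to show that the value at $P$ of every partial derivative $D^\gamma\varphi^r$ is forced by the data. I would therefore prove that, given an arbitrary analytic solution, each number $D^\gamma\varphi^r(P)$ is uniquely determined by the initial condition~(\ref{Equation:BoundaryConditionWO}). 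Two solutions would then share all Taylor coefficients at $P$ and coincide, which is exactly the asserted uniqueness.

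First I would split, for each fixed $r$, the derivatives of $\varphi^r$ into principal and parametric ones. Completeness of $(\Sigma)$ with respect to $\wo$ means that $\lm_{\wo}(\Sigma,\varphi^r)$ is complete, so its involutive cone equals its cone; Proposition~\ref{Proposition:PartitionMr} then yields the partition $\Mr(x_1,\ldots,x_n)=\cone_{\div{J}}(\lm_{\wo}(\Sigma,\varphi^r))\amalg\comp{\cone}_{\div{J}}(\lm_{\wo}(\Sigma,\varphi^r))$. Monomials in the first block index the principal derivatives, those in the second the parametric ones. On the parametric side, each complementary monomial $x^\beta$ carries its multiplicative set $\cmult_{\div{J}}^{\comp{\lm_{\wo}(\Sigma,\varphi^r)}}(x^\beta)$, and the initial condition $C_{\beta,r}$ prescribes $D^\beta\varphi^r$ on the subspace where the non-multiplicative variables vanish as a fixed analytic function $\varphi_{\beta}$ of the multiplicative ones. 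Taylor-expanding each $\varphi_{\beta}$ recovers the value at $P$ of every parametric derivative obtained by multiplicative prolongation of $x^\beta$; because the partition is disjoint and exhaustive, each parametric derivative is reached exactly once, so all their values at $P$ are fixed.

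Next I would determine the principal derivatives by well-founded induction along $\wo$. The order $\wo$ is a well-order, since it first compares total degree and there are only finitely many monomials of each degree, so induction on it is legitimate. Let $D^\gamma\varphi^r$ be a principal derivative, that is $x^\gamma\in\cone_{\div{J}}(\lm_{\wo}(\Sigma,\varphi^r))$. By Proposition~\ref{Proposition:UnicityJanetDivisor} it has a unique Janet divisor $x^\alpha$ with $\alpha\in I^r$, so $\gamma=\alpha+\delta$ with $x^\delta\in\Mr(\mult_{\div{J}}^{\lm_{\wo}(\Sigma,\varphi^r)}(x^\alpha))$. Applying $D^\delta$ to the equation of~(\ref{Equation:CanonicalSystem1}) indexed by $\alpha$ expresses $D^\gamma\varphi^r$ through derivatives of the known analytic coefficients $a_{\alpha,\beta}^{r,s}$ and through derivatives $D^{\beta+\delta}\varphi^s$ coming from the terms with $D^\beta\varphi^s\wostrict D^\alpha\varphi^r$. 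Here I would use that $\wo$ is built from total degree and the additive weights of~\ref{sect-gen}, hence compatible with multiplication: prolonging both sides by $\delta$ shifts the degree and each weight $\Gamma_k$ by the same amount, so $D^{\beta+\delta}\varphi^s\wostrict D^{\alpha+\delta}\varphi^r=D^\gamma\varphi^r$. Every $\varphi$-derivative on the right is thus strictly anterior to $D^\gamma\varphi^r$ and already fixed by the induction hypothesis, while the coefficient terms are given data; therefore $D^\gamma\varphi^r(P)$ is uniquely determined.

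The hard part will be the two structural inputs that make the induction run, rather than any computation. One is verifying that $\wo$ is genuinely a well-order compatible with multiplication, so that prolonging a strict anteriority $D^\beta\varphi^s\wostrict D^\alpha\varphi^r$ by $D^\delta$ preserves it; this is precisely what keeps the right-hand side of each prolonged equation strictly below $D^\gamma\varphi^r$. The other is the reachability supplied by completeness: it is exactly the equality of $\cone$ and $\cone_{\div{J}}$ that lets me write every principal monomial as a single multiplicative prolongation $\alpha+\delta$ of a leading one, so that no derivative of $\varphi^r$ escapes the induction. Note that for uniqueness I need \emph{not} check that different routes to a principal derivative agree: since all values are read off one fixed solution, consistency is automatic, and the integrability conditions therefore play no role in this statement. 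Having fixed every parametric value from~(\ref{Equation:BoundaryConditionWO}) and every principal value by induction, all Taylor coefficients of each $\varphi^r$ at $P$ are determined, whence $(\Sigma)$ admits at most one analytic solution.
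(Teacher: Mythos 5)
Your argument is correct, but there is nothing in the paper to compare it against: for this statement the authors give no proof at all, recording only Janet's remark that it ``could be proved in a way similar to the proof of Theorem~\ref{thm_exist-EDP1}'', and Theorem~\ref{thm_exist-EDP1} is itself cited from Janet's monograph without proof. What you wrote is therefore the argument the paper leaves implicit, and its two pillars are the right ones: an analytic solution is determined by the values of all its derivatives at $P$; Proposition~\ref{Proposition:PartitionMr}, applied to each complete set $\lm_{\wo}(\Sigma,\varphi^r)$, splits these derivatives into parametric ones, whose values at $P$ are read off the initial condition~(\ref{Equation:BoundaryConditionWO}) because differentiation along the $\div{J}$-multiplicative variables of a complementary monomial commutes with restriction to the subspace where the non-multiplicative ones are frozen, and principal ones, which are determined by well-founded induction along $\wostrict$ using completeness (every principal monomial is a multiplicative prolongation $x^\alpha x^\delta$ of a leading monomial) together with additivity of the weights. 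Your closing observation is also correct and worth keeping: uniqueness requires no agreement between different determinations of the same principal derivative, so the integrability conditions of~\ref{Subsubsection:IntegrabilityConditions} are irrelevant here; they only enter for existence, via Theorem~\ref{Theorem:CaracterizationCompleteIntegrability}.

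Two small repairs. First, applying $D^\delta$ to the equation indexed by $\alpha$ produces, by the Leibniz rule, all the terms $D^{\delta'}\big(a^{r,s}_{\alpha,\beta}\big)\,D^{\beta+\delta''}\varphi^s$ with $\delta'+\delta''=\delta$, not only the top terms $D^{\beta+\delta}\varphi^s$ that you treat. Your compatibility argument covers $\delta''=\delta$; for $\delta''<\delta$ note that the total degree, hence the weight $\Gamma_0$, of $D^{\beta+\delta''}\varphi^s$ is strictly smaller than that of $D^\gamma\varphi^r$, so these terms are also strictly anterior and are covered by the induction hypothesis, and the induction closes. Second, $\wostrict$ need not be a well-order: if the weight matrix does not separate all derivatives it is only a partial order, but what your induction actually uses is well-foundedness, which follows from the fact that only finitely many derivatives have $\Gamma_0$ below a given bound (here $C_{i,0}=1$ is what guarantees this); phrased this way the step is airtight.
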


M. Janet said that this result could be proved in a way similar to the proof of Theorem~\ref{thm_exist-EDP1}.

\subsection{Completely integrable higher-order linear PDE systems}

In this subsection we will introduce integrability conditions for higher-order linear PDE systems  of several unknown functions. The main result, Theorem~\ref{Theorem:CaracterizationCompleteIntegrability}, algebraically characterizes the complete integrability property for complete PDE systems. It states that, under the completeness property, the complete integrability condition is equivalent to have all integrability conditions trivial. In this subsection, we will assume that the linear PDE systems are complete.
In Subsection~\ref{Subsection:ReductionPDEsystemToCanonicalForm} we will provide Procedure~\ref{Procedure:CompletePDESystem} that transforms a linear PDE system of the form (\ref{Equation:CanonicalSystem1}) into a complete linear PDE system with respect to a weight order. 

\subsubsection{Formal solutions}
Let consider a linear PDE system $(\Sigma)$ of the form~(\ref{Equation:CanonicalSystem1}) of unknown functions~$\varphi^1,\ldots,\varphi^m$ and independent variables $x_1,\ldots,x_n$. We suppose that $(\Sigma)$ is complete, hence the set of monomials  $\lm_{\wo}(\Sigma,\varphi^r)=\{x^\alpha \;|\; \alpha \in I^r\}$ is complete for all $1\leq r\leq m$. For the remaining part of this subsection, we will denote $\lm_{\wo}(\Sigma,\varphi^r)$ by $\Ur_r$.
Let denote by $(\cone_{\div{J},\wo}(\Sigma))$ the following PDE system, for $1\leq r\leq m$,
\[ 
\Phi(u)(D^{\alpha} \varphi^r)=
\sum_{\substack{(\beta,s) \in \mathbb{N}^n\times\{1,2,\ldots,m\} \\ D^\beta \varphi^s \wostrict D^\alpha \varphi^r}}
\Phi(u)\left(a_{\alpha,\beta}^{r,s}D^\beta \varphi^s\right), 
\]
for $\alpha \in I^r$ and $u \in \mathcal{M}(\mathrm{Mult}(x^\alpha,\Ur_r))$. 

We use the PDE system $(\cone_{\div{J},\wo}(\Sigma))$ to compute the values of the principal derivative at a point~$P^0=(x_1^0,\ldots, x_n^0)$ of~$\mathbb{C}^n$. 
We call \emph{formal solutions} of the PDE system $(\Sigma)$ at the point $P^0$ the elements $\varphi^1,\ldots,\varphi^m$ in $\mathbb{C}[[x_1-x_1^0, \ldots,x_n -x_n^0]]$ which are solutions of $(\Sigma)$. If the system $(\Sigma)$ admits an analytic solution then these formal solutions are convergent series and give analytic solutions of $(\Sigma)$ on a neighbourhood of the point $P^0$.

\subsubsection{Initial conditions}
\label{Subsubsection:BoundaryConditions}
The question is to determine under which condition the system $(\Sigma)$ admits a solution for any given initial condition. These initial conditions are parametrized by the set~$\comp{\Ur_r}$ of complementary monomials of the set of monomials $\Ur_r$ as in \ref{Subsubsection:BoundaryConditionOneUnknown}. Explicitly, for $1\leq r \leq m$, to a monomial $x^{\beta}$ in $\comp{\Ur_r}$, with $\beta$ in $\Nb^n$ and $\cmult_{\div{J}}^{\comp{\Ur_r}}(x^{\beta}) = \{x_{i_1},\ldots,x_{i_{k_r}}\}$, we associate an arbitrary analytic function 
\[
\varphi_{\beta,r}(x_{i_1},\ldots,x_{i_{k_r}}).
\]
Formulating \emph{initial condition} as the following data:
\[
(C_{\beta,r}) \qquad \left. D^{\beta} \varphi^r \right|_{x_j=x_j^0 \; \forall x_j\in \cnonmult_{\div{J}}^{\comp{\Ur_r}}(x^{\beta_r})}= 
\varphi_{\beta,r}(x_{i_1},\ldots,x_{i_{k_r}}).
\]
We set the \emph{initial condition} of the system $(\Sigma)$ in~(\ref{Equation:CanonicalSystem0}) to be the following set
\begin{equation}
\label{Equation:BoundaryCondition2}
\underset{1\leq r \leq m}{\bigcup}\{\, C_{\beta,r} \;|\; x^{\beta_r} \in \comp{\Ur_r}\,\}.
\end{equation}
Note that M. Janet call \emph{degree of generality} of the solution of the PDE system $(\Sigma)$ the dimension of the initial conditions of the system, that is
\[
\underset{\;\;\;u \in \comp{\Ur_r}}{\mathrm{Max}} \big|\cmult_{\div{J}}^{\comp{\Ur_r}}(u)\big|.
\]

\subsubsection{$\div{J}$-normal form}
Suppose that the PDE system $(\Sigma)$ is complete. Given a linear equation $E$ amongst the unknown functions $\varphi^1,\ldots,\varphi^m$ and variables $x_1,\ldots,x_n$. A \emph{$\div{J}$-normal form of $E$ with respect to the system $(\Sigma)$} is an equation obtained from $E$ by the reduction process that replaces principal derivatives by parametric derivatives with a similar procedure to {\bf RightReduce} given in Procedure~\ref{A:RightReduce}. 

\subsubsection{Integrability conditions}
\label{Subsubsection:IntegrabilityConditions}
Given $1\leq r\leq m$ and  $\alpha \in I^r$, let $x_i$ be in $\nonmult_{\div{J}}^{\Ur_r}(x^\alpha)$ a non-multiplicative variable. Let us differentiate the equation
\[ D^\alpha \varphi^r=\sum_{\substack{(\beta,s) \in \mathbb{N}^n\times\{1,2,\ldots,m\} \\ D^\beta \varphi^s \wostrict  D^\alpha \varphi^r}}
a_{\alpha,\beta}^{r,s}D^\beta \varphi^s
\]
by the partial derivative $\Phi(x_i)=\frac{\partial}{\partial x_i}$. We obtain the following PDE
\begin{equation}
\label{Equation:CanonicalSystem}
\Phi(x_i)(D^\alpha \varphi^r)=
\sum_{\substack{(\beta,s) \in \mathbb{N}^n\times\{1,2,\ldots,m\} \\ D^\beta \varphi^s \wostrict D^\alpha \varphi^r}} \left(\frac{\partial a_{\alpha,\beta}^{r,s}}{\partial x_i}D^\beta \varphi^s+a_{\alpha,\beta}^{r,s}\Phi(x_i)(D^\beta \varphi^s)\right). 
\end{equation}
Using system $(\cone_{\div{J},\wo}(\Sigma))$, we can rewrite the PDE~(\ref{Equation:CanonicalSystem}) into an PDE formulated in terms of parametric derivatives and independent variables. The set of monomials $\Ur_r$ being complete, there exists $\alpha'$  in $\Nb^n$ with $x^{\alpha'}$ in $\Ur_r$ and $u$  in~$\mathcal{M}(\mult_{\div{J}}^{\Ur_r}(x^{\alpha'}))$ such that $x_ix^{\alpha}=ux^{\alpha'}$. Then $\Phi(x_i)D^\alpha=\Phi(u)D^{\alpha'}$ as a consequence, we obtain the following equation
\begin{equation}
\label{Equation:CanonicalSystem2}
\sum_{\substack{(\beta,s) \in \mathbb{N}^n\times\{1,2,\ldots,m\} \\ D^\beta \varphi^s \wostrict D^\alpha \varphi^r}}
\left(\frac{\partial a_{\alpha,\beta}^{r,s}}{\partial x_i}D^\beta \varphi^s+a_{\alpha,\beta}^{r,s}\Phi(x_i)(D^\beta \varphi^s)\right)
=
\sum_{\substack{(\beta',s) \in \mathbb{N}^n\times\{1,2,\ldots,m\} \\ D^{\beta'} \varphi^s \wostrict D^{\alpha'} \varphi^r}}
\Phi(u)(a_{\alpha',\beta'}^{r,s}D^{\beta'} \varphi^s).
\end{equation}
Using equations of system $(\cone_{\div{J},\wo}(\Sigma))$, we replace all  principal derivatives in the equation (\ref{Equation:CanonicalSystem2}) by parametric derivatives and independent variables. The order $\wo$ being well-founded this process is terminating. Moreover, when the PDE system $(\Sigma)$ is complete this reduction process is confluent in the sense that any transformations of an equation~(\ref{Equation:CanonicalSystem2}) ends on a unique $\div{J}$-normal forms.
This set of $\div{J}$-normal forms is denoted by $\integralcond_{\div{J},\wo}(\Sigma)$. 
\subsubsection{Remarks} 

The system $(\Sigma)$ being complete any equation~(\ref{Equation:CanonicalSystem2}) is reduced to a unique normal form. Such a normal form allows us to judge whether a given integrability condition is trivial or not.

Recall that the parametric derivatives correspond to the initial conditions. Hence, a non-trivial relation in 
$\integralcond_{\div{J},\cwo}(\Sigma)$ provides a non-trivial relation among the initial conditions. In this way, we can decide whether the system $(\Sigma)$ is completely integrable or not.

\subsubsection{Completely integrable systems}
A complete linear PDE system $(\Sigma)$ of the form~(\ref{Equation:CanonicalSystem1}) is said to be \emph{completely integrable} if it admits an analytic solution for any given initial condition~(\ref{Equation:BoundaryCondition2}). For the geometrical interpretation of these condition, we refer the reader to~\ref{Subsubsection:InvolutionFrobenius}.

\begin{theorem}[{\cite[\textsection 42]{Janet29}}]
\label{Theorem:CaracterizationCompleteIntegrability}
Let $(\Sigma)$ be a complete finite linear PDE system of the form~(\ref{Equation:CanonicalSystem1}). Then the system $(\Sigma)$ is completely integrable if and only if any relation in $\integralcond_{\div{J},\wo}(\Sigma)$ is a trivial identity.
\end{theorem}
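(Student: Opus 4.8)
The plan is to read both implications through the partition $\Mr(x_1,\ldots,x_n) = \cone_\div{J}(\Ur_r) \amalg \comp{\cone}_\div{J}(\Ur_r)$ of Proposition~\ref{Proposition:PartitionMr}: the \emph{principal} derivatives $D^\gamma\varphi^r$ are those indexed by the involutive cone of $\Ur_r$, while the \emph{parametric} ones are indexed by its complement and are exactly the quantities assigned freely by an initial condition~(\ref{Equation:BoundaryCondition2}). A formal solution at $P^0$ amounts to a consistent assignment of a value to every coefficient $D^\gamma\varphi^r(P^0)$, and the prolonged system $(\cone_{\div{J},\wo}(\Sigma))$ is designed precisely to express each principal value in terms of anterior ones. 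The theorem thus reduces to two questions: whether this recursion produces a \emph{well-defined} family of Taylor coefficients, and whether the resulting series is convergent.

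For the implication from triviality of the integrability conditions to complete integrability, I would fix an arbitrary initial condition, give the parametric coefficients their prescribed values, and compute the principal ones recursively from $(\cone_{\div{J},\wo}(\Sigma))$; the recursion terminates because $\wo$ is a well-order. By Proposition~\ref{Proposition:UnicityJanetDivisor} every principal monomial $x^\gamma$ has a \emph{unique} Janet divisor $x^\alpha$ in $\Ur_r$, so the multiplicative prolongation $D^\gamma\varphi^r=\Phi(u)D^\alpha\varphi^r$ computing its value is unambiguous. An inconsistency can therefore only appear at a \emph{non-multiplicative} prolongation $x_ix^\alpha$; by completeness (Proposition~\ref{Proposition:Completeness}) such a monomial also factors as a multiplicative prolongation $ux^{\alpha'}$ of another leading monomial, and the discrepancy between the two evaluations of this single coefficient is, after reduction, exactly an element of $\integralcond_{\div{J},\wo}(\Sigma)$ (this is the construction of~\ref{Subsubsection:IntegrabilityConditions}). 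Hence if every such element is a trivial identity, the whole family of coefficients is consistently determined. The structural point making this finite check sufficient is that consistency at the finitely many non-multiplicative prolongations of $\Ur_r$ forces consistency at \emph{all} higher prolongations; I would obtain this local-to-global passage from the continuity of the Janet division together with the involutivity criterion \cite[Theorem~4.10]{GerdtBlinkovYuri98}.

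It then remains to promote this formally consistent series to an analytic solution. For this I would run the classical Cauchy--Kowalevsky argument (\ref{SSS:CauchyKowalevskyTheorem}) in its Riquier--Janet form, exactly as in the proof sketched for Theorem~\ref{thm_exist-EDP1}: dominate the analytic coefficients $a^{r,s}_{\alpha,\beta}$ and the initial data by a majorant problem whose explicit solution is analytic, and check that the recursively defined coefficients are bounded in absolute value by those of the majorant. This yields convergence on a neighbourhood of $P^0$, and by construction the sum satisfies $(\Sigma)$; this majorant estimate is the analytically heaviest step and is where the standing analyticity hypotheses are indispensable.

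For the converse I would argue by contraposition. If some relation in $\integralcond_{\div{J},\wo}(\Sigma)$ is non-trivial then, being a $\div{J}$-normal form, it is an equation involving only parametric derivatives, independent variables and the coefficients. Since the parametric derivatives are in bijection with the freely choosable initial data of~(\ref{Equation:BoundaryCondition2}), one may choose an initial condition violating this relation at $P^0$; for that data no formal, hence no analytic, solution can exist, contradicting complete integrability. Therefore every integrability condition is trivial. The main obstacle of the proof is the coupling of the combinatorial local-to-global step --- guaranteeing that the finite set $\integralcond_{\div{J},\wo}(\Sigma)$ captures all obstructions --- with the majorant convergence estimate; the remaining bookkeeping with the well-order $\wo$ and the partition of derivatives is routine.
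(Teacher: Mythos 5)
Your proposal is correct and follows essentially the route the paper endorses: the paper itself contains no proof of Theorem~\ref{Theorem:CaracterizationCompleteIntegrability}, deferring to {\cite[\textsection 43]{Janet29}}, and Janet's argument there is exactly your two-step scheme --- necessity by prescribing initial data that violate a putative non-trivial relation among parametric derivatives (legitimate, since parametric derivatives at $P^0$ are independently prescribable Taylor coefficients of the initial data), and sufficiency by the well-founded recursion computing the principal coefficients followed by a Riquier-type majorant argument for convergence. The one imprecision is your citation for the local-to-global step: what must be globalized is the confluence of the reduction process on the equations themselves (asserted in~\ref{Subsubsection:IntegrabilityConditions}, and corresponding to the local involutivity criterion for autoreduced sets, {\cite[Theorem 6.5]{GerdtBlinkovYuri98}}, recalled in Section~\ref{Section:PolynomialInvolutiveBases}), not the monomial criterion {\cite[Theorem 4.10]{GerdtBlinkovYuri98}}, which is vacuous here since the set of leading monomials is already complete by hypothesis.
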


A proof of this result is given in {\cite[\textsection 43]{Janet29}}.
Note that the later condition is equivalent to say that any relation~(\ref{Equation:CanonicalSystem2}) is an algebraic consequence of a PDE equation of the system $(\cone_{\div{J},\wo}(\Sigma))$.

\subsection{Canonical forms of linear PDE systems}

In this subsection, we recall from~\cite{Janet29} the notion of canonical linear PDE system. A canonical system is a normal form with respect to a weight order on derivatives, and satisfying some analytic conditions, allowing to extend the Cauchy-Kowalevsky's theorem given in~\ref{SSS:CauchyKowalevskyTheorem}. Note that this terminology refers to a notion of normal form, but it does not correspond to the well known notion for a rewriting system meaning both terminating and confluence. In this notes, we present canonical systems with respect to weight order as it done in Janet's monograph \cite{Janet29}, but we notice that this notion can be defined with any total order on derivative. 

\subsubsection{Autoreduced PDE systems}
\label{Subsubsection:AutoreducedSystem}
Let $(\Sigma)$ be a finite linear PDE system. Suppose that a weight order~$\wo$ is fixed on the set of unknown functions $\varphi^1,\ldots,\varphi^m$ of $(\Sigma)$ and their derivatives, as defined in~\ref{SubsubsectionPDEsystemsWithSeveralUnknownFunctions}.
We suppose also that each equation of the system $(\Sigma)$ can be expressed in the following form
\[
(\Sigma^{(\alpha,r)})
\qquad
D^\alpha \varphi^r=\sum_{\substack{(\beta,s) \in \mathbb{N}^n\times\{1,2,\ldots,m\} \\ D^\beta \varphi^s \wostrict D^\alpha \varphi^r}}
a_{(\beta,s)}^{(\alpha,r)}D^\beta \varphi^s,
\]
so that 
\begin{equation}
\label{Equation:SigmaAlphaR}
(\Sigma) = \bigcup_{(\alpha,r)\in I} \Sigma^{(\alpha,r)},
\end{equation}
the union being indexed by a multiset $I$. 
The support of the equation $(\Sigma^{(\alpha,r)})$ is defined by
\[
\mathrm{Supp}( \Sigma^{(\alpha,r)}) = \{\,(\beta,s)\;|\;a_{(\beta,s)}^{(\alpha,r)} \neq 0\;\}.
\]

For $1\leq r \leq m$, consider the set of monomials~$\lm_{\wo}(\Sigma,\varphi^r)$ corresponding to leading derivatives, that is monomials $x^{\alpha}$ such $(\alpha,r)$ belongs to $I$.
The system $(\Sigma)$ is said to be 
\begin{enumerate}[{\bf i)}]
\item \emph{$\div{J}$-left-reduced with respect to $\wo$} if for any $(\alpha,r)$ in $I$ there is no $(\alpha',r)$ in $I$ and non-trivial monomial $x^\gamma$ in $\Mr(\mult_\div{J}^{\lm_{\wo}(\Sigma,\varphi^r)}(x^{\alpha'}))$ such that $x^\alpha=x^\gamma x^{\alpha'}$,
\item \emph{$\div{J}$-rigth-reduced with respect to $\wo$} if, for any $(\alpha,r)$ in $I$ and any $(\beta,s)$ in $\mathrm{Supp}( \Sigma^{(\alpha,r)})$, there is no~$(\alpha',s)$ in 
$I$ and non-trivial monomial $x^\gamma$ in $\Mr(\mult_\div{J}^{\lm_{\wo}(\Sigma,\varphi^r)}(x^{\alpha'}))$ such that $x^\beta=x^\gamma x^{\alpha'}$,
\item \emph{$\div{J}$-autoreduced with respect to $\wo$} if it is both $\div{J}$-left-reduced and $\div{J}$-right-reduced with respect to~$\wo$.
\end{enumerate}

\subsubsection{Canonical PDE systems}
\label{Subsubsection:CanonicalSystems}
A PDE system $(\Sigma)$ is said to be \emph{$\div{J}$-canonical with respect a weight order~$\wo$} if it satisfies the following five conditions
\begin{enumerate}[{\bf i)}]
\item it consists of finitely many equations and each equation can be expressed in the following form
\[
D^\alpha \varphi^r=\sum_{\substack{(\beta,s) \in \mathbb{N}^n\times\{1,2,\ldots,m\} \\ D^\beta \varphi^s \wostrict D^\alpha \varphi^r}}
a_{(\beta,s)}^{(\alpha,r)}D^\beta \varphi^s,
\]
\item the system $(\Sigma)$ is $\div{J}$-autoreduced with respect to $\wo$,
\item the system $(\Sigma)$ is complete,
\item the system $(\Sigma)$ is completely integrable,
\item the coefficients $a_{(\beta,s)}^{(\alpha,r)}$ of the equations in {\bf i)} and the initial conditions of $(\Sigma)$ are analytic.
\end{enumerate}
Under these assumptions, the system $(\Sigma)$ admits a unique analytic solution satisfying appropriate initial conditions parametrized by complementary monomials as in \ref{Subsubsection:BoundaryConditionOneUnknown}.

\subsubsection{Remark}
We note that the notion of canonicity given by Janet in~\cite{Janet29} does not impose the condition being $\div{J}$-autoreduced, even if Janet had mentioned this autoreduced property for some simple cases. The autoreduced property implies the minimality of the system. This fact was formulated by V. P. Gerdt and Y. A. Blinkov in~\cite{GerdtBlinkovYuri98b} with the notion of minimal involutive basis.

\subsubsection{Example}
In~{\cite[\textsection 44]{Janet29}}, M. Janet studied the following linear PDE system of one unknown function~$\varphi$
\[
(\Sigma)\qquad
\begin{cases} 
p_{54}=p_{11}, \\
p_{53}=p_{41}, \\
p_{52}=p_{31}, \\
p_{44}=p_{52}, \\
p_{43}=p_{21}, \\
p_{33}=p_{42}, 
\end{cases}
\]
where $p_{i,j}$ denotes $\dfrac{\partial^2 \varphi}{\partial x_i \partial x_j}$. In Example~\ref{Example:JanetExampleB}, we have shown that the left hand sides of the equations of this system form a complete set of monomials. Let us define the following weights for the variables:
\begin{center}
\begin{tabular}{ccccc} 
$x_1$ & $x_2$ & $x_3$ & $x_4$ & $x_5$ \\ \hline
$1$ & $0$ & $1 $ & $1 $ & $2$ \\
$0$ & $0$ & $0$ & $1$ & $1$ \\
\end{tabular}
\end{center}
We deduce the following weights for the second derivatives:
\begin{center}
\begin{tabular}{c|c|c|c|c|c|c|c|c}
$p_{22}$ & $\begin{matrix} p_{21} \\ p_{32} \end{matrix}$
& $p_{42}$ & $\begin{matrix} p_{11} \\ p_{31} \\ p_{33} \end{matrix}$
& $\begin{matrix} p_{52} \\ p_{41} \\ p_{43} \end{matrix}$
& $p_{44}$ & $\begin{matrix} p_{51} \\ p_{53} \end{matrix}$
& $p_{54}$ & $p_{55}$ \\ \hline
$0$ & $1$ & $1$ & $2$ & $2$ & $2$ & $3$ & $3$ & $4$ \\
$0$ & $0$ & $1$ & $0$ & $1$ & $2$ & $1$ & $2$ & $2$ \\
\end{tabular}
\end{center}
As seen in Example~\ref{Example:JanetExampleB2}, given any four analytic functions
\[ 
\varphi_0(x_1,x_2), \quad \varphi_3(x_1, x_2), \quad \varphi_4(x_1, x_2), \quad \varphi_5(x_1, x_5), 
\]
there exists a unique solution of the PDE system $(\Sigma)$.
Note that the initial condition is given by  
\begin{align*} 
\varphi\vert_{x_3=x_3^0, x_4=x_4^0, x_5=x_5^0}&=\varphi_{0,0,0,0,0}(x_1,x_2),\\
\left. \frac{\partial \varphi}{\partial x_3}\right\vert_{x_3=x_3^0, x_4=x_4^0, x_5=x_5^0}&=\varphi_{0,0,1,0,0}(x_1, x_2), \\
\left. \frac{\partial \varphi}{\partial x_4}\right\vert_{x_3=x_3^0, x_4=x_4^0, x_5=x_5^0}&=\varphi_{0,0,0,1,0}(x_1, x_2), \\
\left. \frac{\partial \varphi}{\partial x_5}\right\vert_{x_2=x_2^0, x_3=x_3^0, x_4=x_4^0}&=\varphi_{0,0,0,0,1}(x_1, x_5).
\end{align*}
We set
\begin{center}
\begin{tabular}{c|ccccc} 
$A=p_{54}-p_{11}$ & $x_5$ & $x_4$ & $x_3$ & $x_2$ & $x_1$ \\
$B=p_{53}-p_{41}$ & $x_5$ & $$ & $x_3$ & $x_2$ & $x_1$ \\
$C=p_{52}-p_{31}$ & $x_5$ & $$ & $$ & $x_2$ & $x_1$ \\
$D=p_{44}-p_{52}$ & $$ & $x_4$ & $x_3$ & $x_2$ & $x_1$ \\
$E=p_{43}-p_{21}$ & $$ & $$ & $x_3$ & $x_2$ & $x_1$ \\
$F=p_{33}-p_{42}$ & $$ & $$ & $x_3$ & $x_2$ & $x_1$ $$
\end{tabular}
\end{center}
where the variable on the right correspond to the multiplicative variables of the first term.
In order to decide if the system $(\Sigma)$ is completely integrable  it suffices to check if the following  terms  
\[ 
B_4, C_4, C_3, D_5, E_5, E_4, F_5, F_4 
\]
are linear combinations of derivative of the terms $A,B,C,D,E,F$ with respect to their multiplicative variables. 
Here $Y_i$ denotes the derivative $\frac{\partial}{\partial x_i} Y$ of a term $Y$.
Finally, we observe that
\begin{align*}
&B_4=A_3-D_1-C_1, \\
&C_4=A_2-E_1, \qquad C_3=B_2-F_1, \\
&D_5=A_4-B_1-C_5, \\
&E_5=A_3-C_1, \qquad E_4=D_3+B_2, \\
&F_5=B_3-A_2+E_1, \qquad F_4=E_3-D_2-C_2.
\end{align*}
As a consequence the system $(\Sigma)$ is completely integrable, hence it is $\div{J}$-canonical.

\subsection{Reduction of a PDE system to a canonical form}
\label{Subsection:ReductionPDEsystemToCanonicalForm}
In his monograph~\cite{Janet29}, M. Janet did not mention about the correctness of the procedures that he introduced in order to reduce a finite linear PDE system to a canonical form.
In this section, we explain how to transform a finite linear PDE system with several unknown functions by derivation, elimination and autoreduction, into an equivalent linear PDE system that is either in canonical form or in incompatible system. For linear PDE systems with constant coefficients, the correctness of the procedure can be verified easily.

\subsubsection{Equivalence of PDE system} 
Janet's procedure transforms by reduction and completion a finite linear PDE system into a new PDE system. The PDE system obtained in this way is equivalent to the original system. In his work, M. Janet dit not explain this notion of equivalence that can be described as follows. 
Consider two finite linear PDE systems with $m$ unknown functions and $n$ independent variables
\[
(\Sigma^l) \qquad \sum_{j=1}^m p_{i,j}^l \varphi^j =0, \qquad i \in I^l,
\]
for $l=1,2$, where $p_{i,j}^l$ are linear differential operators.
We say that the PDE systems $(\Sigma^1)$ and $(\Sigma^2)$ are \emph{equivalent} if the set of solutions of the two systems are the same. This notion can be also formulated by saying that the $D$-modules generated by the families of differentials operators $(p_{i,1}^1,\ldots,p_{i,m}^1)$ for $i\in I^1$ and $(p_{i,1}^2,\ldots,p_{i,m}^2)$ for $i\in I^2$ are equals.

\subsubsection{A canonical weight order}
\label{Subsubsection:CanonicalWeightOrder}
Consider a finite linear PDE system $(\Sigma)$ of $m$ unknown functions~$\varphi^1,\ldots, \varphi^m$ of independent variables $x_1,\ldots, x_n$.
To these variables and functions we associate the following weights 
\begin{center}
\begin{tabular}{ccccc|cccc} 
$x_1$ & $x_2$ & $\ldots$ & $x_{n-1}$ & $x_n$ & $\varphi^1$ & $\varphi^2$ & $\ldots$ & $\varphi^m$ \\ \hline
$1$ & $1$ & $\ldots$ & $1$ & $1$ & $0$ & $0$ & $\ldots$ & $0$ \\
$0$ & $0$ & $\ldots$ & $0$ & $0$ & $1$ & $2$ & $\ldots$ & $m$ \\
$0$ & $0$ & $\ldots$ & $0$ & $1$ & $0$ & $0$ & $\ldots$ & $0$ \\
$0$ & $0$ & $\ldots$ & $1$ & $0$ & $0$ & $0$ & $\ldots$ & $0$ \\
$\vdots$ & $\vdots$ & & $\vdots$ & $\vdots$ & $\vdots$ & $\vdots$ & & $\vdots$ \\
$0$ & $1$ & $\ldots$ & $0$ & $0$ & $0$ & $0$ & $\ldots$ & $0$ \\
$1$ & $0$ & $\ldots$ & $0$ & $0$ & $0$ & $0$ & $\ldots$ & $0$ \\
\end{tabular}
\end{center}
The weight order on monomial partial derivatives defined in~\ref{sect-gen} induced by this weight system is total. This order is called \emph{canonical weight order} following M. Janet and denoted by $\cwo$.

\subsubsection{Combination of equations}
Consider the PDE system $(\Sigma)$ with the canonical weight order~$\cwo$ defined in \ref{Subsubsection:CanonicalWeightOrder}. We suppose that the system $(\Sigma)$ is given in the same form as (\ref{Equation:SigmaAlphaR}) and that each equation of the system is written in the following form
\[
(E^{(\alpha,r)}_i)
\qquad
D^\alpha \varphi^r=\sum_{\substack{(\beta,s) \in \mathbb{N}^n\times\{1,2,\ldots,m\} \\ D^\beta \varphi^s \cwostrict D^\alpha \varphi^r}}
a_{(\alpha,r),i}^{(\beta,s)}D^\beta \varphi^s,
\qquad
i \in I^{(\alpha,r)}.
\]
For such an equation, the \emph{leading pair} $(\alpha,r)$ of the equation $E^{(\alpha,r)}_i$ will be denoted by $\mathrm{ldeg}_{\cwo}(E_{i}^{\alpha,r})$. We will denote by $\mathrm{Ldeg}_{\cwo}(\Sigma)$ the subset of $\mathbb{N}^n\times \{1,\ldots,m\}$ consisting of leading pairs of equations of the system $(\Sigma)$:
\[
\mathrm{Ldeg}_{\cwo}(\Sigma) = \{\,\mathrm{ldeg}_{\cwo}(E) \;|\; E\;\text{is an equation of}\;  \Sigma\,\}.
\]
The canonical weight order $\cwo$ induces a total order on $\mathbb{N}^n\times \{1,\ldots,m\}$ denoted by $\prec_{lp}$.
We will denote by $K(\alpha,r,i)$ the set of pairs $(\beta,s)$ of running indices in the sum of the equation $E^{(\alpha,r)}_i$. Given~$i$ and~$j$ in $I^{(\alpha,r)}$, we set
\[
(\alpha_{i,j},r_{i,j}) = \mathrm{Max} \big( (\beta,s)\in  K(\alpha,r,i) \cup K(\alpha,r,j) \; |\;
a_{(\alpha,r),i}^{(\beta,s)} \neq a_{(\alpha,r),j}^{(\beta,s)}\big).
\]
We define 
\begin{equation}
\label{Equation:CoefficientB}
b_{(\alpha,r)}^{(\alpha_{i,j},r_{i,j})} =
\begin{cases} 
a_{(\alpha,r),i}^{(\alpha_{i,j},r_{i,j})} & \mbox{if } (\alpha_{i,j},r_{i,j})\in K(\alpha,r,i) \setminus K(\alpha,r,j), \\ 
-a_{(\alpha,r),i}^{(\alpha_{i,j},r_{i,j})} & \mbox{if } (\alpha_{i,j},r_{i,j})\in K(\alpha,r,j) \setminus K(\alpha,r,i), \\ 
a_{(\alpha,r),i}^{(\alpha_{i,j},r_{i,j})} - a_{(\alpha,r),i}^{(\alpha_{i,j},r_{i,j})} & \mbox{if } (\alpha_{i,j},r_{i,j})\in K(\alpha,r,i) \cap K(\alpha,r,j), \\ 
\end{cases} 
\end{equation}
and we denote by $E_{i,j}^{(\alpha,r)}$ the equation
\begin{equation}
\label{Equation:Compose}
D^{\alpha_{i,j}}\varphi^{r_{i,j}}= 
\sum_{(\beta,s) \in K(\alpha,r,j) \atop (\beta,s) \prec_{lp} (\alpha_{i,j},r_{i,j})} 
c_{(\alpha_{i,j},r_{i,j}),j}^{(\beta,s)} D^\beta \varphi^s
-
\sum_{(\beta,s) \in K(\alpha,r,i) \atop (\beta,s) \prec_{lp} (\alpha_{i,j},r_{i,j})} 
c_{(\alpha_{i,j},r_{i,j}),i}^{(\beta,s)} D^\beta \varphi^s,
\end{equation}
where, for any $k=i,j$,
\[
c_{(\alpha_{i,j},r_{i,j}),k}^{(\beta,s)} = a_{(\alpha,r),k}^{(\beta,s)}/b_{(\alpha,r)}^{(\alpha_{i,j},r_{i,j})}.
\]

The equation (\ref{Equation:Compose}) corresponds to a combination of the two equations $E_{i}^{(\alpha,r)}$ and $E_{j}^{(\alpha,r)}$ and it will be denoted by $\mathbf{Combine}_{\cwo}(E_{i}^{(\alpha,r)},E_{j}^{(\alpha,r)})$. Procedure~\ref{A:Add} adds to a set of PDE equations $(\Sigma)$ an equation $E$ by combination.

\begin{algorithm}
\SetAlgoLined
\KwIn{

\begin{tabular}{l}
- A canonical weight order $\cwo$ for $\varphi^1,\ldots,\varphi^m$ and $x_1,\ldots,x_n$.\\
- $(\Sigma)$ a finite linear PDE system with unknown functions $\varphi^1,\ldots,\varphi^m$ of independent variables \\
\qquad $x_1,\ldots,x_n$ given in the same form as (\ref{Equation:SigmaAlphaR}) such that the leading derivatives are different.
\\
- $E$ be a linear PDE in the same form as (\ref{Equation:SigmaAlphaR}).
\end{tabular}
}

\BlankLine

\Begin{

$\Gamma \leftarrow \Sigma$

$(\beta,s) \leftarrow \mathrm{ldeg}_{\cwo}(E)$

\If{$(\beta,s)\notin \mathrm{Ldeg}_{\cwo}(\Gamma)$}{

$\Gamma \leftarrow \Gamma \cup \{E\}$

}

\Else{

let $E^{(\beta,s)}$ be the equation of the system $(\Sigma)$ whose leading pair is $(\beta,s)$.

$C \leftarrow {\bf Combine}_{\cwo}(E^{(\beta,s)},E)$ 

${\bf Add}_{\cwo}(\Gamma , C)$

}

}

\BlankLine

\KwOut{$\Gamma$ a PDE system equivalent to the system obtained from $(\Sigma)$ by adding equation $E$.}
\caption{${\bf Add}_{\cwo}(\Sigma$, $E$)}
\label{A:Add}
\end{algorithm}

\begin{algorithm}
\SetAlgoLined
\KwIn{

\begin{tabular}{l}
- A canonical weight order $\cwo$ for $\varphi^1,\ldots,\varphi^m$ and $x_1,\ldots,x_n$.\\
- $(\Sigma)$ a finite linear PDE system with unknown functions $\varphi^1,\ldots,\varphi^m$ of independent variables \\
\qquad $x_1,\ldots,x_n$ given in the same form as (\ref{Equation:SigmaAlphaR}) such that the leading derivatives are different.
\end{tabular}
}

\BlankLine

\Begin{

\BlankLine

$\Gamma \leftarrow \Sigma$

$I \leftarrow \mathrm{Ldeg}_{\cwo}(\Gamma)$

$\Ur_r \leftarrow \{x^{\alpha} \;|\; (\alpha,r)\in I\}$

\While{\big(exist $(\alpha,r),(\alpha',r)$ in $I$ and non-trivial monomial $x^\gamma$ in $\Mr(\mult_\div{J}^{\Ur_r}(x^{\alpha'}))$ such that $x^\alpha=x^\gamma x^{\alpha'}$\big)}{

\BlankLine

$\Gamma \leftarrow \Gamma \setminus \{E^{(\alpha,r)}\}$

Let $D^\gamma E^{(\alpha',r)}$ be the equation obtained from the equation $E^{(\alpha',r)}$ by applying the operator $D^\gamma$ to the two sides.

$C \leftarrow {\bf Combine}_{\cwo}(E^{(\alpha,r)}, D^\gamma E^{(\alpha',r)})$ 

${\bf Add}_{\cwo}(\Gamma,C)$
}}

\BlankLine

\KwOut{$\Gamma$ a $\div{J}$-left-reduced PDE system with respect to $\cwo$ that is equivalent to $(\Sigma)$.}
\caption{${\bf LeftReduce}_{\div{J},{\cwo}}(\Sigma)$}
\end{algorithm}

\begin{algorithm}
\SetAlgoLined
\KwIn{

\begin{tabular}{l}
- A canonical weight order $\cwo$ for $\varphi^1,\ldots,\varphi^m$ and $x_1,\ldots,x_n$.\\
- $(\Sigma)$ a finite linear PDE system with unknown functions $\varphi^1,\ldots,\varphi^m$ of independent variables \\
\qquad $x_1,\ldots,x_n$ given in the same form as (\ref{Equation:SigmaAlphaR}) and that is $\div{J}$-left reduced with respect to $\cwo$.
\end{tabular}
}

\BlankLine

\Begin{

$\Gamma \leftarrow \Sigma$

$\Gamma' \leftarrow \Gamma$

$I \leftarrow \mathrm{Ldeg}_{\cwo}(\Gamma)$ 

$//$ \emph{The canonical weight order $\cwo$ induces a total}

$//$ \emph{order on the set $I$ of leading pairs denoted by $\preccurlyeq_{lp}$}

$(\delta,t) \leftarrow \mathrm{max}(I)$ with respect to $\preccurlyeq_{lp}$

\BlankLine

\While{$\Gamma' \neq \emptyset$}{

$\Gamma' \leftarrow \Gamma' \setminus \{E^{(\delta,t)}\}$

$I \leftarrow I \setminus \{(\delta,t)\}$

$S \leftarrow \mathrm{Supp}(E^{(\delta,t)})$ 

$\Ur_r \leftarrow \{x^{\alpha} \;|\; (\alpha,r)\in I\}$

\While{\big(exist $(\beta,r)$ in $S$, $(\alpha,r)$ in $I$ and non-trivial monomial $x^\gamma$ in $\Mr(\mult_\div{J}^{\Ur_r}(x^{\alpha}))$ such that $x^\beta=x^\gamma x^{\alpha}$\big)}{

\BlankLine

$\Gamma \leftarrow \Gamma \setminus \{E^{(\delta,t)}\}$

$C \leftarrow E^{(\delta,t)} - a_{(\delta,t)}^{(\beta,r)} D^\beta \varphi^r + a_{(\delta,t)}^{(\beta,r)} D^\gamma (\mathrm{Rhs}(E^{(\alpha,r)}))$ 

${\bf Add}_{\cwo}(\Gamma,C)$
}}
}

\BlankLine

\KwOut{$\Gamma$ a $\div{J}$-right-reduced PDE system with respect to $\cwo$ that is equivalent to $(\Sigma)$.}
\caption{${\bf \;RightReduce}_{\div{J},{\cwo}}(\Sigma)$}
\label{A:RightReduce}
\end{algorithm}

Note that at each step of the procedure ${\bf RightReduce}_{\div{J},\cwo}$ the running system $\Gamma$ remains $\div{J}$-left reduced. As consequence by combining this procedure with the procedure ${\bf LeftReduce}_{\div{J},\cwo}$ we obtain the following autoreduce procedure that transform a PDE system into a autoreduced PDE system.

\subsubsection{Procedure ${\bf Autoreduce}_{\div{J},\cwo}(\Sigma)$}
\label{SSS:ProcedureAutoreduce}
Let us fix a canonical weight order $\cwo$ for $\varphi^1,\ldots,\varphi^m$ and~$x_1,\ldots,x_n$.
Let $(\Sigma)$ be a finite linear PDE system given in the same form as~(\ref{Equation:SigmaAlphaR}) with unknown functions $\varphi^1,\ldots,\varphi^m$ of independent variables $x_1,\ldots,x_n$. We suppose that the leading derivatives of $(\Sigma)$ are all different.
The procedure ${\bf Autoreduce}_{\div{J},\cwo}$ transforms the PDE system $(\Sigma)$ into an $\div{J}$-autoreduced PDE system that is equivalent to $(\Sigma)$ by applying successively the procedures ${\bf LeftReduce}_{\div{J},\cwo}$ and ${\bf RightReduce}_{\div{J},\cwo}$. 
An algebraic version of this procedure is given in Procedure~\ref{AutoreductionProcedure2}. 
Let us remark that the autoreduction procedure given in Janet's monographs corresponds to the ${\bf LeftReduce}_{\div{J},\cwo}$, it does not deal with right reduction of equations. 

Note that, the procedure ${\bf Autoreduce}_{\div{J},\cwo}$ fails if and only if the procedure ${\bf Combine}_{\cwo}$ fails. This occurs when the procedure ${\bf Combine}_{\cwo}$ is applied on equations $E_i^{(\alpha,r)}$ and $E_j^{(\alpha,r)}$ and some coefficients~$b_{(\alpha,r)}^{(\alpha_{i,j},r_{i,j})}$, as defined in (\ref{Equation:CoefficientB}), vanish on some point of $\mathbb{C}^n$. In particular, the procedure~${\bf Autoreduce}_{\div{J},\cwo}$ does not fail when all the coefficients are constant. This constraint on the coefficients of the system concerns only the left reduction  and were not discussed in Janet's monograph.
As a consequence, we have the following result.

\begin{theorem}
\label{Theorem:TerminationAutoreduce}
If $(\Sigma)$ is a finite linear PDE system with constant coefficients, the procedure \linebreak ${\bf Autoreduce}_{\div{J},\cwo}$ terminates and produces a finite autoreduced PDE system that is equivalent to $(\Sigma)$. 
\end{theorem}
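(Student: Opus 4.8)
The plan is to prove the three assertions of the statement separately — \emph{equivalence}, \emph{termination}, and \emph{reducedness} — and to isolate the single point where the constant-coefficient hypothesis intervenes. Since $\mathbf{Autoreduce}_{\div{J},\cwo}$ is by definition the composite of $\mathbf{LeftReduce}_{\div{J},\cwo}$ followed by $\mathbf{RightReduce}_{\div{J},\cwo}$, it suffices to analyse each subprocedure and then compose. Throughout I will use that the canonical weight order $\cwo$ refines the total degree and is therefore a well-order, so that the induced order $\prec_{lp}$ on leading pairs in $\Nb^n\times\{1,\ldots,m\}$ is well-founded.

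I would first dispatch equivalence. Every elementary step of both procedures replaces a single equation $E^{(\alpha,r)}$ of the running system by a $\K$-linear combination of $E^{(\alpha,r)}$ with a derivative $D^\gamma E^{(\alpha',r)}$ (or with $D^\gamma\,\mathrm{Rhs}(E^{(\alpha',r)})$) of another equation that remains in the system. As $D^\gamma E^{(\alpha',r)}$ lies in the $D$-module generated by $E^{(\alpha',r)}$, the new equation lies in the module generated by the previous family; conversely, since the normalising coefficient of the leading term is invertible and $E^{(\alpha',r)}$ is retained, $E^{(\alpha,r)}$ is recovered as a combination of the new equation with $D^\gamma E^{(\alpha',r)}$. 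Hence the generated $D$-module — equivalently the solution set — is preserved at each step, which is exactly the notion of equivalence of PDE systems introduced above. So, provided the procedures halt, their output is equivalent to $(\Sigma)$.

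Next I would handle termination, the heart of the argument. For $\mathbf{LeftReduce}_{\div{J},\cwo}$ I would first show that $\mathbf{Add}_{\cwo}$ itself terminates: each recursive call combines two equations sharing a leading pair and, since both are in solved form (leading coefficient $1$), the leading terms cancel and the returned equation has a strictly $\prec_{lp}$-smaller leading pair; a strictly descending sequence in the well-order $\prec_{lp}$ is finite. For the outer loop, the chosen $x^\gamma$ is non-trivial, so $(\alpha,r)\succ_{lp}(\alpha',r)$ and the equation $C$ inserted in place of the removed $E^{(\alpha,r)}$ has leading pair $\prec_{lp}(\alpha,r)$ (again the leading coefficients coincide and cancel). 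Thus each iteration replaces the element $(\alpha,r)$ of the finite multiset $\mathrm{Ldeg}_{\cwo}(\Gamma)$ by strictly smaller elements, so this multiset strictly decreases in the Dershowitz--Manna multiset extension of $\prec_{lp}$, which is well-founded. For $\mathbf{RightReduce}_{\div{J},\cwo}$, each inner reduction deletes a principal-derivative support term $D^\beta\varphi^r$ and substitutes the terms of $D^\gamma\,\mathrm{Rhs}(E^{(\alpha,r)})$, all of which are $\cwostrict$-anterior to $D^\beta\varphi^r$ by compatibility of $\cwo$ with multiplication and $x^\beta=x^\gamma x^\alpha$; hence the multiset of support terms of the current equation strictly decreases for the well-founded order $\cwo$, and the outer loop removes each equation from $\Gamma'$ exactly once.

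Finally, I would address reducedness and the role of the hypothesis. On exit the negation of each loop guard holds, which is precisely $\div{J}$-left-reducedness for $\mathbf{LeftReduce}_{\div{J},\cwo}$ and $\div{J}$-right-reducedness for $\mathbf{RightReduce}_{\div{J},\cwo}$; as noted in the remark preceding the statement the latter preserves left-reducedness, so the composite output is $\div{J}$-autoreduced. It remains to see that the procedures do not fail, and the only possible failure is the vanishing of the normaliser $b_{(\alpha,r)}^{(\alpha_{i,j},r_{i,j})}$ of~(\ref{Equation:CoefficientB}) inside $\mathbf{Combine}_{\cwo}$. This coefficient is the difference of the two equations' coefficients at the $\prec_{lp}$-maximal pair where they disagree, hence a non-zero element of $\K$ when the $a$'s are constant, so division by it is legitimate everywhere on $\mathbb{C}^n$; this is the unique use of the constant-coefficient hypothesis. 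The step I expect to be the main obstacle is the termination of $\mathbf{LeftReduce}_{\div{J},\cwo}$: one must verify that the nested $\mathbf{Add}_{\cwo}$ recursion never reintroduces $\prec_{lp}$-larger leading pairs and that the combined bookkeeping of removed and inserted equations is faithfully captured by a single multiset, so that the multiset ordering over $\prec_{lp}$ delivers a rigorous descent.
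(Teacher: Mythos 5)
Your proposal is correct and follows essentially the same route as the paper: decompose $\autoreduce_{\div{J},\cwo}$ into ${\bf LeftReduce}_{\div{J},\cwo}$ followed by ${\bf RightReduce}_{\div{J},\cwo}$, use that right reduction preserves left-reducedness, and isolate the vanishing of the coefficient $b_{(\alpha,r)}^{(\alpha_{i,j},r_{i,j})}$ in ${\bf Combine}_{\cwo}$ as the unique failure mode, which the constant-coefficient hypothesis rules out. The only difference is one of detail: the paper records the theorem as a consequence of these remarks without writing out a termination proof, whereas you make it explicit via well-founded descent on leading pairs and the multiset extension of $\prec_{lp}$ — a completion of, not a departure from, the paper's argument.
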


\subsubsection{Completion procedure of a PDE system}
\label{CompletionPDEsystem}
Consider a finite linear PDE system $(\Sigma)$ with the canonical weight order $\cwo$ given in \ref{Subsubsection:CanonicalWeightOrder}. If the system $(\Sigma)$ is $\div{J}$-autoreduced, then the following procedure ${\bf Complete}_{\div{J},\cwo}(\Sigma)$ transforms the system $(\Sigma)$ into a finite complete $\div{J}$-autoreduced linear PDE system. This procedure of completion appears in Janet's monograph~\cite{Janet29} but not given in an explicit way. 

\begin{algorithm}
\SetAlgoLined
\KwIn{ 

\begin{tabular}{l}
- A canonical weight order $\cwo$ for $\varphi^1,\ldots,\varphi^m$ and $x_1,\ldots,x_n$.\\
- $(\Sigma)$ a finite $\div{J}$-autoreduced linear PDE system with unknown functions $\varphi^1,\ldots,\varphi^m$ of independent \\
\quad variables $x_1,\ldots,x_n$ given in the same form as (\ref{Equation:SigmaAlphaR}) and whose leading derivatives are different.
\end{tabular}
}

\BlankLine

\Begin{

$\Gamma \leftarrow \Sigma$

$\Xi \leftarrow \emptyset$

\For{$r=1,\ldots,m$}{

\BlankLine

\While{$\Xi=\emptyset$}{

\BlankLine

$I \leftarrow \mathrm{Ldeg}_{\cwo}(\Gamma)$

$\Ur_r \leftarrow \{x^{\alpha} \;|\; (\alpha,r)\in I\}$

$\Pr_r \leftarrow \big\{\frac{\partial E}{\partial x}  \;|\; E\in \Gamma,\; x\in \nonmult_\div{J}^{\Ur_r}(x^\delta)\;\text{with $(\delta,r)=\mathrm{ldeg}(E)$ and $xx^\delta \notin \cone_{\div{J}}(\Ur_r) $}\big\}$

$C \leftarrow 0$

\BlankLine

\While{$\Pr_r \neq \emptyset$ and $C=0$}{

\BlankLine

{\bf choose} $E^{(\beta,r)}$ in $\Pr_r$,  whose leading pair $(\beta,r)$ is minimal with respect to $\cwo$. 

\BlankLine

$\Pr_r \leftarrow \Pr_r\setminus\{E^{(\beta,r)}\}$ 

$C \leftarrow E^{(\beta,r)}$

$S_C \leftarrow \mathrm{Supp}(C)$ 

\BlankLine

\While{exist $(\delta,r)$ in $S_C$, $(\alpha,r)$ in $I$ and $x^\gamma$ in $\Mr(\mult_\div{J}^{\Ur_{r}}(x^{\alpha}))$ such that $x^\delta=x^\gamma x^{\alpha}$}{

\BlankLine

$C \leftarrow C - a_{(\beta,r)}^{(\delta,r)} D^\delta \varphi^r + a_{(\beta,r)}^{(\delta,r)} D^\gamma (\mathrm{Rhs}(E^{(\alpha,r)}))$

$S_C \leftarrow \mathrm{Supp}(C)$ 

}

}

\If{$C\neq 0$}{

$\Gamma \leftarrow {\bf Autoreduce}_{\div{J},\cwo}(\Gamma\cup\{C\})$

}

\Else{

$\Xi\leftarrow\Gamma$

}}}}

\BlankLine

\KwOut{$(\Xi)$ a linear $\div{J}$-autoreduced PDE system equivalent to $(\Sigma)$ and that is complete with respect to $\cwo$.}
\caption{${\bf Complete}_{\div{J},\cwo}(\Sigma)$}
\label{Procedure:CompletePDESystem}
\end{algorithm}

\subsubsection{Completion and integrability conditions}
In Procedure~\ref{Procedure:CompletePDESystem}, the set $\Pr_r$ contains all the obstructions of the system to be complete. The procedure ${\bf Complete}_{\div{J},\cwo}$ add to the system the necessary equations in order to eliminate all these obstructions. The equations added to the system have the following form 
\[
D^\beta\varphi^r = \mathrm{Rhs}(E^{(\beta,r)}) - a_{(\beta,r)}^{(\delta,r)} D^\delta \varphi^r + a_{(\beta,r)}^{(\delta,r)} D^\gamma (\mathrm{Rhs}(E^{(\alpha,r)}))
\]
with $\delta \neq \beta$ and lead to the definition of new integrability condition of the form (\ref{Equation:CanonicalSystem2}) by using the construction given in~\ref{Subsubsection:IntegrabilityConditions}. 

\subsubsection{Janet's procedure}
Given a finite linear PDE system $(\Sigma)$ with the canonical weight order~$\cwo$ defined in~\ref{Subsubsection:CanonicalWeightOrder}, \emph{Janet's procedure} ${\bf Janet}_{\div{J},\cwo}$ either transforms the system $(\Sigma)$ into a PDE system $(\Gamma)$ that is $\div{J}$-canonical with respect to $\cwo$ or computes an obstruction to transform the system $(\Sigma)$ to such a form. In the first case, the solutions of the $\div{J}$-canonical system $(\Gamma)$ are solutions of the initial system $(\Sigma)$. In the  second case, the obstruction corresponds to a non-trivial relation on the initial conditions.
We refer the reader to \cite{Schwarz92} or \cite{Robertz14} for a deeper discussion on this procedure and its implementations.

Applying successively the procedures $\autoreduce_{\div{J}}$ and $\complete_{\div{J}}$, the first step of the procedure consists in reducing the PDE system $(\Sigma)$ into a PDE system $(\Gamma)$ that is $\div{J}$-autoreduced and complete with respect to $\cwo$.  

Then it computes the set $\integralcond_{\div{J},\cwo}(\Gamma)$ of integrability conditions of the system $(\Gamma)$.
Recall from~\ref{Subsubsection:IntegrabilityConditions} that this set is  a finite set of relations that does not contain principal derivative.  Hence, these integrability conditions are $\div{J}$-normal forms with respect to  $\Gamma$. The system $(\Gamma)$ being complete, these normal forms are unique and by Theorem~\ref{Theorem:CaracterizationCompleteIntegrability}, if all of these normal forms are trivial, then the system $(\Gamma)$ is completely integrable. Otherwise, the procedure takes a non-trivial condition $\mathcal{R}$ in the set~$\integralcond_{\div{J},\cwo}(\Gamma)$ and distinguishes two cases. If the relation $\mathcal{R}$ is among functions $\varphi^1,\ldots, \varphi^m$ and variables $x_1,\ldots,x_n$, then this relation imposes a relation on the initial conditions of the system $(\Gamma)$.
In the other case, the set $\integralcond_{\div{J},\cwo}(\Gamma)$ contains at least one PDE having a derivative of one of the functions $\varphi^1,\ldots,\varphi^m$ and the procedure  ${\bf Janet}_{\div{J},\cwo}$ is applied again to the PDE system $(\Sigma)$ completed by all the PDE equations in $\integralcond_{\div{J},\cwo}(\Gamma)$.

\begin{algorithm}
\SetAlgoLined
\KwIn{

\begin{tabular}{l}
- A canonical weight order $\cwo$ for $\varphi^1,\ldots,\varphi^m$ and $x_1,\ldots,x_n$.\\
- $(\Sigma)$ a finite linear PDE system with unknown functions $\varphi^1,\ldots,\varphi^m$ of independent variables \\
\qquad $x_1,\ldots,x_n$ given in the same form as (\ref{Equation:SigmaAlphaR}) and whose leading derivatives are different.
\end{tabular}

}

\BlankLine

\Begin{

\BlankLine

$\Gamma \leftarrow \autoreduce_{\div{J},\cwo}(\Sigma)$

$\Gamma \leftarrow \complete_{\div{J},\cwo}(\Gamma)$

$C \leftarrow \integralcond_{\div{J},\cwo}(\Gamma)$

\BlankLine

\If{$C$ consists only of trivial identities}{ 

\BlankLine

\Return The PDE system $(\Sigma)$ is transformable to a $\div{J}$-canonical system $(\Gamma)$.
}

\If{$C$ contains a non-trivial relation $\mathcal{R}$ among functions $\varphi^1,\ldots,\varphi^m$ and variables $x_1,\ldots,x_n$}{

\Return The PDE system $(\Sigma)$ is not reducible to a $\div{J}$-canonical system and the relation $\mathcal{R}$ imposes a non-trivial relation on the initial conditions of the system $(\Gamma)$.
}

\Else{
// \emph{$C$ contains a non-trivial relation among functions $\varphi^1,\ldots,\varphi^m$, variables $x_1,\ldots,x_n$,}

// \emph{and at least one derivative of one of the functions $\varphi^1,\ldots,\varphi^m$.}

$\Sigma \leftarrow \Sigma \cup \{C \}$

$\janet_{\div{J},\cwo}(\Sigma)$.
}
}

\BlankLine

\KwOut{Complete integrability of the system $(\Sigma)$ and its obstructions to be reduced to a $\div{J}$-canonical form with respect to $\cwo$.}

\caption{${\bf Janet}_{\div{J},\cwo}(\Sigma)$}
\label{Subsubsection:JanetCompletionProcedure}
\end{algorithm}

\subsubsection{Remarks}
If the procedure stops at the first loop, that is when $C$ consists only of trivial identities, then the system $(\Sigma)$ is reducible to the $\div{J}$-canonical form $(\Gamma)$ equivalent to $(\Sigma)$.

When the set $C$ contains an integrability condition having at least one derivative of the unknown functions, the procedure is applied again to the system $(\Sigma)\cup C$. Notice that, it could be also possible to recall the procedure on $(\Gamma)\cup C$, but as done in Janet's monograph~\cite{Janet29}, we choose to restart the procedure on $(\Sigma)\cup C$ in order to have a PDE system where each equation has a clear meaning, either it comes from the initial problem or the integrability condition. 

Finally, note that the procedure ${\bf Janet}_{\div{J},\cwo}$ fails on a PDE system $(\Sigma)$ if and only if the procedure \linebreak ${\bf Autoreduce}_{\div{J},\cwo}$ fails on $(\Sigma) \cup C$, where $C$ consists of the potential non-trivial relations among the unknown functions and variables added during the process, as explained in \ref{SSS:ProcedureAutoreduce}.
In particular, by Theorem~\ref{Theorem:TerminationAutoreduce}, if $(\Sigma)$ is a finite linear PDE system with constant coefficients, the procedure ${\bf Autoreduce}_{\div{J},\cwo}$ terminates and produces a finite autoreduced PDE system that is equivalent to $(\Sigma)$. 

\subsubsection{Example}
In~{\cite[\textsection 47]{Janet29}}, M. Janet studied the following PDE system:
\[ 
(\Sigma) \qquad 
\begin{cases} p_{33}=&x_2p_{11}, \\
                       p_{22}=&0, 
\end{cases}
\]
where $p_{i_1\ldots i_k}$ denotes the derivative $\dfrac{\partial^k \varphi}{\partial x_{i_1}\ldots \partial x_{i_k}}$ of an unknown function $\varphi$ of independent variables~$x_1,x_2,x_3$.
The set of monomials of the left hand side of the system $(\Sigma)$ is $\Ur=\{x_3^2, x_2^2\}$. The set $\Ur$ is not complete. Indeed, for instance the monomial $x_3x_2^2$ is not in the involutive cone $\cone_\div{J}(\Ur)$.  If we complete the set $\Ur$ by the monomial $x_3x_2^2$ we obtain a complete set $\widetilde{\Ur}:=\Ur \cup \{x_3x_2^2\}$. The PDE system~$(\Sigma)$ is then equivalent to the following PDE system 
\[ 
(\Gamma) \qquad 
\begin{cases} p_{33}=& x_2p_{11}, \\
                         p_{322}=&0, \\
                         p_{22}=&0.
\end{cases}
\]
Note that $p_{322}=\partial_{x_3}p_{22}=0$. The table of multiplicative variables with respect to the set $\widetilde{\Ur}$ is given by 
\begin{center}
\begin{tabular}{c|ccc}
$x_3^2$ & $x_3$ & $x_2$ & $x_1$ \\
$x_3x_2^2$& $$ & $x_2$ & $x_1$ \\
$x_2^2$ & $$ & $x_2$ & $x_1$ 
\end{tabular}
\end{center}
We deduce that there exists only one non-trivial compatibility condition, formulated as follows
\begin{align*}
p_{3322}=
&\partial_{x_3}p_{322}=\partial_{x_2}^2p_{33}, \qquad (x_3.x_3x_2^2=(x_2)^2.x_3^2) 
\\
=
&\partial_{x_2}^2(x_2p_{11})=2p_{211}+x_2p_{2211}=2p_{211}=0, \qquad (p_{2211}=\partial_{x_1}^2 p_{22}=0).
\end{align*}
Hence, $p_{211}=0$ is a non-trivial relation of the system $(\Gamma)$. As a consequence, the PDE system $(\Sigma)$ is not completely integrable. Then, we consider the new PDE system given by 
\[ 
(\Sigma') \quad 
\begin{cases} p_{33}=&x_2p_{11}, \\
                       p_{22}=&0, \\
                       p_{211}=&0.
\end{cases}
\]
The associated set of monomials $\Ur'=\{x_3^2, x_2^2, x_2x_1^2\}$ is not complete. It can be completed into the complete set $\widetilde{\Ur'}:=\Ur' \cup \{x_3x_2^2, x_3x_2x_1^2\}$. The PDE system $(\Sigma')$ is then equivalent to the following PDE system
\[ 
(\Gamma') \quad
\begin{cases} p_{33}=&x_2p_{11}, \\
                          p_{322}=&0, \\
                          p_{3211}=&0, \\
                          p_{22}=&0, \\
                          p_{221}=&0.
\end{cases}
\]
Note that $p_{322}=\partial_{x_3}p_{22}$ and $p_{3211}=\partial_{x_3}p_{211}$. The multiplicative variables with respect to the set of monomials $\Ur'$ is given by the following table
\begin{center}
\begin{tabular}{c|ccc}
$x_3^2$ & $x_3$ & $x_2$ & $x_1$ \\
$x_3x_2^2$& $$ & $x_2$ & $x_1$ \\
$x_3x_2x_1^2$ & $$ & $$ & $x_1$ \\
$x_2^2$ & $$ & $x_2$ & $x_1$ \\
$x_2x_1^2$ & $$ & $$ & $x_1$
\end{tabular}
\end{center}
We deduce that the only non-trivial compatibility relation is 
\begin{align*}
p_{33211}=
&\partial_{x_3}(p_{3211})=0 \\
=
&\partial_{x_1}^2\partial_{x_2}(p_{33})=\partial_{x_1}^2\partial_{x_2}(x_2p_{11}) \\
=
&\partial_{x_1}^2(p_{11}+x_2p_{211})=p_{1111} \qquad (p_{211}=0).
\end{align*}
We deduce that $p_{1111}=0$ is a non-trivial relation of the system $(\Gamma')$. Hence, the system $(\Sigma')$ is not completely integrable. Then, we consider the new PDE system given by
\[ 
(\Sigma'') \quad 
\begin{cases}
p_{33}=&x_2p_{11}, \\
                         p_{22}=&0, \\
                         p_{211}=&0, \\
                         p_{1111}=&0.
                         \end{cases}
\]
The associated set of monomials $\Ur''=\{x_3^2, x_2^2, x_2x_1^2, x_1^4\}$ is not complete. It can be completed into the set of monomials $\widetilde{\Ur''}:=\Ur''\cup \{x_3x_2^2, x_3x_2x_1^2, x_3x_1^4\}$. The PDE system $(\Sigma'')$ is equivalent to the following system 
\[ 
(\Gamma'')\quad
\begin{cases} p_{33}=&x_2p_{11}, \\
                          p_{322}=&0, \\
                          p_{31111}=&0, \\
                          p_{22}=&0, \\
                          p_{211}=&0, \\
                          p_{1111}=&0. \end{cases}
\]
Note that $p_{322}=\partial_{x_2}p_{22}$ and $p_{31111}=\partial_{x_3}p_{1111}$. All the compatibility conditions are trivial identities, by Theorem~\ref{Theorem:CaracterizationCompleteIntegrability} we deduce that the PDE $(\Sigma'')$, obtained from the initial PDE system $(\Sigma)$ by adding compatibility conditons, is completely integrable. 

\subsubsection{Remark} Let us mention, that using a similar procedure presented in this section, M. Janet in~{\cite[\textsection 48]{Janet29}} gave a constructive proof of a result obtained previously by A. Tresse~\cite{Tresse94}, that a infinite linear PDE system can be reduced to a finite linear PDE system.

\subsection{Algebra, geometry and PDE}
The notion of ideal first appeared in the work of R. Dedekind. This notion appeared also in a seminal paper  \cite{Hilbert1890} of D. Hilbert, were he developed the theory of ideals in polynomial rings. In particular, he proved noetherianity results as the noetherianity of the ring of polynomials over a field, now called Hilbert's basis theorem. In its works on PDE systems,~\cite{Janet22a, Janet22b, Janet24}, M. Janet used the notion of ideal generated by homogeneous polynomials under the terminology of \emph{module of forms}, that he defined as follows. He called \emph{form} a homogeneous polynomial with several variables and he defined a \emph{module} of forms as an algebraic system satisfying the two following conditions: 
\begin{enumerate}[{\bf i)}]
\item if a form $f$ belongs to the system, then the form $hf$ belongs to the system for every form $h$, 
\item if $f$ and $g$ are two forms in the system of the same order, then the form $f+g$ belongs to the system.
\end{enumerate}
Finally, in {\cite[\textsection 51]{Janet29}}, M. Janet recall Hilbert's basis theorem.

\subsubsection{Characteristic functions of homogeneous ideals}
\label{SSS:CharacteristicFunctions}
In {\cite[\textsection 51]{Janet29}}, M. Janet recalled the Hilbert description of the problem of finding the number of independent conditions so that a homogenous polynomial of order $p$ belongs to a given homogeneous ideal. This independent conditions correspond to the independent linear forms that vanish all homogenous polynomials of degree $p$ in the ideal. M. Janet recalled from \cite{Hilbert1890} that this number of independent conditions is expressed as a polynomial in $p$ for sufficiently big $p$.

Let $I$ be a homogenous ideal of $\K[x_1,\ldots, x_n]$ generated by polynomials $f_1, \ldots, f_k$. Given a monomial order on $\Mr(x_1,\ldots,x_n)$, we can suppose that all the leading coefficients are equal to $1$.  For any~$p\geq 0$, consider the homogenous component of degree $p$ so that $I=\bigoplus_{p} I_p$ with
\[
I_p:=I \cap \K[x_1,\ldots x_n]_p.
\]
Let us recall that
\[ 
\dim I_p \leq \dim \big( \K[x_1,\ldots, x_n]_p \big)=\Gamma_n^p.
\]
The number of independent conditions so that a homogenous polynomial of order $p$ belongs to the ideal~$I$ is given by the difference 
\[
\chi(p) := \Gamma_n^p - \dim I_p.
\]
This is the number of monomials of degree $p$ that cannot be divided by the monomials $\lm(f_1), \ldots, \lm(f_k)$. The function $\chi(p)$ corresponds to a coefficient of the Hilbert series of the ideal $I$ and is called \emph{characteristic function} of the ideal $I$ by M. Janet, or \emph{postulation} in~{\cite[\textsection 52]{Janet29}}. 
We refer the reader to~\cite{Eisenbud95} for the definition of Hilbert series of polynomial rings and its applications. 
In the Section \ref{Section:InvolutiveSystem}, we will show that the function $\chi(p)$ is polynomial for sufficiently big~$p$.
Finally, note that the set of monomials that cannot be divided by the monomials $\lm(f_1), \ldots, \lm(f_k)$ forms a finite number of classes of complementary monomials.

\subsubsection{Geometrical remark}
\label{GeometricalDescription}
M. Janet gave the following geometrical observation about the characteristic function.  Suppose that $p$ is sufficiently big so that the function $\chi(p)$ is polynomial. 
Let $\lambda-1$ be the degree of the leading term of the polynomial~$\chi(p)$. 
Consider the projective variety $V(I)$ defined by
\[
V(I) = \{a \in \mathbb{P}^{n-1} \; |\; f(a) = 0 \; \text{for all $f$ in I}\,\}.
\]
The integer $\mu=\lc(\chi(p))(\lambda - 1) !$ corresponds to the degree of the variety $V(I)$, \cite{Hilbert1890}.
If $\chi(p)=0$ then the variety $V(I)$ is empty, in the others cases $V(I)$ is a sub-variety of $\mathbb{P}^{n-1}$ of dimension $\lambda - 1$. 

\subsubsection{Example, {\cite[\textsection 53]{Janet29}}}
Consider the monomial ideal $I$ of $\K[x_1,x_2,x_3]$ generated by $x_1^2, x_1x_2, x_2^2$.
The characteristic function $\chi(p)$ of the ideal $I$ is constant equal to $3$.  The unique point that annihilates the ideal $I$ is $(0,0,1)$ with multiplicity $3$. This result is compatible with the fact that the zeros  of the ideal~$J$ generated by the following polynomials
\[
(x_1-ax_3)(x_1-bx_3), \qquad (x_1-ax_3)(x_2-cx_3), \qquad (x_2-cx_3)(x_2-dx_3). 
\]
consists of the three points 
\[ 
(a,c,1), \qquad (a,d,1), \qquad (b,c,1). 
\]

\subsubsection{The ideal-PDE dictionary}
\label{SS:IdealPDEdictionary}
Let $I$ be a homogeneous ideal of $\K[x_1,\ldots, x_n]$ generated by a set~$F=\{f_1,\ldots, f_k\}$ of polynomials. For a fixed monomial order on $\Mr(x_1,\dots, x_n)$, we set $\Ur=\lm(F)$.
Consider the ring isomorphism $\Phi$ from $\K[x_1,\cdots, x_n]$ to $\K[\frac{\partial}{\partial x_1}, \cdots, \frac{\partial}{\partial x_n}]$ given in \ref{Proposition:IsomorphismPartialX}.
To any polynomial $f$ in $I$ we associate a PDE $\Phi(f)\varphi=0$. In this way, the ideal $I$ defines a PDE system~$(\Sigma(I))$. Let $\lambda$ and $\mu$ be the integers associated to the characteristic function $\chi(p)$ as defined in \ref{GeometricalDescription}.
The maximal number of arguments of the arbitrary analytic functions used to define the initial conditions
\[
\{\, C_\beta \;|\; x^\beta \in \comp{\Ur}\,\}
\]
of the PDE system $(\Sigma(I))$, as defined in (\ref{Equation:BoundaryCondition}), corresponds to $\lambda$, explicitly
\[
\lambda = \max_{v\in \comp{\Ur}} |\cmult_{\div{J}}^{\comp{\Ur}}(v)|,
\]
where $\comp{\Ur}$ denotes the set of complementary monomials of $\Ur$.
Moreover, the number of arbitrary analytic functions with $\lambda$ arguments in the initial conditions $\{\, C_\beta \;|\; x^\beta \in \comp{\Ur}\,\}$ is equal to $\mu$, that is
\[
\mu = \big|\,\{\, v \in \comp{\Ur} \;\text{such that}\; |\cmult_{\div{J}}^{\comp{\Ur}}(v)| = \lambda\,\}\,\big|.
\]

Conversely, let $(\Sigma)$ be a PDE system with one unknown function $\varphi$ of independent variables~$x_1,\ldots, x_n$. Consider the set, denoted by $\mathrm{ldo}(\Sigma)$, made of differential operators associated to the principal derivatives of PDE in~$(\Sigma)$, with respect to Janet's order on derivatives defined in \ref{SubsubsectionDegreeLexicographicOrder}.
By isomorphism~$\Phi$, to any monomial differential operator $\frac{\partial^{\vert \alpha\vert}\;}{\partial x_1^{\alpha_1}\cdots \partial x_n^{\alpha_n}}$ in $\mathrm{ldo}(\Sigma)$, we associate a monomial $x_1^{\alpha_1}\ldots x_n^{\alpha_n}$ \linebreak in~$\Mr(x_1,\ldots,x_n)$. 

Let us denote by $I(\Sigma)$ the ideal of $\K[x_1,\ldots,x_n]$ generated by $\Phi^{-1}(\mathrm{ldo}(\Sigma))$. Note that, by construction the ideal $I(\Sigma)$ is monomial and for any monomial $u$ in $I(\Sigma)$ the derivative $\Phi(u)\varphi$ is a principal derivative of the PDE system $(\Sigma)$ as defined in Section \ref{Subsection:ParametricPrincipalDerivativesSystems}.
In {\cite[\textsection 54]{Janet29}}, M. Janet called \emph{characteristic form} any element of the ideal $I(\Sigma)$.

In this way, M. Janet concluded that the degree of generality of the solutions of a linear PDE system with one unknown function is described by the leading term of the charateristic function of the ideal of characteristic forms defined in \ref{SSS:CharacteristicFunctions}.

\subsubsection{The particular case of first order systems}
Consider a completely integrable first order linear PDE system $(\Sigma)$.
The number $\lambda$, defined in~\ref{SS:IdealPDEdictionary}, that is equal to the maximal number of arguments of the arbitrary functions used to define the initial conditions of the system $(\Sigma)$, is also equal in this case to the cardinal of the set $\comp{\Ur}$ of complementary monomials of the set of monomials $\Ur=\Phi^{-1}(\mathrm{ldo}(\Sigma))$. 

\subsection{Involutive systems}
\label{Section:InvolutiveSystem}

In this subsection, we recall the algebraic formulation of involutive systems as introduced by M. Janet. This formulation first appeared in its work in \cite{Janet22a} and \cite{Janet22b}. But notice that this notion comes from the work of \'E. Cartan in~\cite{Cartan04}.

\subsubsection{Characters and derived systems}

Let $I$ be a proper ideal of $\K[x_1,\ldots, x_n]$ generated by homogeneous polynomials.
M. Janet introduced the \emph{characters} of the homogeneous component $I_p$ as the non-negative integers $\sigma_1,\sigma_2,\ldots, \sigma_n$ defined inductively by the following formula
\[ 
\dim \left(I_p + \left(\sum_{i=1}^h \K[x_1,\cdots, x_n]_{p-1}x_i\right)\right)=\dim (I_p) +\sigma_1+\ldots +\sigma_h, \qquad 1\leq h\leq n. 
\]
Note that the sum $\sigma_1+\sigma_2+\ldots +\sigma_n$ corresponds to the codimension of $I_p$ in $\K[x_1,\cdots, x_n]_p$.

Given a positive integer $\lambda$, we set
\[
J_{p+\lambda}=\K[x_1,\ldots, x_n]_\lambda I_p. 
\]
We define the non-negative integers $\sigma_1^{(\lambda)},\sigma_2^{(\lambda)},\ldots, \sigma_n^{(\lambda)}$ by the relations
\[
\dim \left(J_{p+\lambda} + \left(\sum_{i=1}^h \K[x_1,\cdots, x_n]_{p+\lambda-1}x_i\right)\right)=\dim (J_{p+\lambda})+\sigma_1^{(\lambda)}+\ldots +\sigma_h^{(\lambda)} \qquad 1\leq h\leq n.
\]
For $\lambda=1$, M. Janet called $J_{p+1}$ the \emph{derived system} of $I_p$.
Let us mention some properties on these numbers proved by M. Janet. 
\begin{lemma} 
\label{Lemma:Involution}
We set $\sigma_h'=\sigma_h^{(1)}$ and $\sigma_h''=\sigma_h^{(2)}$ for $1\leq h\leq n$.
\begin{enumerate}[{\bf i)}]
\item $\sigma_1'+\sigma_2'+\ldots +\sigma_n' \leq \sigma_1+2\sigma_2+\ldots +n \sigma_n$.
\item If $ \sigma_1'+\sigma_2'+\ldots +\sigma_n' = \sigma_1+2\sigma_2+\ldots +n \sigma_n$, the two following relations hold:
\begin{enumerate}[{\bf a)}]
\item $\sigma_1''+\sigma_2''+\ldots +\sigma_n'' = \sigma_1'+2\sigma_2'+\ldots +n \sigma_n'$.
\item $\sigma_h'=\sigma_h+\sigma_{h+1}+\ldots +\sigma_n$.
\end{enumerate}
\end{enumerate}
\end{lemma}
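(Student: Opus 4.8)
The plan is to recast the whole statement inside the graded quotient ring $R=\K[x_1,\ldots,x_n]/I=\bigoplus_q R_q$, where $R_q=\K[x_1,\ldots,x_n]_q/I_q$ and $c_q:=\dim R_q$. Setting $\overline{V}_h=\sum_{i=1}^h x_iR_{p-1}\subseteq R_p$, the defining relation for the characters unwinds to $\sigma_1+\cdots+\sigma_h=\dim\overline{V}_h$, because the image of $\sum_{i\leq h}\K[x_1,\ldots,x_n]_{p-1}x_i$ inside $R_p$ is exactly $\overline{V}_h$; taking $h=n$ recovers $\sigma_1+\cdots+\sigma_n=c_p$. For the first prolongation I write $\widetilde{R}_{p+1}=\K[x_1,\ldots,x_n]_{p+1}/J_{p+1}$ and note that multiplication by $x_i$ carries $R_p$ into $\widetilde{R}_{p+1}$ (if $f\in I_p$ then $x_if\in J_{p+1}=\K[x_1,\ldots,x_n]_1 I_p$), so that $\sigma_1'+\cdots+\sigma_h'=\dim W_h$ with $W_h=\sum_{i=1}^h x_iR_p\subseteq\widetilde{R}_{p+1}$ and $\sigma_1'+\cdots+\sigma_n'=\dim\widetilde{R}_{p+1}$. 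Both the original and the prolonged characters are thus read off the same flag construction in two successive degrees, which is what lets the same argument run twice.

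The decisive step for {\bf i)} is a kernel bound. For each $h$ the multiplication map $R_p\to W_h/W_{h-1}$, $\bar f\mapsto x_h\bar f$, is surjective, so $\sigma_h'=c_p-\dim K_h$ where $K_h$ is its kernel. I would show $\overline{V}_{h-1}\subseteq K_h$: if $f\equiv\sum_{i<h}x_i\phi_i\pmod{I_p}$, then modulo $J_{p+1}$ one has $x_hf\equiv\sum_{i<h}x_i(x_h\phi_i)$, which lies in $W_{h-1}$. Hence $\dim K_h\geq\dim\overline{V}_{h-1}=\sigma_1+\cdots+\sigma_{h-1}$, giving the termwise estimate $\sigma_h'\leq\sigma_h+\cdots+\sigma_n$. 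Summing over $h$ and using $\sum_{h}(\sigma_h+\cdots+\sigma_n)=\sum_i i\sigma_i$ yields $\sigma_1'+\cdots+\sigma_n'\leq\sigma_1+2\sigma_2+\cdots+n\sigma_n$, which is {\bf i)}. This argument is entirely coordinate-free.

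Since every summand already obeys $\sigma_h'\leq\sigma_h+\cdots+\sigma_n$, equality in {\bf i)} forces equality in each, i.e. $K_h=\overline{V}_{h-1}$ for all $h$, whence $\sigma_h'=\sigma_h+\cdots+\sigma_n$; this is {\bf ii) b)}. For {\bf ii) a)} the key observation is that the second prolongation is the first prolongation of the first, as $J_{p+2}=\K[x_1,\ldots,x_n]_2 I_p=\K[x_1,\ldots,x_n]_1 J_{p+1}$. Applying the previous paragraph verbatim to the pair $\big(\K[x_1,\ldots,x_n]_{p+1},J_{p+1}\big)$ — with $\widetilde{R}_{p+1}$ in the role of $R_p$ and $W_{h-1}$ in the role of $\overline{V}_{h-1}$ — gives $\sigma_h''\leq\sigma_h'+\cdots+\sigma_n'$ unconditionally, hence $\sigma_1''+\cdots+\sigma_n''\leq\sigma_1'+2\sigma_2'+\cdots+n\sigma_n'$, with equality precisely when the level-two kernels $K_h^{(2)}$ coincide with $W_{h-1}$. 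Since the inclusion $W_{h-1}\subseteq K_h^{(2)}$ holds automatically, statement {\bf ii) a)} is equivalent to the reverse inclusion $K_h^{(2)}\subseteq W_{h-1}$, to be extracted from the level-one equality $K_h=\overline{V}_{h-1}$.

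This reverse inclusion is the main obstacle: it is exactly the assertion that an involutive symbol remains involutive after one prolongation. I expect to prove it by passing to generic coordinates, where the characters are computed from the generic initial ideal and $\sigma_h$ counts the degree-$p$ standard monomials whose smallest occurring variable is $x_h$; the level-one equality says precisely that the class-respecting (multiplicative) prolongations of these standard monomials tile the degree-$(p+1)$ standard monomials without overlap, and the Borel-type stability of the generic initial ideal propagates this tiling to degree $p+2$, forcing $K_h^{(2)}=W_{h-1}$. Equivalently, the propagation can be phrased as exactness in the involutive case of the Koszul--Spencer sequence
\[
\wedge^2 \K[x_1,\ldots,x_n]_1 \otimes R_{p-1} \longrightarrow \K[x_1,\ldots,x_n]_1 \otimes R_p \longrightarrow \widetilde{R}_{p+1} \longrightarrow 0,
\]
which is the homological form of Cartan's involutivity test; establishing this vanishing is where the real work lies, whereas {\bf i)} and {\bf ii) b)} follow from the elementary flag estimate above.
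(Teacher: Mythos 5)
Your flag-and-kernel framework is sound, and it genuinely settles parts \textbf{i)} and \textbf{ii)}~\textbf{b)}: the identifications $\sigma_1+\cdots+\sigma_h=\dim\overline{V}_h$ and $\sigma_1'+\cdots+\sigma_h'=\dim W_h$ are legitimate (the multiplication maps descend to the quotients because $x_iI_{p-1}\subseteq I_p$ and $x_iI_p\subseteq J_{p+1}$), the inclusion $\overline{V}_{h-1}\subseteq K_h$ yields the termwise bound $\sigma_h'\leq\sigma_h+\cdots+\sigma_n$, summation gives \textbf{i)}, and equality of the sums forces termwise equality, which is \textbf{ii)}~\textbf{b)}. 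For calibration: the paper itself gives no proof of this lemma at all, it simply refers to Janet's monograph \cite{Janet29}, so the benchmark is a complete self-contained argument rather than a sketch to be matched against the text.

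The genuine gap is \textbf{ii)}~\textbf{a)}. You reduce it, correctly, to the reverse inclusion $K_h^{(2)}\subseteq W_{h-1}$, but what you offer for that inclusion is a program, not a proof, as you acknowledge (``where the real work lies''). This step is not a refinement of your flag estimate: it is precisely the Janet--Cartan theorem that involutivity persists under prolongation, i.e.\ the entire substance of the lemma, and two holes would have to be filled to carry out your plan. First, the $\sigma_h$ of the lemma are attached to the \emph{fixed} coordinates $x_1,\ldots,x_n$, whereas your strategy computes with generic coordinates and the generic initial ideal; you need the bridge that equality $\sum_h\sigma_h'=\sum_h h\sigma_h$ in the given coordinates forces $\sigma_h$ to agree with the generic characters. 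This bridge does exist --- generic coordinates maximize every partial sum $\sigma_1+\cdots+\sigma_h$, hence minimize $\sum_h h\sigma_h$, and applying your inequality \textbf{i)} in generic coordinates squeezes all the quantities into equality --- but no such argument appears in your text. Second, and decisively, the propagation assertion in the generic or Borel-fixed situation (``Borel-type stability \ldots propagates this tiling'', equivalently the exactness of your Koszul--Spencer sequence) is exactly the statement to be proved, so invoking it is circular. Until that combinatorial or homological argument is actually supplied (Janet's own proof in \cite{Janet29}, and the modern proofs via Koszul homology or $\delta$-regularity, each require a genuine induction), you have proved \textbf{i)} and \textbf{ii)}~\textbf{b)} but not \textbf{ii)}~\textbf{a)}.
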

We refer the reader to~\cite{Janet29} for a proof of the relations of Lemma~\ref{Lemma:Involution}.

\subsubsection{Involutive systems} 

The homogenous component $I_p$ is said to be in \emph{involution} when the following equality holds:
\[
\sigma_1'+\sigma_2'+\ldots +\sigma_n' = \sigma_1+2\sigma_2+\ldots +n \sigma_n.
\]

Following properties {\bf ii)}-{\bf a)} of Lemma~\ref{Lemma:Involution}, if the component $I_p$ is in involution, then the component~$I_{p+k}$ is in involution for all $k\geq 0$.

\begin{proposition}[{\cite[\textsection 56 \& \textsection 57]{Janet29}}]
The characters of a homogeneous component $I_p$ satisfy the two following properties 
\begin{enumerate}[{\bf i)}]
\item $\sigma_1\geq \sigma_2\geq \ldots \geq \sigma_n$.
\item if $I_p\neq \{0\}$, then $\sigma_n=0$.
\end{enumerate}
\end{proposition}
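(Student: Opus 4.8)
The plan is to read the characters off the multiplication maps by linear forms, after making explicit the hypothesis — implicit in the definition, as in Cartan's theory — that the coordinates $x_1,\ldots,x_n$ are chosen \emph{generically}. Without such a choice both assertions fail: for $I=\ideal{x_1^2}$ in $\K[x_1,x_2]$, computed in the coordinates in which $I_2=\K x_1^2$, one finds $\sigma_1=\sigma_2=1$, violating {\bf ii)}. So first I would set $S=\K[x_1,\ldots,x_n]$, $A=S/I$, and identify $A_q=S_q/I_q$. Since $x_iI_{p-1}\subseteq I_p$, multiplication by a linear form $\ell$ induces $\ell\colon A_{p-1}\to A_p$, and the image in $A_p$ of $I_p+\sum_{i\leq h}S_{p-1}x_i$ is exactly $\bar L_h:=\sum_{i\leq h}x_iA_{p-1}$. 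By the defining relations this gives $\dim\bar L_h=\sigma_1+\ldots+\sigma_h$, hence $\sigma_h=\dim\bar L_h-\dim\bar L_{h-1}$.

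For the monotonicity {\bf i)}, fix $1\leq h\leq n-1$ and set $Q=A_p/\bar L_{h-1}$ with projection $\pi\colon A_p\to Q$. Put $\bar H=\pi(x_hA_{p-1})$ and $\bar G=\pi(x_{h+1}A_{p-1})$, so that $\sigma_h=\dim\bar H$ and $\sigma_h+\sigma_{h+1}=\dim(\bar H+\bar G)$, whence
\[
\sigma_{h+1}=\dim(\bar H+\bar G)-\dim\bar H=\dim\bar G-\dim(\bar G\cap\bar H).
\]
The crucial point is that the rank of $\pi\circ(\ell\,\cdot)\colon A_{p-1}\to Q$, as a function of the linear form $\ell$, attains a maximal value $r_Q$ on a Zariski-dense open set of forms, while $Q$ itself depends only on $x_1,\ldots,x_{h-1}$. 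In generic coordinates the forms $x_h$ and $x_{h+1}$ are generic and play symmetric roles relative to $Q$, so $\dim\bar H=\dim\bar G=r_Q$. Therefore $\sigma_h=r_Q$ and $\sigma_{h+1}=r_Q-\dim(\bar G\cap\bar H)\leq r_Q=\sigma_h$, which is {\bf i)}.

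For {\bf ii)}, note that $\bar L_n=A_p$, so $\sigma_n=\dim A_p-\dim\bar L_{n-1}=\dim(A_p/\bar L_{n-1})$. Choosing the generic coordinates so that $x_1,\ldots,x_{n-1}$ are the first $n-1$ coordinate forms, the space $\sum_{i<n}S_{p-1}x_i$ is the span of all degree-$p$ monomials other than $x_n^p$, so that $S_p/\sum_{i<n}S_{p-1}x_i\cong\K x_n^p$ and hence $A_p/\bar L_{n-1}\cong\K x_n^p/(\text{image of }I_p)$. Picking any $f\neq 0$ in $I_p$, its coefficient of $x_n^p$ equals $f(0,\ldots,0,1)$; after a generic change of coordinates this becomes $f(v)$ for a generic vector $v$, hence is nonzero since $f\neq 0$. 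Thus the image of $I_p$ already fills $\K x_n^p$, giving $A_p/\bar L_{n-1}=0$ and $\sigma_n=0$.

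The main obstacle is the genericity bookkeeping behind {\bf i)}: one must verify that the loci of coordinate systems on which these multiplication maps attain maximal rank at \emph{every} stage $h$ are simultaneously dense and open, so that a single generic choice of $x_1,\ldots,x_n$ secures $\dim\bar H=\dim\bar G=r_Q$ for all $h$ at once. This follows from the openness of maximal-rank loci together with a routine fibred-genericity argument, but it is precisely the place where the generic choice of coordinates is indispensable; once it is granted, the remainder is elementary linear algebra on $A_{p-1}$ and $A_p$.
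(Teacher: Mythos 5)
Your proof is correct, but there is no argument in the paper to compare it against: the proposition is stated there with a bare citation to \S\S\,56--57 of Janet's monograph and no proof is given, so your write-up supplies something the survey omits. Two points deserve emphasis. First, your observation that the statement fails for the definition as literally printed (a fixed coordinate system) is right, and it is the essential point: for $I=\ideal{x_1^2}$ in $\K[x_1,x_2]$ at $p=2$ one indeed gets $\sigma_1=\sigma_2=1$, contradicting \textbf{ii)}, and for $I=\ideal{x_1}$ at $p=2$ one gets $\sigma_1=0<1=\sigma_2$, contradicting \textbf{i)} as well; Janet's characters, like Cartan's, are implicitly computed in generic coordinates, a convention the paper leaves silent, so making it explicit is not pedantry but a necessary repair of the statement. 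Second, granting genericity, your reduction is sound: the translation $\sigma_1+\cdots+\sigma_h=\dim\bigl(\sum_{i\leq h}x_iA_{p-1}\bigr)$ inside $A=\K[x_1,\ldots,x_n]/I$ is exactly the right reading of the defining relations; the identity $\sigma_{h+1}=\dim\bar G-\dim(\bar G\cap\bar H)$ combined with $\dim\bar H=\dim\bar G=r_Q$ for generic coordinates yields \textbf{i)}, and the evaluation argument (the coefficient of $x_n^p$ in a nonzero $f\in I_p$ is the value of $f$ at the point dual to $x_n$, which is generic after a generic coordinate change, hence nonzero) yields \textbf{ii)}. The bookkeeping you defer is genuinely routine: for each $h$ the condition that $(f_1,\ldots,f_h)\mapsto\sum_{i\leq h}x_if_i$, as a linear map $A_{p-1}^{\oplus h}\to A_p$, attains its maximal rank is Zariski-open and dense in the space of coordinate systems, and the same holds for the tuples obtained by replacing $x_h$ with $x_{h+1}$; these finitely many open dense conditions have open dense intersection, so a single generic choice of coordinates secures every step at once (this uses that $\K$ is infinite, which the standing characteristic-zero hypothesis guarantees, and lower semicontinuity of rank, which also shows that the generic value $r_Q$ is the global maximum, as your inequality requires).
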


\subsubsection{Polynomiality of characteristic function}
Suppose that the homogeneous component $I_p$  is in involution. We show that the characteristic function $\chi(P)$ defined in~\ref{SSS:CharacteristicFunctions} is polynomial for $P\geq p$.
Using Lemma~\ref{Lemma:Involution}, we show by induction that for any $1\leq h< n$ and any positive integer $\lambda$, we have the following relation:
\[ 
\sigma_h^{(\lambda)}=\sum_{k=0}^{n-h-1} \binom{\lambda+k-1}{k}\sigma_{h+k}. 
\]
The codimension of $I_{p+\lambda}$ in $\K[x_1,\cdots, x_n]_{p+\lambda}$ is  given by
\begin{align*} 
\sum_{h=1}^{n-1}\sigma_h^{(\lambda)}=
&\sum_{h=1}^{n-1} \sum_{k=0}^{n-h-1}\binom{\lambda+k-1}{k}\sigma_{h+k} 
=
\sum_{i=1}^{n-1} \left(\sum_{k=0}^{i-1}\binom{\lambda+k-1}{k}\right) \sigma_i \\
=
&\sum_{i=1}^{n-1} \left(\sum_{k=0}^{i-1}\binom{P-p+k-1}{k}\right) \sigma_i 
=
\sum_{i=1}^{n-1} \binom{P-p+i-1}{i-1} \sigma_i. 
\end{align*}
This proves the polynomiality of the characteristic function of the ideal $I$ for sufficiently big $p$.

\subsection{Conclusive remarks}

Recall that the so-called Cartan-K\"{a}hler theory is about the Pfaffian systems on a differentiable (or analytic) manifold and its aim is to judge whether a given system is prolongeable to a completely integrable system or an incompatible system. Their method relies on a geometrical argument, which is to construct integral submanifolds of the system inductively. Here, a step of the induction is to find an integral submanifold of dimension $i+1$ containing the integral submanifold of dimension $i$, and their theory does not allow one to see whether such step can be achieved or not.

Janet's method is, even if it works only locally, completely algebraic and algorithmic so that it partially completes the parts where one cannot treat with
Cartan-K\"{a}hler theory.

By these works, there are two seemingly different notions of involutivity; the one by G. Frobenius, G. Darboux and \'E. Cartan and the other by M. Janet. 
The fact is that at each step of the induction in the Cartan-K\"{a}hler theory, one has to study a system of PDE.
Its system is called in \emph{involution} (cf. compare those in Sections \ref{Subsubsection:InvolutionCartan} with \ref{Section:InvolutiveSystem}) if it can be written in a canonical system, as defined in~\ref{Subsubsection:CanonicalSystems}, if necessary after a change of coordinates. Following the algebraic definition of involutivity by M. Janet, several involutive methods were developed for polynomial and differential systems, \cite{Thomas37,Pommaret78}. In these approaches, a differential system is involutive when its non-multiplicative derivatives are consequences of multiplicative derivatives.
In~\cite{Gerdt97,GerdtBlinkovYuri98}, V. P. Gerdt gave an algebraic charaterization of the involutivity for polynomial systems. The Gerdt's approach is developed in the next section.

\section{Polynomial involutive bases}
\label{Section:PolynomialInvolutiveBases}

In this section, we present the algebraic definition of involutivity for polynomial systems given by V.~P.~Gerdt in \cite{Gerdt97,GerdtBlinkovYuri98}. In particular, we relate the notion of involutive basis for a polynomial ideal to the notion of Gr\"obner basis.

\subsection{Involutive reduction on polynomials}

\subsubsection{Involutive basis}
Recall that a \emph{monomial ideal} $I$ of $\K[x_1,\ldots,x_n]$ is an ideal generated by monomials.
An \emph{involutive basis} of the ideal $I$ with respect to an involutive division division $\div{I}$ is an involutive set of monomials $\Ur$ that generates $I$.
By Dickson Lemma,~\cite{Dickson13}, any monomial ideal $I$ admits a finite set of generators. When the involutive division $\div{I}$ is noetherian as defined in~\ref{SSS:Noetherianity}, this generating set admits a finite $\div{I}$-completion that forms an involutive basis of the ideal~$I$. As a consequence, we deduce the following result.

\begin{proposition}
Let $\div{I}$ be a noetherian involutive division on $\Mr(x_1,\ldots,x_n)$. Any monomial ideal of~$\K[x_1,\ldots,x_n]$ admits an $\div{I}$-involutive basis.
\end{proposition}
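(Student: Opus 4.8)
The plan is to obtain the involutive basis as the $\div{I}$-completion of a finite generating set, reducing the statement to the two finiteness inputs already available: Dickson's Lemma in the form of Lemma~\ref{Lemma:Janet1}, and the noetherianity hypothesis on $\div{I}$ recalled in~\ref{SSS:Noetherianity}.

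First I would fix a monomial ideal $I$ and extract a finite set of monomial generators. By Lemma~\ref{Lemma:Janet1} the set of monomials of $I$ that are not proper multiples of other monomials of $I$ is finite; call it $G$. This $G$ generates $I$, and since $I$ is a monomial ideal its monomials are exactly $\cone(G)$, i.e. $\cone(G) = I \cap \Mr(x_1,\ldots,x_n)$.

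Next, since $\div{I}$ is noetherian, the finite set $G$ admits a finite $\div{I}$-completion $\widetilde{G}$. By the definition of completion, $\widetilde{G}$ is $\div{I}$-involutive, contains $G$, and satisfies $\cone(\widetilde{G}) = \cone(G)$. It remains only to see that $\widetilde{G}$ still generates $I$: two sets of monomials generate the same monomial ideal precisely when they have the same cone, so $\ideal{\widetilde{G}}$ has monomial set $\cone(\widetilde{G}) = \cone(G) = I \cap \Mr(x_1,\ldots,x_n)$, whence $\ideal{\widetilde{G}} = I$. Thus $\widetilde{G}$ is an $\div{I}$-involutive set of monomials generating $I$, that is, an $\div{I}$-involutive basis of $I$.

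The statement carries no genuine obstacle: it is a direct corollary of combining the two finiteness results. The only point requiring a little care is the dictionary between a generating set of monomials and the monomial ideal it generates --- namely that $\cone(\,\cdot\,)$ records exactly the monomials of the ideal --- so that the cone-preservation clause in the definition of $\div{I}$-completion translates into preservation of the generated ideal $I$.
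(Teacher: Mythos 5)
Your proposal is correct and takes essentially the same approach as the paper: extract a finite generating set of the monomial ideal via Dickson's Lemma (Lemma~\ref{Lemma:Janet1}), then invoke noetherianity of $\div{I}$ to obtain a finite $\div{I}$-completion, which is the desired involutive basis. You merely spell out two points the paper leaves implicit, namely the choice of the finite (minimal) generating set and the fact that preservation of the cone translates into preservation of the generated ideal.
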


The objective of this section is to show how to extend this result to polynomial ideals with respect to a monomial order. In the remainder of this subsection we assume that a monomial order $\preccurlyeq$ is fixed on~$\Mr(x_1,\ldots,x_n)$.

\subsubsection{Multiplicative variables for a polynomial}
Let $\div{I}$ be an involutive division on $\Mr(x_1,\ldots,x_n)$.
Let~$F$ be a set of polynomials of $\K[x_1,\ldots,x_n]$ and~$f$ be a polynomial in $F$. We define the set of \emph{$\div{I}$-multiplicative} (resp. \emph{$\div{I}$-non-multiplicative}) \emph{variables} of the polynomial $f$ with respect to $F$  and the monomial order $\preccurlyeq$ by setting
\[
\mult_{\div{I},\preccurlyeq}^F(f) = \mult_\div{I}^{\lm_{\preccurlyeq}(F)}(\lm_{\preccurlyeq}(f)),
\quad 
(\,\text{resp.}\;\; \nonmult_{\div{I},\preccurlyeq}^F(f) = \nonmult_\div{I}^{\lm_{\preccurlyeq}(F)}(\lm_{\preccurlyeq}(f))\,).
\]
Note that the $\div{I}$-multiplicative variables depend on the monomial order $\preccurlyeq$ used to determine leading monomials of polynomials of $F$.

\subsubsection{Polynomial reduction}
The polynomial division can be describe as a rewriting operation as follows.
Given polynomials $f$ and $g$ in $\K[x_1,\ldots,x_n]$, we say that $f$ is \emph{reducible modulo $g$ with respect to~$\preccurlyeq$}, if there is a term $\lambda u$ in $f$ whose monomial $u$ is divisible by $\lm_\preccurlyeq(g)$ for the usual monomial division. In that case, we denote such a reduction by $f \ofl{g_{\preccurlyeq}} h$, where
\[ 
h = f - \frac{\lambda u}{\lt_\preccurlyeq(g)}g.
\]
For a set $G$ of polynomials of $\K[x_1,\ldots,x_n]$, we define a rewriting system corresponding to the division modulo $G$ by considering the relation reduction $\ofl{G_{\preccurlyeq}}$ defined by 
\[
\ofl{G_{\preccurlyeq}} = \bigcup_{g\in G} \ofl{g_{\preccurlyeq}}.
\]
We will denote by $\overset{\displaystyle G_{\preccurlyeq}}{\fll^\ast}$ the reflexive and transitive closure of the relation $\ofl{G_{\preccurlyeq}}$.

\subsubsection{Involutive reduction}
In a same way, we define a notion of reduction with respect to an involutive division~$\div{I}$ on $\Mr(x_1,\ldots,x_n)$. Let $g$ be a polynomial in $\K[x_1,\ldots,x_n]$.
A polynomial $f$ in $\K[x_1,\ldots,x_n]$ is said to be \emph{$\div{I}$-reducible modulo $g$} with respect to the monomial order $\preccurlyeq$, if there is a term $\lambda u$ of $f$, with~$\lambda\in\K-\{0\}$ and $u\in \Mr(x_1,\ldots,x_n)$, such that 
\[
u=\lm_\preccurlyeq(g)v
\quad\text{and}\quad
v\in \Mr(\mult_\div{I}^{\lm_\preccurlyeq(G)}(g)).
\] 
Such a $\div{I}$-reduction is denoted by $f\ofl{g_\preccurlyeq}_\div{I} \;h$, where
\[
h = f - \frac{\lambda}{\lc_\preccurlyeq(g)}gv = f - \frac{\lambda u}{\lt_\preccurlyeq(g)}g.
\]

\subsubsection{Involutive normal forms}
Let $G$ be a set of polynomials of $\K[x_1,\ldots,x_n]$.
A polynomial $f$ is said to be \emph{$\div{I}$-reducible modulo $G$} with respect to the monomial order $\preccurlyeq$, if there exists a polynomial $g$ in $G$ such that $f$ is $\div{I}$-reducible modulo $g$. We will denote by $\ofl{G_\preccurlyeq}_\div{I}$ this reduction relation defined by 
\[
\ofl{G_\preccurlyeq}_\div{I} = \bigcup_{g\in G} \ofl{g_\preccurlyeq}_\div{I}.
\]
The polynomial $f$ is said to be in \emph{$\div{I}$-irreducible modulo $G$} if it is not $\div{I}$-reducible modulo~$G$.  A \emph{$\div{I}$-normal form of a polynomial $f$} is a $\div{I}$-irreducible polynomial $h$ such that there is a sequence of reductions from $f$ to $h$:
\[
f \ofl{G_\preccurlyeq}_\div{I}\; f_1 \ofl{G_\preccurlyeq}_\div{I} \; f_2 \ofl{G_\preccurlyeq}_\div{I} \; \ldots \; \ofl{G_\preccurlyeq}_\div{I} \; h,
\]

The procedure ${\bf InvReduction}_\div{I,\preccurlyeq}(f,G)$ computes a normal form of $f$ modulo $G$ with respect to the division $\div{I}$. The proofs of its correctness and termination can be achieved as in the case of the division procedure for the classical polynomial division, see for instance {\cite[Proposition 5.22]{BeckerWeispfenning93}}.

\begin{algorithm}
\SetAlgoLined
\KwIn{a polynomial $f$ in $\K[x_1,\ldots,x_n]$ and a finite subset $G$ of $\K[x_1,\ldots,x_n]$.}

\BlankLine

\Begin{

$h \leftarrow f$

\BlankLine

\While{exist $g$ in $G$ and a term $t$ of $h$ such that $\lm_\preccurlyeq(g)|_\div{I}^{\lm_\preccurlyeq(G)} \frac{t}{\lc_\preccurlyeq(t)}$}{

\BlankLine

{\bf choose} such a $g$ 

$h \leftarrow h - \frac{t}{\lt_\preccurlyeq(g)}g$ 

}}

\BlankLine

\KwOut{$h$ a $\div{I}$-normal form of the polynomial $f$ with respect to the monomial order $\preccurlyeq$}

\caption{${\bf InvReduction}_{\div{I},\preccurlyeq}(f,G)$}
\end{algorithm}

\subsubsection{Remarks}
Note that the involutive normal form of a polynomial $f$ is not unique in general, it depends on the order in which the reductions are applied. Suppose that, for each polynomial $f$ we have a $\div{I}$-normal form with respect to the monomial order $\preccurlyeq$, that is denoted by $\normf_{\div{I},\preccurlyeq}^G(f)$.
Denote by~$\normf_{\preccurlyeq}^G(f)$ a normal form of a polynomial $f$ obtained by the classical division procedure. In general, the equality~$\normf_\preccurlyeq^G(f)=\normf^G_{\div{I},\preccurlyeq}(f)$ does not hold. Indeed, suppose that $G=\{x_1,x_2\}$ and consider the Thomas division $\div{T}$ defined in \ref{SSS:ThomasDivision}. We have $\normf_\preccurlyeq^G(x_1x_2)=0$, while~$\normf^G_{\div{T},\preccurlyeq}(x_1x_2)=x_1x_2$ because the monomial~$x_1x_2$ is a $\div{T}$-irreducible modulo $G$.

\subsubsection{Autoreduction}
\label{Subsubsection:PolynomialAutoreduction}
Recall from~\ref{Subsubsection:MonomialAutoreduction} that a set of monomials $\Ur$ is $\div{I}$-autoreduced with respect to an involutive division $\div{I}$ if it does not contain a monomial $\div{I}$-divisible by another monomial of $\Ur$. In that case, any monomial in $\Mr(x_1,\ldots,x_n)$ admits at most one $\div{I}$-involutive divisor in $\Ur$. 

A set $G$ of polynomials of $\K[x_1,\ldots,x_n]$ is said to be \emph{$\div{I}$-autoreduced} with respect to the monomial order $\preccurlyeq$, if it satisfies the two following conditions:
\begin{enumerate}[{\bf i)}]
\item (\emph{left $\div{I}$-autoreducibility}) the set of leading monomials $\lm_\preccurlyeq(G)$ is $\div{I}$-autoreduced,
\item (\emph{right $\div{I}$-autoreducibility}) for any $g$ in $G$, there is no term $\lambda u \neq \lt_\preccurlyeq(g)$ of $g$, with $\lambda\neq 0$ \linebreak and~$u\in \cone_\div{I}(\lm_\preccurlyeq(G))$.
\end{enumerate}

Note that the condition {\bf i)}, (resp. {\bf ii)}) corresponds to the left-reducibility (resp. right-reducibility) property given in~\ref{Subsubsection:CanonicalSystems}.
Any finite set $G$ of polynomials of $\K[x_1,\ldots,x_n]$ can be transformed into a finite $\div{I}$-autoreduced set that generates the same ideal by Procedure~\ref{AutoreductionProcedure2}. The proofs of correctness and termination are immediate consequences of the property of involutive division.

\begin{algorithm}
\SetAlgoLined
\KwIn{$G$ a finite subset of $\K[x_1,\ldots,x_n]$.}

\BlankLine

\Begin{

$H \leftarrow G$

$H' \leftarrow \emptyset$

\BlankLine

\While{exists $h\in H$ and $g\in H\setminus\{h\}$ such that $h$ is $\div{I}$-reducible modulo $g$ with respect to $\preccurlyeq$}{

\BlankLine

{\bf choose} such a $h$ 

$H' \leftarrow H\setminus\{h\}$ 

$h' \leftarrow \normf_{\div{I},\preccurlyeq}^{H'}(h)$

\If{$h'=0$}{$H\leftarrow H'$}

\Else{$H\leftarrow H'\cup\{h'\}$}

}}

\BlankLine

\KwOut{$H$ an $\div{I}$-autoreduced set generating the same ideal as $G$ does.}

\caption{${\bf Autoreduce}_{\div{I},\preccurlyeq}(G)$}
\label{AutoreductionProcedure2}
\end{algorithm}

\begin{proposition}[{\cite[Theorem 5.4]{GerdtBlinkovYuri98}}]
\label{Proposition:decompositionAutoreduite}
Let $G$ be an $\div{I}$-autoreduced set of polynomials \linebreak of $\K[x_1,\ldots,x_n]$ and $f$ be a polynomial in $\K[x_1,\ldots,x_n]$. Then $\normf_{\div{I},\preccurlyeq}^G(f)=0$ if and only if the polynomial~$f$ can be written in the form
\[
f = \sum_{i,j} \beta_{i,j}g_iv_{i,j},
\]
where $g_i\in G$, $\beta_{i,j}\in \K$ and $v_{i,j}\in \Mr(\mult_\div{I}^{\lm_\preccurlyeq(G)}(\lm_\preccurlyeq(g_i)))$, with $\lm_\preccurlyeq(v_{i,j})\neq \lm_\preccurlyeq(v_{i,k})$ if $j\neq k$.
\end{proposition}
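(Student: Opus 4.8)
The plan is to prove the two implications separately. Throughout I will use two structural consequences of the $\div{I}$-autoreducedness of $G$: first, by the uniqueness of involutive divisors recalled in~\ref{Subsubsection:PolynomialAutoreduction}, every monomial of $\Mr(x_1,\ldots,x_n)$ has at most one $\div{I}$-involutive divisor in $\lm_\preccurlyeq(G)$; second, distinct elements of $G$ have distinct leading monomials, since $\lm_\preccurlyeq(g)=\lm_\preccurlyeq(g')$ would give $\lm_\preccurlyeq(g)\,|_\div{I}^{\lm_\preccurlyeq(G)}\,\lm_\preccurlyeq(g')$ by reflexivity, contradicting left $\div{I}$-autoreducibility unless $g=g'$.

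For the implication $\normf_{\div{I},\preccurlyeq}^G(f)=0\Rightarrow$ representation, I would read the representation off a reduction sequence $f=f_0,f_1,\ldots,f_N=0$ witnessing $\normf_{\div{I},\preccurlyeq}^G(f)=0$. At each step one subtracts $\frac{\lambda_k u_k}{\lt_\preccurlyeq(g_{i_k})}g_{i_k}$ where, by the definition of $\div{I}$-reduction, $u_k=\lm_\preccurlyeq(g_{i_k})v_k$ with $v_k\in\Mr(\mult_\div{I}^{\lm_\preccurlyeq(G)}(\lm_\preccurlyeq(g_{i_k})))$; writing $\beta_k=\lambda_k/\lc_\preccurlyeq(g_{i_k})$, this subtracted polynomial is exactly $\beta_k g_{i_k}v_k$. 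Telescoping and using $f_N=0$ yields $f=\sum_k\beta_k g_{i_k}v_k$ with every $v_k$ involutively multiplicative. Since the $v_k$ are monomials, the distinctness requirement $\lm_\preccurlyeq(v_{i,j})\neq\lm_\preccurlyeq(v_{i,k})$ for $j\neq k$ is just distinctness of these monomials for fixed $g_i$, which I obtain by gathering, for each $g_i$, all contributions carrying the same $v$ into one coefficient and discarding those that vanish. This direction uses only the definitions.

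For the converse I would argue by Noetherian induction on $w:=\max_{i,j}\lm_\preccurlyeq(g_i)v_{i,j}$ (after discarding vanishing coefficients), which is legitimate because $\preccurlyeq$ is a well-order; the base case is the empty sum $f=0$. The crux---and the step I expect to be the main obstacle---is to rule out cancellation at the top, that is, to show that $w$ is attained by a single index pair and hence that $\lm_\preccurlyeq(f)=w$. Suppose $\lm_\preccurlyeq(g_i)v_{i,j}=w$ with $v_{i,j}$ multiplicative for $\lm_\preccurlyeq(g_i)$; then $\lm_\preccurlyeq(g_i)$ is an $\div{I}$-involutive divisor of $w$ in $\lm_\preccurlyeq(G)$, hence unique, so all such $g_i$ share one leading monomial and, by the second structural fact above, equal one and the same $g_{i_0}$. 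Then $v_{i_0,j}=w/\lm_\preccurlyeq(g_{i_0})$ is forced and the distinctness clause leaves exactly one index $j_0$. Every other summand has all its monomials $\preceq\lm_\preccurlyeq(g_i)v_{i,j}\prec w$, so the coefficient of $w$ in $f$ is $\beta_{i_0,j_0}\lc_\preccurlyeq(g_{i_0})\neq 0$ and indeed $\lm_\preccurlyeq(f)=w$.

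Once this is established the induction closes quickly: $f$ is $\div{I}$-reducible modulo $g_{i_0}$ at its leading term, the reduct being $f'=f-\beta_{i_0,j_0}g_{i_0}v_{i_0,j_0}=\sum_{(i,j)\neq(i_0,j_0)}\beta_{i,j}g_iv_{i,j}$. This $f'$ carries a representation of the same form (a sub-collection, so the distinctness clause persists) whose maximum is strictly below $w$; by the induction hypothesis $f'$ involutively reduces to $0$, and prepending the first step shows $f$ does as well. Finally I would note that, the involutive divisor in $\lm_\preccurlyeq(G)$ being unique for autoreduced $G$, the reduction relation is confluent and $\normf_{\div{I},\preccurlyeq}^G(f)$ is well defined, so \emph{reduces to $0$} coincides with $\normf_{\div{I},\preccurlyeq}^G(f)=0$, completing the equivalence.
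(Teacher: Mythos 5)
Your proof is correct and follows essentially the same route as the paper: the forward implication by telescoping a reduction sequence, and the converse by showing (via uniqueness of involutive divisors for an autoreduced set) that no cancellation can occur at the top, so the leading term of the sum is involutively reducible, and then iterating downward. Your version merely makes explicit two points the paper leaves terse --- the Noetherian induction on the maximal monomial $\lm_\preccurlyeq(g_i)v_{i,j}$ and the pairwise distinctness of these monomials --- so it is a faithful, slightly more detailed rendering of the same argument.
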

\begin{proof}
Suppose that $\normf_{\div{I},\preccurlyeq}^G(f)=0$, then there exists a sequence of involutive reductions modulo $G$:
\[
f =f_0\ofl{g_1}_\div{I}\; f_1 \ofl{g_2}_\div{I}\; f_2 \ofl{g_3}_\div{I} \; \ldots \; \ofl{g_{k-1}}_\div{I} \; f_k=0,
\]
terminating on $0$. For any $1\leq i \leq k$, we have $f_{i} = f_{i-1} - \frac{\lambda_{i,j}}{\lc_\preccurlyeq(g_{i})}g_{i}v_{i,j}$, with $v_{i,j}\in \Mr(\mult_\div{I}^{\lm_\preccurlyeq(G)}(\lm_\preccurlyeq(g_i)))$. This show the equality.

Conversely, suppose that $f$ can be written in the given form. Then the leading monomial $\lm_\preccurlyeq(f)$ admits an involutive $\div{I}$-divisor in $\lm_\preccurlyeq(G)$. Indeed, the leading monomial of the decomposition of $f$ has the following form:
\[
\lm_\preccurlyeq\left(\sum_{i,j} g_iv_{i,j}\right)=\lm_\preccurlyeq(g_{i_0})v_{i_0,j_0}.
\]
The monomial $\lm_\preccurlyeq(g_{i_0})$ is an involutive divisor of $\lm_\preccurlyeq(f)$ and by autoreduction hypothesis, such a divisor is unique. Hence the monomial $\lm_\preccurlyeq(g_{i_0})v_{i_0,j_0}$ does not divide other monomial of the form $\lm_\preccurlyeq(g_i)v_{i,j}$. We apply the reduction $g_{i_0}v_{i_0,j_0} \ofl{{g_{i_0}}_\preccurlyeq}_{\div{I}}\; 0$ on the decomposition. In this way, we define a sequence of reductions ending on $0$. This proves that $\normf_{\div{I},\preccurlyeq}^G(f)=0$.
\end{proof}

\subsubsection{Unicity and additivity of involutive normal forms}
\label{SSS:UnicityAdditivityNormalForms}
From decomposition~\ref{Proposition:decompositionAutoreduite}, we deduce two important properties on involutive normal forms. Let $G$ be a $\div{I}$-autoreduced set of polynomials of~$\K[x_1,\ldots,x_n]$ and $f$ be a polynomial. Suppose that $h_1=\normf_{\div{I},\preccurlyeq}^G(f)$ and $h_2=\normf_{\div{I},\preccurlyeq}^G(f)$ are two involutive normal forms of $f$. From the involutive reduction procedure that computes this two normal forms, we deduces two decompositions
\[
h_1 = f - \sum_{i,j} \beta_{i,j}g_i v_{i,j},
\qquad
h_2 = f - \sum_{i,j} \beta'_{i,j}g_i v'_{i,j}.
\]
As a consequence, $h_1-h_2$ admits a decomposition as in Proposition~\ref{Proposition:decompositionAutoreduite}, hence $\normf_{\div{I},\preccurlyeq}^G(h_1-h_2)=0$. 
The polynomial $h_1-h_2$ being in normal form, we deduce that $h_1=h_2$. This shows the unicity of the involutive normal form modulo an autoreduced set of polynomials.

In a same manner we prove the following additivity formula for any polynomial $f$ and $f'$:
\[
\normf_{\div{I},\preccurlyeq}^G(f+f')=\normf_{\div{I},\preccurlyeq}^G(f)+\normf_{\div{I},\preccurlyeq}^G(f').
\]

\subsection{Involutive bases}

We fix a monomial order $\preccurlyeq$ on~$\Mr(x_1,\ldots,x_n)$.

\subsubsection{Involutive bases}
Let $I$ be an ideal of $\K[x_1,\ldots,x_n]$. A set $G$ of polynomials of $\K[x_1,\ldots,x_n]$ is an \emph{$\div{I}$-involutive basis} of the ideal $I$ with respect the monomial order $\preccurlyeq$, if $G$ is $\div{I}$-autoreduced and satisfies the following property: 
\[
\forall g \in G,\;
\forall u\in \Mr(x_1,\ldots,x_n),\quad
\normf_{\div{I},\preccurlyeq}^G(gu)=0.
\]
In other words, for any polynomial $g$ in $G$ and monomial $u$ in $\Mr(x_1,\ldots,x_n)$, there is a sequence of involutive reductions:
\[
gu\ofl{{g_1}_\preccurlyeq}_\div{I} \; f_1 \ofl{{g_2}_\preccurlyeq}_\div{I} \; f_2 \ofl{{g_3}_\preccurlyeq}_\div{I} \; \ldots \; \ofl{{g_{k-1}}_\preccurlyeq}_\div{I} \; 0,
\]
with $g_i$ in $G$.
In particular, we recover the notion of involutive sets of monomials given in  \ref{SSS:InvolutiveSet}. Indeed, if $G$ is an $\div{I}$-involutive basis, then $\lm_\preccurlyeq(G)$ is an $\div{I}$-involutive set of monomials of~$\Mr(x_1,\ldots,x_n)$.

\begin{proposition}
\label{Proposition:reducibilityInvolutiveReducibility}
Let $\div{I}$ be an involutive division on $\K[x_1,\ldots,x_n]$ and $G$ be a $\div{J}$-involutive subset of~$\K[x_1,\ldots,x_n]$.
A polynomial of $\K[x_1,\ldots,x_n]$ is reducible with respect to $G$ if and only if it is $\div{I}$-reducible modulo $G$.
\end{proposition}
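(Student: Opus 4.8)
The plan is to prove the two implications separately, since one is a near-tautology from the axioms of an involutive division while the other is where the involutivity of $G$ actually enters. Throughout I read the hypothesis on $G$ as: $G$ is $\div{I}$-involutive (the index clash in the statement being a typo), so that the remark preceding the proposition applies, namely that $\lm_\preccurlyeq(G)$ is then an $\div{I}$-involutive set of monomials in the sense of~\ref{SSS:InvolutiveSet}.

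First I would dispatch the implication from $\div{I}$-reducibility to ordinary reducibility, which needs nothing beyond condition~{\bf i)} of the definition of an involutive division. If $f$ is $\div{I}$-reducible modulo some $g$ in $G$, there is a term $\lambda u$ of $f$ with $u=\lm_\preccurlyeq(g)v$ and $v\in\Mr(\mult_\div{I}^{\lm_\preccurlyeq(G)}(\lm_\preccurlyeq(g)))$, so in particular $\lm_\preccurlyeq(g)|_\div{I}^{\lm_\preccurlyeq(G)}u$; by {\bf i)} this gives $\lm_\preccurlyeq(g)\mid u$ for the classical division, and the very same term $\lambda u$ then witnesses that $f$ is reducible modulo $g$. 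For the converse, suppose $f$ is reducible modulo $G$, i.e. there exist $g\in G$ and a term $\lambda u$ of $f$ with $\lm_\preccurlyeq(g)\mid u$, so that $u\in\cone(\lm_\preccurlyeq(G))$. The key step is to invoke that $\lm_\preccurlyeq(G)$ is $\div{I}$-involutive, which by definition means $\cone(\lm_\preccurlyeq(G))=\cone_\div{I}(\lm_\preccurlyeq(G))$; hence $u\in\cone_\div{I}(\lm_\preccurlyeq(G))$, so some $g'\in G$ satisfies $\lm_\preccurlyeq(g')|_\div{I}^{\lm_\preccurlyeq(G)}u$, and the term $\lambda u$ exhibits $f$ as $\div{I}$-reducible modulo $g'$.

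The point requiring care — and the only real content of the argument — is that the involutive divisor $g'$ of $u$ need not be the element $g$ that provides ordinary divisibility; it may be a different member of $G$. This is precisely why the cone equality $\cone(\lm_\preccurlyeq(G))=\cone_\div{I}(\lm_\preccurlyeq(G))$ is the right instrument: it produces \emph{some} involutive divisor without forcing it to be $g$. If I wanted a self-contained proof instead of citing the preceding remark, the subpoint to establish would be that the polynomial involutive-basis condition $\normf_{\div{I},\preccurlyeq}^G(gu)=0$ forces $\lm_\preccurlyeq(g)u$ to admit an $\div{I}$-involutive divisor in $\lm_\preccurlyeq(G)$; this holds because an involutive reduction never raises the leading monomial, so the leading monomial $\lm_\preccurlyeq(g)u$ of $gu$ can be cancelled only by directly $\div{I}$-reducing its leading term, which yields the desired involutive divisor.
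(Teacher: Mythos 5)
Your proof is correct; the forward implication is handled exactly as in the paper (immediate from condition {\bf i)} of an involutive division), but your converse takes a genuinely different route. The paper works at the level of polynomials: writing $u=\lm_\preccurlyeq(g)v$, it invokes the involutive basis property $\normf_{\div{I},\preccurlyeq}^G(gv)=0$ and then Proposition~\ref{Proposition:decompositionAutoreduite} to decompose $gv=\sum_{i,j}\beta_{i,j}g_iv_{i,j}$ with multiplicative cofactors, reading off the involutive divisor of $u$ from the leading monomial $\lm_\preccurlyeq(g_{i_0})v_{i_0,j_0}$ of that decomposition. You instead pivot on the monomial-level statement that $\lm_\preccurlyeq(G)$ is an $\div{I}$-involutive set, i.e. $\cone(\lm_\preccurlyeq(G))=\cone_\div{I}(\lm_\preccurlyeq(G))$, which converts ordinary divisibility of $u$ into involutive divisibility in one step. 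Be aware that this statement is only asserted, not proved, in the remark preceding the proposition, so citing it alone would merely shift the burden (its natural justification is precisely the paper's argument via Proposition~\ref{Proposition:decompositionAutoreduite}); but you close that gap with your self-contained sketch, which is sound: in a reduction sequence witnessing $\normf_{\div{I},\preccurlyeq}^G(gv)=0$, every step replaces a term by strictly smaller ones, so the leading term $\lm_\preccurlyeq(g)v$ can only vanish by being directly $\div{I}$-reduced, and that direct reduction exhibits the involutive divisor. This alternative is slightly more elementary than the paper's, since it avoids the decomposition proposition entirely and rests only on the rewriting-theoretic fact that reductions never raise the leading monomial; what the paper's route buys is brevity, as it reuses an already-established result. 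Both arguments correctly register the point you emphasize, namely that the involutive divisor $g'$ need not coincide with the classical divisor $g$.
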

\begin{proof}
Let $f$ be a polynomial of $\K[x_1,\ldots,x_n]$. 
By definition of involutive reduction, if $f$ is $\div{I}$-reducible modulo $G$, then it is reducible for the relation $\ofl{G_\preccurlyeq}$. Conversely, suppose that $f$ is reducible by a polynomial $g$ in $G$. That is there exists a term $\lambda u$ in $f$, where $\lambda$ is a nonzero scalar and $u$ is a monomial of~$\Mr(x_1,\ldots,x_n)$ such that $u=\lm_\preccurlyeq(g)v$, where $v\in \Mr(x_1,\ldots,x_n)$.
The set $G$ being involutive, we have~$\normf_{\div{I},\preccurlyeq}^G(gv)=0$. Following Proposition~\ref{Proposition:decompositionAutoreduite}, the polynomial $gv$ can written in the form:
\[
gv = \sum_{i,j} \beta_{i,j}g_i v_{i,j},
\]
where $g_i\in G$, $\beta_{i,j}\in \K$  and $v_{i,j}\in \Mr(\mult_\div{I}^{\lm_\preccurlyeq(G)}(\lm_\preccurlyeq(g_i)))$. In particular, this shows that the monomial $u$ admits an involutive divisor in $G$.
\end{proof}

\subsubsection{Unicity of normal forms}
\label{SSS:UnicityNormalForms}

Let us mention an important consequence of Proposition~\ref{Proposition:reducibilityInvolutiveReducibility} given in~{\cite[Theorem 7.1]{GerdtBlinkovYuri98}}. Let $G$ be a $\div{J}$-involutive subset of $\K[x_1,\ldots,x_n]$, for any reduction procedure that computes a normal form $\normf_\preccurlyeq^G(f)$ of a polynomial $f$ in $\K[x_1,\ldots,x_n]$ and any involutive reduction procedure that computes an involutive normal form $\normf_{\div{I},\preccurlyeq}^G(f)$, as a consequence of unicity of the involutive normal form and  Proposition~\ref{Proposition:reducibilityInvolutiveReducibility}, we have
\[
\normf_\preccurlyeq^G(f) = \normf_{\div{I},\preccurlyeq}^G(f).
\]

\subsubsection{Example}
We set $\Ur=\{x_1,x_2\}$. We consider the deglex order induced by $x_2>x_1$ and the Thomas division $\div{T}$. The monomial $x_1x_2$ is $\div{T}$-irreducible modulo $\Ur$. Hence, it does not admits zero as $\div{T}$-normal form and the set $\Ur$ cannot be an $\div{T}$-involutive basis of the ideal generated by $\Ur$. In turn the set~$\{x_1,x_2,x_1x_2\}$ is a $\div{T}$-involutive basis of the ideal generated by $\Ur$.

We now consider the Janet division $\div{J}$. We have $\deg_2(\Ur)=1$, $[0]=\{x_1\}$ and $[1]=\{x_2\}$. The $\div{J}$-multiplicative variables are given by the following table:
\begin{center}
\begin{tabular}{c|cc}
$u$ & $\mult_{\div{J}}^\Ur(u)$\\
\hline
$x_1$ & $x_1$ &\\
$x_2$ & $x_1$ & $x_2$\\
\end{tabular}
\end{center}
It follows that the monomial $x_1x_2$ is not $\div{J}$-reducible by $x_1$ modulo $\Ur$. However, it is $\div{J}$-reducible by $x_2$. Hence the set $\Ur$ form a $\div{J}$-involutive basis.

\bigskip

As an immediate consequence of involutive bases, the involutive reduction procedure provides a decision method of the ideal membership problem, as stated by the following result.

\begin{proposition}[{\cite[Corollary 6.4]{GerdtBlinkovYuri98}}]
\label{Proposition:formeNormaleNulle}
Let $I$ be an ideal of $\K[x_1,\ldots,x_n]$, and $G$ be an \linebreak $\div{I}$-involutive basis of $I$ with respect to a monomial order $\preccurlyeq$. For any polynomial $f$ of $\K[x_1,\ldots,x_n]$, we have $f \in I$ if and only if $\normf_{\div{I},\preccurlyeq}^G(f)=0$.
\end{proposition}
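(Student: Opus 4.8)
The plan is to establish the two implications separately. Throughout I use that, by the definition of an $\div{I}$-involutive basis of $I$, the set $G$ generates $I$ and is $\div{I}$-autoreduced, so that $\ideal{G}=I$ and, by the uniqueness statement of~\ref{SSS:UnicityAdditivityNormalForms}, $\normf_{\div{I},\preccurlyeq}^G$ is a well-defined map on $\K[x_1,\ldots,x_n]$.

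The implication from $\normf_{\div{I},\preccurlyeq}^G(f)=0$ to $f\in I$ is immediate from reading off a reduction sequence. If $0$ is the involutive normal form of $f$, there is a chain
\[
f=f_0\ofl{{g_1}_\preccurlyeq}_\div{I} \; f_1 \ofl{{g_2}_\preccurlyeq}_\div{I} \; \ldots \; \ofl{{g_k}_\preccurlyeq}_\div{I} \; f_k=0,
\]
in which every step replaces $f_{i-1}$ by $f_{i-1}-\tfrac{\lambda_i u_i}{\lt_\preccurlyeq(g_i)}g_i$ for some $g_i\in G$. Summing the steps exhibits $f$ as a $\K[x_1,\ldots,x_n]$-linear combination of elements of $G$, so $f\in\ideal{G}=I$; this is exactly the ``only if'' half of Proposition~\ref{Proposition:decompositionAutoreduite}.

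For the converse I would exploit the linearity of the involutive normal form. Given $f\in I=\ideal{G}$, write $f=\sum_k p_k g_k$ with $g_k\in G$ and $p_k\in\K[x_1,\ldots,x_n]$, and expand each coefficient into terms, $p_k=\sum_j c_{k,j}u_{k,j}$ with $c_{k,j}\in\K$ and $u_{k,j}\in\Mr(x_1,\ldots,x_n)$. Because $G$ is $\div{I}$-autoreduced, the additivity formula of~\ref{SSS:UnicityAdditivityNormalForms}, combined with the scalar-homogeneity $\normf_{\div{I},\preccurlyeq}^G(\lambda h)=\lambda\normf_{\div{I},\preccurlyeq}^G(h)$, shows that $\normf_{\div{I},\preccurlyeq}^G$ is $\K$-linear; hence
\[
\normf_{\div{I},\preccurlyeq}^G(f)=\sum_{k,j}c_{k,j}\,\normf_{\div{I},\preccurlyeq}^G(u_{k,j}g_k).
\]
Each summand vanishes, since the defining property of an $\div{I}$-involutive basis is precisely that $\normf_{\div{I},\preccurlyeq}^G(gu)=0$ for every $g\in G$ and every monomial $u$. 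Therefore $\normf_{\div{I},\preccurlyeq}^G(f)=0$.

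The whole weight of the argument rests on the linearity of the normal form, so the step I expect to need the most care is the appeal to~\ref{SSS:UnicityAdditivityNormalForms}: one must make sure both that $\normf_{\div{I},\preccurlyeq}^G$ is well-defined (uniqueness of the normal form) and that it is additive, each of which uses the autoreducedness of $G$ and the decomposition of Proposition~\ref{Proposition:decompositionAutoreduite}. Scalar-homogeneity itself is harmless --- a reduction step $h\mapsto h-\tfrac{\lambda u}{\lt_\preccurlyeq(g)}g$ scales under $h\mapsto\mu h$ to $\mu h\mapsto\mu h-\tfrac{\mu\lambda u}{\lt_\preccurlyeq(g)}g$ --- so homogeneity and additivity together yield full $\K$-linearity. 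A heavier alternative would set $h=\normf_{\div{I},\preccurlyeq}^G(f)$, note $h\in I$ is $\div{I}$-irreducible hence (by Proposition~\ref{Proposition:reducibilityInvolutiveReducibility}) classically irreducible modulo $G$, and then force $h=0$ by proving that $\lm_\preccurlyeq(G)$ meets every leading monomial of $I$, i.e.\ that an involutive basis is a Gr\"obner basis; I would avoid this detour in favour of the linearity argument.
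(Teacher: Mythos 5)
Your proof is correct and follows essentially the same route as the paper: the forward implication reads off the reduction sequence (i.e.\ the ``only if'' half of Proposition~\ref{Proposition:decompositionAutoreduite}), and the converse decomposes $f=\sum_k p_k g_k$, expands the coefficients into terms, and concludes by $\K$-linearity of $\normf_{\div{I},\preccurlyeq}^G$ together with the defining property $\normf_{\div{I},\preccurlyeq}^G(gu)=0$ of an involutive basis. Your explicit remark that linearity requires both the additivity and the uniqueness statements of~\ref{SSS:UnicityAdditivityNormalForms}, which in turn rest on the autoreducedness of $G$, is a useful precision that the paper leaves implicit under the phrase ``by linearity of the operator''.
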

\begin{proof}
If $\normf_{\div{I},\preccurlyeq}^G(f)=0$, then the polynomial $f$ can be written in the form \ref{Proposition:decompositionAutoreduite}. This shows that $f$ belongs to the ideal $I$.
Conversely, suppose that $f$ belongs to $I$, then it can be decomposed in the form
\[
f = \sum_i h_ig_i,
\]
where $h_i=\sum_j\lambda_{i,j}u_{i,j}\in \K[x_1,\ldots,x_n]$. The set $G$ being $\div{I}$-involutive, we have $\normf_{\div{I},\preccurlyeq}^G(u_{i,j}g_i)=0$, for any monomials $u_{i,j}$ and $g_i$ in $G$. By linearity of the operator $\normf_{\div{I},\preccurlyeq}^G(-)$, we deduce that $\normf_{\div{I},\preccurlyeq}^G(f)=0$. 
\end{proof}

\subsubsection{Local involutivity}
V. P. Gerdt and Y. A. Blinkov introduced in~\cite{GerdtBlinkovYuri98} the notion of local involutivity for a set of polynomials. A set $G$ of polynomials of $\K[x_1,\ldots,x_n]$ is said to be \emph{locally involutive} if the following condition holds
\[
\forall g \in G,\;
\forall x\in \nonmult_\div{I}^{\lm_\preccurlyeq(G)}(\lm_\preccurlyeq(g)),\quad
\normf_{\div{I},\preccurlyeq}^G(gx)=0.
\]
For a continuous involutive division $\div{I}$, they prove that a $\div{I}$-autoreduced set of polynomials is involutive if and only if it is locally involutive, {\cite[Theorem 6.5]{GerdtBlinkovYuri98}}. This criterion of local involutivity is essential for computing the completion of a set of polynomials into an involutive basis. Note that this result is analogous to the critical pair lemma in rewriting theory stating that a rewriting system is locally confluent if and only if all its critical pairs are confluent, see e.g. {\cite[Sect. 3.1]{GuiraudMalbos18}}. Together with the Newman Lemma stating that for terminating rewriting, local confluence and confluence are equivalent properties, this gives a constructive method to prove confluence in a terminating rewriting system by analyzing the confluence of critical pairs.

\subsubsection{Completion procedure}
For a given monomial order $\preccurlyeq$ on $\Mr(x_1,\ldots,x_n)$ and a continuous and constructive involutive division $\div{I}$, as defined in {\cite[Definition 4.12]{GerdtBlinkovYuri98}},
the Procedure~\ref{P:InvolutiveCompletionBasis} computes an $\div{I}$-involutive basis of an ideal from a set of generators of the ideal.  We refer the reader to {\cite[Sect. 8]{GerdtBlinkovYuri98}} or {\cite[Sect. 4.4]{Evans06}} for correctness of this procedure and conditions for its termination. This procedure is in the same vein as the completion procedure for rewriting systems by Knuth-Bendix, \cite{KnuthBendix70}, and completion procedure for commutative polynomials by Buchberger, \cite{Buchberger65}.

\begin{algorithm}
\SetAlgoLined
\KwIn{$F$ a finite set of polynomials in $\K[x_1,\ldots,x_n]$.}

\BlankLine

\Begin{

$F' \leftarrow {\bf Autoreduce}_{\div{I},\preccurlyeq}(F)$

$G \leftarrow \emptyset$

\BlankLine

\While{$G=\emptyset$}{

$\Pr \leftarrow \{fx \;|\; f\in F', x\in \nonmult_{\div{I},\preccurlyeq}^{F'}(f)\}$

$p' \leftarrow 0$

\While{$\Pr \neq \emptyset$ and $p'=0$}{

\BlankLine

{\bf choose} $p$ in $\Pr$ such that $\lm_\preccurlyeq(p)$ is minimal with respect to $\preccurlyeq$. 

$\Pr \leftarrow \Pr\setminus\{p\}$ 

$p' \leftarrow {\bf InvReduction}_{\div{I},\preccurlyeq}(p,F')$

}

\If{$p'\neq 0$}{

$F' \leftarrow {\bf Autoreduce}_{\div{I},\preccurlyeq}(F'\cup\{p'\})$

}

\Else{

$G\leftarrow F'$

}}}

\BlankLine

\KwOut{$G$ an $\div{I}$-involutive basis of the ideal generated by $F$ with respect to the monomial order $\preccurlyeq$.}
\caption{${\bf InvolutiveCompletionBasis}_{\div{I},\preccurlyeq}(F)$}
\label{P:InvolutiveCompletionBasis}
\end{algorithm}

\subsubsection{Example: computation of an involutive basis}
Let $I$ be the ideal of $\Q[x_1,x_2]$ generated by the set $F=\{f_1,f_2\}$, where the polynomial $f_1$ and $f_2$ are defined by
\begin{align*}
f_1 &= x_2^2-2x_1x_2+1,\\
f_2 &= x_1x_2 - 3x_1^2 -1.
\end{align*}
We compute an involutive basis of the ideal $I$ with respect to the Janet division $\div{J}$ and the deglex order induced by $x_2>x_1$. 
We have $\lm(f_1)=x_2^2$ and $\lm(f_2)=x_1x_2$, hence the following $\div{J}$-reductions
\[
x_2^2 \ofl{f_1}_{\div{J}} \; 2x_1x_2 - 1,
\qquad
x_1x_2 \ofl{f_2}_{\div{J}} \; 3x_1^2 + 1.
\]
The polynomial $f_1$ is $\div{J}$-reducible by $f_2$, we have
\[
f_1 \ofl{f_2}_\div{J} \; x_2^2 - 2(3x_1^2+1) + 1 = x_2^2 - 6x_1^2-1.
\]
Thus, we set $f_3=x_2^2-6x_1^2-1$ and we consider the reduction 
\[
x_2^2 \ofl{f_3}_{\div{J}} \; 6x_1^2+1.
\]
The set $F'=\{f_2,f_3\}$ is $\div{J}$-autoreduced and generates the ideal $I$. 

Let us compute the multiplicative variables of the polynomials $f_2$ and $f_3$. 
We have $\deg_2(F')=\deg_2(\{x_2^2,x_1x_2\})=2$, $[1]=\{x_1x_2\}$ and $[2]=\{x_2^2\}$.  Hence the $\div{J}$-multiplicative variables are given by the following table:
\begin{center}
\begin{tabular}{c|c|cc}
$f$ & $\lm(f)$ & $\mult_{\div{J}}^{F'}(f)$\\
\hline
$f_2$ & $x_1x_2$ & $x_1$ &\\
$f_3$ & $x_2^2$ & $x_1$ & $x_2$\\
\end{tabular}
\end{center}

The polynomial $f_2x_2 = x_1x_2^2-3x_1^2x_2-x_2$ is the only non-multiplicative prolongation to consider. This prolongation can be reduced as follows
\[
f_2x_2 \ofl{f_3}_{\div{J}} \; 6x_1^3 + x_1 -3x_1^2x_2 - x_2 \ofl{f_2}_{\div{J}} \; -3x_1^3 - 2x_1 - x_2.
\]
We set $f_4=-3x_1^3 - 2x_1 - x_2$, whose associated reduction is
\[
x_1^3 \ofl{f_4}_{\div{J}} \; -\frac{2}{3}x_1 - \frac{1}{3}x_2,
\]
and we set $F'=\{f_2,f_3,f_4\}$. 
We have $\deg_2(F')=2$, $[0]=\{x_1^3\}$, $[1]=\{x_1x_2\}$ and $[2]=\{x_2^2\}$.  Hence the $\div{J}$-multiplicative variables are given by the following table:
\begin{center}
\begin{tabular}{c|c|cc}
$f$ & $\lm(f)$ & $\mult_{\div{J}}^{F'}(f)$\\
\hline
$f_2$ & $x_1x_2$ & $x_1$ &\\
$f_3$ & $x_2^2$ & $x_1$ & $x_2$\\
$f_4$ & $x_1^3$ & $x_1$ &\\
\end{tabular}
\end{center}
There are two non-multiplicative prolongations to consider:
\[
f_2x_2 = x_1x_2^2 - 3x_1^2x_2 - x_2,\qquad
f_4x_2 = -3x_1^3x_2 - 2x_1x_2 - x_2^2.
\]
We have $\lm(f_2x_2)=x_1x_2^2 < \lm(f_4x_2)=x_1^3x_2$. Hence the prolongation $f_2x_2$ must first be examined. We have the following reductions:
\[
f_2x_2 \ofl{f_3}_{\div{J}} \;6x_1^3 + x_1 - 3x_1^2x_2 - x_2 \ofl{f_2}_{\div{J}} \; -3x_1^3 -2x_1 - x_2 \ofl{f_4}_{\div{J}} \; 0.
\]
Hence, there is no polynomial to add.
The other non-multiplicative prolongation is $f_4x_2$, that can be reduced to an $\div{J}$-irreducible polynomial as follows:
\[
f_4x_2 
\ofl{f_2}_{\div{J}} \;
-3x_1^3x_2 - 6x_1^2 - x_2^2-2
\ofl{f_3}_{\div{J}} \;
-3x_1^3x_2-12x_1^2-3
\]
\[
\hspace{6cm}
\ofl{f_2}_{\div{J}} \; -9x_1^4-15x_1^2-3
\ofl{f_4}_{\div{J}} \;
3x_1x_2 - 9x_1^2 - 3
\ofl{f_2}_{\div{J}} \;
0.
\hspace{3cm}
\]
All the non-multiplicative prolongations are $\div{J}$-reducible to $0$, it follows that the set $F'$ is a Janet basis of the ideal $I$.

\subsection{Involutive bases and Gr\"{o}bner bases}

In this subsection, we recall the notion of Gr\"{o}bner basis and we show that any involutive basis is a Gr\"{o}bner basis. We fix a monomial order $\preccurlyeq$ on~$\Mr(x_1,\ldots,x_n)$.

\subsubsection{Gr\"{o}bner bases}
A subset $G$ of $\K[x_1,\ldots,x_n]$ is a \emph{Gr\"{o}bner basis} with respect to the monomial order~$\preccurlyeq$ if it is finite and satisfies one of the following equivalent conditions 
\begin{enumerate}[{\bf i)}]
\item $\ofl{G_\preccurlyeq}$ is Church-Rosser,
\item $\ofl{G_\preccurlyeq}$ is confluent,
\item $\ofl{G_\preccurlyeq}$ is locally confluent,
\item $\ofl{G_\preccurlyeq}$ has unique normal forms,
\item $f\overset{\displaystyle G_\preccurlyeq}{\fll^\ast} 0$, for all polynomial $f$ in $\ideal{G}$,
\item every polynomial $f$ in $\ideal{G}\setminus\{0\}$ is reducible modulo $G$,
\item for any term $t$ in $\lt_\preccurlyeq(\ideal{G})$, there is $g$ in $G$ such that $\lt_\preccurlyeq(g)$ divides $t$,
\item $S_{\preccurlyeq}(g_1,g_2) \overset{\displaystyle G}{\fll^\ast} 0$ for all $g_1,g_2$ in $G$, where 
\[
S_{\preccurlyeq}(g_1,g_2) = \frac{\mu}{\lt_\preccurlyeq(g_1)} g_1 - \frac{\mu}{\lt_\preccurlyeq(g_2)}g_2,
\]
with $\mu = \mathrm{ppcm}(\lm_\preccurlyeq(g_1),\lm_\preccurlyeq(g_2))$, is the \emph{$S$-polynomial} of $g_1$ and $g_2$ with respect to the monomial order $\preccurlyeq$,
\item any critical pair 
\[
\xymatrix{
& 
\mu 
\ar[dl] _-{\frac{\mu}{\lt(g_1)} g_1}
\ar[dr] ^-{\frac{\mu}{\lt(g_2)} g_2} 
&\\
\mu - \frac{\mu}{\lt(g_1)}g_1
&&
\mu - \frac{\mu}{\lt(g_2)}g_2
}
\]
with $\mu = \mathrm{ppcm}(\lm(g_1),\lm(g_2))$, of the relation $\ofl{G}$ is confluent.
\end{enumerate}

We refer the reader to {\cite[Theorem 5.35]{BeckerWeispfenning93}} for proofs of these equivalences, see also {\cite[Section 3]{GuiraudHoffbeckMalbos19}}, \cite{Malbos19ACM}. The equivalence of conditions {\bf i)}-{\bf iv)} are classical results for terminating rewriting systems. Note that condition {\bf viii)} corresponds to the Buchberger criterion, \cite{Buchberger65}, and condition {\bf ix)} is a formulation of this criterion in rewriting terms. We refer to {\cite[Chapter 8]{BaaderNipkow98}} for the rewriting interpretation of the Buchberger algorithm.

\bigskip

A \emph{Gr\"{o}bner basis of an ideal $I$} of $\K[x_1,\ldots,x_n]$ with respect to a monomial order $\preccurlyeq$ is a Gr\"{o}bner basis with respect to  $\preccurlyeq$  that generates the ideal $I$. This can be also be formulated saying that $G$ is a generating set for $I$ such that $\ideal{\lt(G )} = \ideal{\lt(I)}$.

\subsubsection{Involutive bases and Gr\"{o}bner bases}
Let $I$ be an ideal of $\K[x_1,\ldots,x_n]$. 
Suppose that $G$ is an involutive basis of the ideal $I$ with respect to an involutive division $\div{I}$ and the monomial order $\preccurlyeq$. In particular, the set $G$ generates the ideal $I$. For every $g_1$ and $g_2$ in $G$, we consider the $S$-polynomial $S_{\preccurlyeq}(g_1,g_2)$ with respect to $\preccurlyeq$. By definition, the polynomial~$S_{\preccurlyeq}(g_1,g_2)$ belongs to the ideal $I$.
By involutivity of the set $G$ and following \ref{SSS:UnicityNormalForms} and Proposition~\ref{Proposition:formeNormaleNulle}, we have 
\[
\normf^G(S_{\preccurlyeq}(g_1,g_2))=\normf_\div{I}^G(S_{\preccurlyeq}(g_1,g_2))=0.
\] 
In this way, $G$ is a Gr\"{o}bner basis of the ideal $I$ by the Buchberger criterion {\bf viii)}. We have thus proved the following result due to V.P. Gerdt and Y.A. Blinkov.

\begin{theorem}[{\cite[Corollary 7.2]{GerdtBlinkovYuri98}}]
Let $\preccurlyeq$ be a monomial order on $\Mr(x_1,\ldots,x_n)$ and $\div{I}$ be an involutive division on $\K[x_1,\ldots,x_n]$.
Any $\div{I}$-involutive basis of an ideal $I$ of $\K[x_1,\ldots,x_n]$ is a Gr\"{o}bner basis of $I$.
\end{theorem}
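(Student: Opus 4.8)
The plan is to establish the theorem by checking the Buchberger criterion, namely condition \textbf{viii)} in the list of equivalent characterisations of Gröbner bases: a finite generating set $G$ of $I$ is a Gröbner basis as soon as $S_\preccurlyeq(g_1,g_2)$ reduces to $0$ for the relation $\ofl{G_\preccurlyeq}$, for every pair $g_1,g_2$ in $G$. Since an $\div{I}$-involutive basis $G$ of $I$ is in particular a finite generating set of $I$, it remains only to verify this reducibility condition on $S$-polynomials.

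First I would fix $g_1,g_2$ in $G$ and observe that, by definition, the $S$-polynomial
\[
S_\preccurlyeq(g_1,g_2) = \frac{\mu}{\lt_\preccurlyeq(g_1)}g_1 - \frac{\mu}{\lt_\preccurlyeq(g_2)}g_2, \qquad \mu = \mathrm{ppcm}(\lm_\preccurlyeq(g_1),\lm_\preccurlyeq(g_2)),
\]
is a $\K[x_1,\ldots,x_n]$-linear combination of elements of $G$, hence lies in the ideal $I$. Since $G$ is an $\div{I}$-involutive basis of $I$, Proposition~\ref{Proposition:formeNormaleNulle} applies to the polynomial $f = S_\preccurlyeq(g_1,g_2)$ and yields
\[
\normf_{\div{I},\preccurlyeq}^G(S_\preccurlyeq(g_1,g_2)) = 0.
\]

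The crux of the argument — and the step I expect to be the main obstacle — is to pass from the vanishing of the \emph{involutive} normal form to the vanishing of an \emph{ordinary} normal form, as required by the Buchberger criterion, which is phrased in terms of the classical reduction $\ofl{G_\preccurlyeq}$. This is exactly what the analysis of \ref{SSS:UnicityNormalForms} provides: for an $\div{I}$-involutive basis, ordinary reducibility and $\div{I}$-reducibility coincide by Proposition~\ref{Proposition:reducibilityInvolutiveReducibility}, and combined with the unicity of the involutive normal form this gives the identity $\normf_\preccurlyeq^G(f) = \normf_{\div{I},\preccurlyeq}^G(f)$ for every polynomial $f$. Applying it to $f = S_\preccurlyeq(g_1,g_2)$ shows $\normf_\preccurlyeq^G(S_\preccurlyeq(g_1,g_2)) = 0$, that is, $S_\preccurlyeq(g_1,g_2) \overset{\displaystyle G_\preccurlyeq}{\fll^\ast} 0$. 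As $g_1,g_2$ were arbitrary, condition \textbf{viii)} holds and $G$ is a Gröbner basis of $I$, which concludes the proof.
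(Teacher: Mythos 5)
Your proof is correct and follows essentially the same route as the paper: the $S$-polynomial lies in $I$, so Proposition~\ref{Proposition:formeNormaleNulle} gives a vanishing involutive normal form, and the identity $\normf_\preccurlyeq^G = \normf_{\div{I},\preccurlyeq}^G$ from \ref{SSS:UnicityNormalForms} (via Proposition~\ref{Proposition:reducibilityInvolutiveReducibility}) transfers this to the classical reduction, so that the Buchberger criterion \textbf{viii)} applies. Your write-up merely makes explicit the step the paper compresses into one sentence, namely why involutive and ordinary normal forms coincide for an involutive basis.
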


The involutive division used to define involutive bases being a refinement of the classical division with respect to which the Gröbner bases are defined, the converse of this result is false in general.

\begin{small}
\bibliographystyle{alpha}
\bibliography{biblioCURRENT}

\newcommand{\etalchar}[1]{$^{#1}$}
\def\cprime{$'$}
\begin{thebibliography}{BGLHR12}

\bibitem[BCG{\etalchar{+}}91]{BC3G91}
R.~L. Bryant, S.~S. Chern, R.~B. Gardner, H.~L. Goldschmidt, and P.~A.
  Griffiths.
\newblock {\em Exterior differential systems}, volume~18 of {\em Mathematical
  Sciences Research Institute Publications}.
\newblock Springer-Verlag, New York, 1991.

\bibitem[Ber78]{Bergman78}
G.~M. Bergman.
\newblock The diamond lemma for ring theory.
\newblock {\em Adv. in Math.}, 29(2):178--218, 1978.

\bibitem[BGLHR12]{BachlerGerdtLange-HegermannRobertz12}
T.~B\"achler, V.~Gerdt, M.~Lange-Hegermann, and D.~Robertz.
\newblock Algorithmic {T}homas decomposition of algebraic and differential
  systems.
\newblock {\em J. Symbolic Comput.}, 47(10):1233--1266, 2012.

\bibitem[BN98]{BaaderNipkow98}
F.~Baader and T.~Nipkow.
\newblock {\em Term rewriting and all that}.
\newblock Cambridge University Press, 1998.

\bibitem[Bok76]{Bokut76}
L.~A. Bokut.
\newblock Imbeddings into simple associative algebras.
\newblock {\em Algebra i Logika}, 15(2):117--142, 245, 1976.

\bibitem[Buc65]{Buchberger65}
B.~Buchberger.
\newblock {\em {Ein Algorithmus zum Auffinden der Basiselemente des
  Restklassenringes nach einem nulldimensionalen Polynomideal (An Algorithm for
  Finding the Basis Elements in the Residue Class Ring Modulo a Zero
  Dimensional Polynomial Ideal)}}.
\newblock PhD thesis, University of Innsbruck, Austria, 1965.

\bibitem[Buc70]{Buchberger70}
B.~Buchberger.
\newblock Ein algorithmisches {K}riterium f\"ur die {L}\"osbarkeit eines
  algebraischen {G}leichungssystems.
\newblock {\em Aequationes Math.}, 4:374--383, 1970.

\bibitem[Buc87]{Buchberger87}
B.~Buchberger.
\newblock History and basic features of the critical-pair/completion procedure.
\newblock {\em J. Symbolic Comput.}, 3(1-2):3--38, 1987.
\newblock Rewriting techniques and applications (Dijon, 1985).

\bibitem[Buc06]{Buchberger06}
B.~Buchberger.
\newblock An algorithm for finding the basis elements of the residue class ring
  of a zero dimensional polynomial ideal.
\newblock {\em J. Symbolic Comput.}, 41(3-4):475--511, 2006.
\newblock Translated from the 1965 German original by Michael P. Abramson.

\bibitem[BW93]{BeckerWeispfenning93}
T.~Becker and V.~Weispfenning.
\newblock {\em Gr\"obner bases}, volume 141 of {\em Graduate Texts in
  Mathematics}.
\newblock Springer-Verlag, New York, 1993.
\newblock A computational approach to commutative algebra, In cooperation with
  Heinz Kredel.

\bibitem[{Car}99]{Cartan1899}
{\'E}.~{Cartan}.
\newblock {Sur certaines expressions diff\'erentielles et le probl\`eme de
  Pfaff.}
\newblock {\em {Ann. Sci. \'Ec. Norm. Sup\'er. (3)}}, 16:239--332, 1899.

\bibitem[{Car}01]{Cartan01}
{\'E}.~{Cartan}.
\newblock {Sur l'int\'egration des syst\`emes d'\'equations aux
  diff\'erentielles totales.}
\newblock {\em {Ann. Sci. \'Ec. Norm. Sup\'er. (3)}}, 18:241--311, 1901.

\bibitem[{Car}04]{Cartan04}
{\'E}.~{Cartan}.
\newblock {Sur la structure des groupes infinis de transformations.}
\newblock {\em {Ann. Sci. \'Ec. Norm. Sup\'er. (3)}}, 21:153--206, 1904.

\bibitem[{Car}45]{Cartan45}
{\'E}.~{Cartan}.
\newblock {\em Les syst\`emes diff\'erentiels ext\'erieurs et leurs
  applications g\'eom\'etriques}.
\newblock Actualit\'es Sci. Ind., no. 994. Hermann et Cie., Paris, 1945.

\bibitem[{Dar}82]{Darboux82}
G.~{Darboux}.
\newblock {Sur le probl\`eme de Pfaff.}
\newblock {\em {Bull. Sci. Math. et Astro.}}, 6:14--36, 49--68, 1882.

\bibitem[Deh10]{Dehn10}
M.~Dehn.
\newblock \"{U}ber die {T}opologie des dreidimensionalen {R}aumes.
\newblock {\em Math. Ann.}, 69(1):137--168, 1910.

\bibitem[Dic13]{Dickson13}
L.~E. Dickson.
\newblock Finiteness of the {O}dd {P}erfect and {P}rimitive {A}bundant
  {N}umbers with {$n$} {D}istinct {P}rime {F}actors.
\newblock {\em Amer. J. Math.}, 35(4):413--422, 1913.

\bibitem[Eis95]{Eisenbud95}
D.~Eisenbud.
\newblock {\em Commutative algebra}, volume 150 of {\em Graduate Texts in
  Mathematics}.
\newblock Springer-Verlag, New York, 1995.
\newblock With a view toward algebraic geometry.

\bibitem[Eva06]{Evans06}
G.~A. Evans.
\newblock {\em Noncommutative Involutive Bases}.
\newblock PhD thesis, University of Wales, Bangor, February 2006.
\newblock PhD Thesis, 2006.

\bibitem[Fau99]{Faugere99}
J.-C. Faug\`ere.
\newblock A new efficient algorithm for computing {G}r\"obner bases {$(F_4)$}.
\newblock {\em J. Pure Appl. Algebra}, 139(1-3):61--88, 1999.
\newblock Effective methods in algebraic geometry (Saint-Malo, 1998).

\bibitem[Fau02]{Faugere02}
J.-C. Faug\`ere.
\newblock A new efficient algorithm for computing {G}r\"obner bases without
  reduction to zero {$(F_5)$}.
\newblock In {\em Proceedings of the 2002 {I}nternational {S}ymposium on
  {S}ymbolic and {A}lgebraic {C}omputation}, pages 75--83. ACM, New York, 2002.

\bibitem[{For}90]{Forsyth90}
A.~R. {Forsyth}.
\newblock {Theory of differential equations. Part I. Exact equations and
  Pfaff's problem.}
\newblock {Cambridge. University Press. XIII + 340 S. $8^{\circ}$ (1890).},
  1890.

\bibitem[Fro77]{Frobenius77}
G.~Frobenius.
\newblock Ueber das {P}faffsche {P}roblem.
\newblock {\em J. Reine Angew. Math.}, 82:230--315, 1877.

\bibitem[Gal85]{Galligo85}
A.~Galligo.
\newblock Some algorithmic questions on ideals of differential operators.
\newblock In {\em E{UROCAL} '85, {V}ol.\ 2 ({L}inz, 1985)}, volume 204 of {\em
  Lecture Notes in Comput. Sci.}, pages 413--421. Springer, Berlin, 1985.

\bibitem[GB98a]{GerdtBlinkovYuri98}
V.~P. Gerdt and Y.~A. Blinkov.
\newblock Involutive bases of polynomial ideals.
\newblock {\em Math. Comput. Simulation}, 45(5-6):519--541, 1998.
\newblock Simplification of systems of algebraic and differential equations
  with applications.

\bibitem[GB98b]{GerdtBlinkovYuri98b}
V.~P. Gerdt and Y.~A. Blinkov.
\newblock Minimal involutive bases.
\newblock {\em Math. Comput. Simulation}, 45(5-6):543--560, 1998.
\newblock Simplification of systems of algebraic and differential equations
  with applications.

\bibitem[Ger97]{Gerdt97}
V.~P. Gerdt.
\newblock Gr\"obner bases and involutive methods for algebraic and differential
  equations.
\newblock {\em Math. Comput. Modelling}, 25(8-9):75--90, 1997.
\newblock Algorithms and software for symbolic analysis of nonlinear systems.

\bibitem[Ger05]{Gerdt05}
V.~P. Gerdt.
\newblock Involutive algorithms for computing {G}r\"obner bases.
\newblock In {\em Computational commutative and non-commutative algebraic
  geometry}, volume 196 of {\em NATO Sci. Ser. III Comput. Syst. Sci.}, pages
  199--225. IOS, Amsterdam, 2005.

\bibitem[GHM19]{GuiraudHoffbeckMalbos19}
Y.~Guiraud, E.~Hoffbeck, and P.~Malbos.
\newblock {Convergent presentations and polygraphic resolutions of associative
  algebras}.
\newblock {\em Math. Z.}, 2019.
\newblock arXiv:1406.0815, to appear.

\bibitem[GM18]{GuiraudMalbos18}
Y~Guiraud and P.~Malbos.
\newblock Polygraphs of finite derivation type.
\newblock {\em Math. Structures Comput. Sci.}, 28(2):155--201, 2018.

\bibitem[{Gra}44]{Grassmann44}
H.~E. {Grassmann}.
\newblock {Die lineale Ausdehnungslehre}.
\newblock {Leipzig: Verlag von Otto Wigand. 324 p. }, 1844.

\bibitem[Gra72]{Grauert72}
H.~Grauert.
\newblock \"{U}ber die {D}eformation isolierter {S}ingularit\"aten analytischer
  {M}engen.
\newblock {\em Invent. Math.}, 15:171--198, 1972.

\bibitem[Gri83]{Griffiths83}
P.~A. Griffiths.
\newblock {\em Exterior differential systems and the calculus of variations},
  volume~25 of {\em Progress in Mathematics}.
\newblock Birkh\"auser, Boston, Mass., 1983.

\bibitem[Gr{\"o}37]{Grobner37}
W.~Gr{\"o}bner.
\newblock \"{U}ber das macaulaysche inverse system und dessen bedeutung f\"ur
  die theorie der linearen differentialgleichungen mit konstanten
  koeffizienten.
\newblock {\em Abh. Math. Sem. Univ. Hamburg}, 12(1):127--132, 1937.

\bibitem[Gr{\"o}49]{Grobner49}
W.~Gr{\"o}bner.
\newblock {\em Moderne algebraische {G}eometrie. {D}ie idealtheoretischen
  {G}rundlagen}.
\newblock Springer-Verlag, Wien und Innsbruck, 1949.

\bibitem[GS64]{Guillemin-Sternberg64}
V.~W. Guillemin and S.~Sternberg.
\newblock An algebraic model of transitive differential geometry.
\newblock {\em Bull. Amer. Math. Soc.}, 70:16--47, 1964.

\bibitem[G{\"u}n13a]{Gunther13c}
N.~M. G{\"u}nter.
\newblock {\"Uber die Elimination.}
\newblock {St. Petersburg, Sborn. Inst. put. Soob\v s\v c. 83, 1-10 (1913),
  auch separat (1914) (1913,1914).}, 1913.

\bibitem[G{\"u}n13b]{Gunther13b}
N.~G{\"u}nther.
\newblock {\"Uber die kanonische Form der Systeme kanonischer homogener
  Gleichungen.}
\newblock {Samml. des Inst. der Verkehrswege 82, 22 S. l~{\ss} (1913).}, 1913.

\bibitem[G{\"u}n13c]{Gunther13a}
N.~G{\"u}nther.
\newblock {\"Uber einige Zusammenh\"ange zwischen den homogenen Gleichungen.}
\newblock {Samml. des Inst. der Verkehrwege 82, 20 S. (1913).}, 1913.

\bibitem[{Gun}41]{Gunther41}
N.~M. {Gunther}.
\newblock {Sur les modules des formes alg\'ebriques.}
\newblock {Tr. Tbilis. Mat. Inst. 9, 97-206 (1941).}, 1941.

\bibitem[Her26]{Hermann26}
G.~Hermann.
\newblock Die {F}rage der endlich vielen {S}chritte in der {T}heorie der
  {P}olynomideale.
\newblock {\em Math. Ann.}, 95(1):736--788, 1926.

\bibitem[Hil90]{Hilbert1890}
D.~Hilbert.
\newblock Ueber die {T}heorie der algebraischen {F}ormen.
\newblock {\em Math. Ann.}, 36(4):473--534, 1890.

\bibitem[Hir64]{Hironaka64}
H.~Hironaka.
\newblock Resolution of singularities of an algebraic variety over a field of
  characteristic zero. {I}, {II}.
\newblock {\em Ann. of Math. (2) 79 (1964), 109--203; ibid. (2)}, 79:205--326,
  1964.

\bibitem[Hue80]{Huet80}
G.~Huet.
\newblock Confluent reductions: abstract properties and applications to term
  rewriting systems.
\newblock {\em J. Assoc. Comput. Mach.}, 27(4):797--821, 1980.

\bibitem[{Jac}27]{Jacobi27}
C.G.J. {Jacobi}.
\newblock {Ueber die Integration der partiellen Differentialgleichungen erster
  Ordnung.}
\newblock {\em {J. Reine Angew. Math.}}, 2:317--329, 1827.

\bibitem[Jan20]{Janet20}
M.~Janet.
\newblock Sur les syst{\`e}mes d'{\'e}quations aux d{\'e}riv{\'e}es partielles.
\newblock {\em Journal de mathématiques pures et appliquées}, 8(3):65--151,
  1920.

\bibitem[Jan22a]{Janet22a}
M.~Janet.
\newblock {Les caract\`eres des modules de formes et les syst\`emes
  d'\'equations aux d\'eriv\'ees partielles.}
\newblock {\em {C. R. Acad. Sci., Paris}}, 174:432--434, 1922.

\bibitem[Jan22b]{Janet22b}
M.~Janet.
\newblock {Sur les formes canoniques invariantes des syst\`emes alg\'ebriques
  et diff\'erentiels.}
\newblock {\em {C. R. Acad. Sci., Paris}}, 174:991--994, 1922.

\bibitem[Jan24]{Janet24}
M.~Janet.
\newblock {Les modules de formes alg\'ebriques et la th\'eorie g\'en\'erale des
  syst\`emes diff\'erentielles.}
\newblock {\em {Ann. Sci. \'Ec. Norm. Sup\'er. (3)}}, 41:27--65, 1924.

\bibitem[Jan29]{Janet29}
M.~Janet.
\newblock {Le\c{c}ons sur les syst\`emes d'\'equations aux d\'eriv\'ees
  partielles.}
\newblock {VIII + 124 p. Paris, Gauthier-Villars (Cahiers scientifiques
  publi\'es sous la direction de {\it G.~Julia}, fasc.~IV.) (1929).}, 1929.

\bibitem[K{\"a}h34]{Kahler34}
E.~K{\"a}hler.
\newblock {Einf\"uhrung in die Theorie der Systeme von
  Differentialgleichungen.}
\newblock {(Hamburg. Math. Einzelschr. 16) Leipzig, Berlin: B. G. Teubner IV,
  80 S. (1934).}, 1934.

\bibitem[KB70]{KnuthBendix70}
D.~Knuth and P.~Bendix.
\newblock Simple word problems in universal algebras.
\newblock In {\em Computational {P}roblems in {A}bstract {A}lgebra ({P}roc.
  {C}onf., {O}xford, 1967)}, pages 263--297. Pergamon, Oxford, 1970.

\bibitem[{Kro}82]{Kronecker92}
L.~{Kronecker}.
\newblock {Grundz\"uge einer arithmetischen Theorie der algebraischen
  Gr\"ossen. (Festschrift zu Herrn Ernst Eduard Kummers f\"unfzigj\"ahrigem
  Doctor-Jubil\"aum, 10 September 1881).}
\newblock {\em {J. Reine Angew. Math.}}, 92:1--122, 1882.

\bibitem[Kur57]{Kuranishi57}
M.~Kuranishi.
\newblock On {E}. {C}artan's prolongation theorem of exterior differential
  systems.
\newblock {\em Amer. J. Math.}, 79:1--47, 1957.

\bibitem[Lag72]{Lagrange72}
J.-L. Lagrange.
\newblock Sur l'int\'{e}gration des \'{e}quations \`{a} diff\'{e}rences
  partielles du premier ordre.
\newblock {\em M\'{e}m. Acad. Sci. et Belles-Lettres de Berlin}, pages
  353--372, 1772.

\bibitem[Lag88]{Lagrange88}
J.-L. Lagrange.
\newblock {\em Méchanique Analitique}.
\newblock Desaint, 1788.

\bibitem[Lew57]{Lewy57}
Hans Lewy.
\newblock An example of a smooth linear partial differential equation without
  solution.
\newblock {\em Ann. of Math. (2)}, 66:155--158, 1957.

\bibitem[{Mac}16]{Macaulay16}
F.~S. {Macaulay}.
\newblock {The algebraic theory of modular systems.}
\newblock {Cambridge: University press, XIV u. 112 S. $8^{\circ}$ (1916).},
  1916.

\bibitem[{Mac}27]{Macaulay27}
F.~S. {Macaulay}.
\newblock {Some properties of enumeration in the theory of modular systems.}
\newblock {\em {Proc. Lond. Math. Soc. (2)}}, 26:531--555, 1927.

\bibitem[Mal05]{Malgrange05}
B.~Malgrange.
\newblock {\em Syst\`emes diff\'{e}rentiels involutifs}, volume~19 of {\em
  Panoramas et Synth\`eses [Panoramas and Syntheses]}.
\newblock Soci\'{e}t\'{e} Math\'{e}matique de France, Paris, 2005.

\bibitem[Mal19]{Malbos19ACM}
P.~Malbos.
\newblock Noncommutative linear rewriting: applications and generalizations.
\newblock In {\em Two algebraic byways from differential equations: Gröbner
  Bases and Quivers}, Algorithms Comput. Math. Springer, Berlin, 2019.
\newblock hal-01718880, to appear.

\bibitem[Man96]{Mansfield96}
E.~L. Mansfield.
\newblock A simple criterion for involutivity.
\newblock {\em J. London Math. Soc. (2)}, 54(2):323--345, 1996.

\bibitem[Mar47a]{Markov47a}
A.~Markov.
\newblock On the impossibility of certain algorithms in the theory of
  associative systems.
\newblock {\em Doklady Akad. Nauk SSSR (N.S.)}, 55:583--586, 1947.

\bibitem[Mar47b]{Markov47b}
A.~Markov.
\newblock On the impossibility of certain algorithms in the theory of
  associative systems. {II}.
\newblock {\em Doklady Akad. Nauk SSSR (N.S.)}, 58:353--356, 1947.

\bibitem[New42]{Newman42}
M.~Newman.
\newblock On theories with a combinatorial definition of ``equivalence''.
\newblock {\em Ann. of Math. (2)}, 43(2):223--243, 1942.

\bibitem[Niv73]{Nivat73}
M.~Nivat.
\newblock Congruences parfaites et quasi-parfaites.
\newblock In {\em S\'eminaire {P}. {D}ubreil, 25e ann\'ee (1971/72),
  {A}lg\`ebre, {F}asc. 1, {E}xp. {N}o. 7}, page~9. Secr\'etariat
  Math\'ematique, Paris, 1973.

\bibitem[Noe21]{Noether1921}
E.~Noether.
\newblock Idealtheorie in ringbereichen.
\newblock {\em Mathematische Annalen}, 83:24--66, 1921.

\bibitem[Noe23]{Noether23}
E.~Noether.
\newblock Eliminationstheorie und allgemeine {I}dealtheorie.
\newblock {\em Math. Ann.}, 90(3-4):229--261, 1923.

\bibitem[{Noe}24]{Noether24}
E.~{Noether}.
\newblock {Eliminationstheorie und Idealtheorie.}
\newblock {\em {Jahresber. Dtsch. Math.-Ver.}}, 33:116--120, 1924.

\bibitem[{Pfa}02]{Pfaff15}
J.~F. {Pfaff}.
\newblock {Allgemeine Methode, partielle Differentialgleichungen zu integrieren
  (1815). Aus dem Lateinischen \"ubersetzt und herausgegeben von {\it G.
  Kowalewski.}.}
\newblock {84 S. $8^{\text {vo}}$ (Ostwalds Klassiker No. 129) (1902).}, 1902.

\bibitem[Pom78]{Pommaret78}
J.-F. Pommaret.
\newblock {\em Systems of partial differential equations and {L}ie
  pseudogroups}, volume~14 of {\em Mathematics and its Applications}.
\newblock Gordon \& Breach Science Publishers, New York, 1978.
\newblock With a preface by Andr{\'e} Lichnerowicz.

\bibitem[Pos47]{Post47}
E.~L. Post.
\newblock Recursive unsolvability of a problem of {T}hue.
\newblock {\em J. Symbolic Logic}, 12:1--11, 1947.

\bibitem[{Riq}93]{Riquier93}
Ch. {Riquier}.
\newblock {De l'existence des int\'egrales dans un syst\`eme diff\'erentiel
  quelconque.}
\newblock {\em {Ann. Sci. \'Ec. Norm. Sup\'er. (3)}}, 10:65--86, 1893.

\bibitem[{Riq}10]{Riquier10}
Ch. {Riquier}.
\newblock {Les syst\`emes d'\'equations aux d\'eriv\'ees partielles.}
\newblock {XXVII - 590 p. Paris, Gauthier-Villars.}, 1910.

\bibitem[{Riq}28]{Riquier28}
Ch. {Riquier}.
\newblock {La m\'ethode des fonctions majorantes et les syst\`emes
  d'\'equations aux d\'eriv\'ees partielles.}
\newblock {Paris: Gauthier-Villars (M\'emorial des sciences math\'ematiques,
  fasc. 32). 63 p. }, 1928.

\bibitem[Rob14]{Robertz14}
D.~Robertz.
\newblock {\em Formal algorithmic elimination for {PDE}s}, volume 2121 of {\em
  Lecture Notes in Mathematics}.
\newblock Springer, Cham, 2014.

\bibitem[Sch80]{Schreyer80}
F.-O. Schreyer.
\newblock {\em Die Berechnung von Syzygien mit dem verallgemeinerten
  Weierstrass'schen Divisionssatz}.
\newblock PhD thesis, Universit\"at Hamburg, Germany, 1980.

\bibitem[Sch92]{Schwarz92}
F.~Schwarz.
\newblock An algorithm for determining the size of symmetry groups.
\newblock {\em Computing}, 49(2):95--115, 1992.

\bibitem[Sei10]{Seiler10}
W.~M. Seiler.
\newblock {\em Involution}, volume~24 of {\em Algorithms and Computation in
  Mathematics}.
\newblock Springer-Verlag, Berlin, 2010.
\newblock The formal theory of differential equations and its applications in
  computer algebra.

\bibitem[Shi62]{Shirshov62}
A.~I. Shirshov.
\newblock {Some algorithmic problems for Lie algebras.}
\newblock {\em Sib. Mat. Zh.}, 3:292--296, 1962.

\bibitem[SS65]{Singer-Sternberg65}
I.~M. Singer and S.~Sternberg.
\newblock The infinite groups of {L}ie and {C}artan. {I}. {T}he transitive
  groups.
\newblock {\em J. Analyse Math.}, 15:1--114, 1965.

\bibitem[{Tho}37]{Thomas37}
J.~M. {Thomas}.
\newblock {Differential systems.}
\newblock {IX + 118 p. New York, American Mathematical Society (American
  Mathematical Society Colloquium Publications Vol. XXI) (1937).}, 1937.

\bibitem[Thu14]{Thue14}
A.~Thue.
\newblock {Probleme \"uber Ver\"anderungen von Zeichenreihen nach gegebenen
  Regeln.}
\newblock {\em {Kristiania Vidensk. Selsk, Skr.}}, 10:493--524, 1914.

\bibitem[{Tre}94]{Tresse94}
A.~{Tresse}.
\newblock {Sur les invariants diff\'erentie1s des groupes continus de
  transformations.}
\newblock {\em {Acta Math.}}, 18:1--88, 1894.

\bibitem[{van}30]{Waerden30}
B.~L. {van der Waerden}.
\newblock {Moderne Algebra. Bd. I. Unter Benutzung von Vorlesungen von {\it E.
  Artin} und {\it E. Noether}.}
\newblock {Die Grundlehren der mathematischen Wissenschaften in
  Einzeldarstellungen mit besonderer Ber\"ucksichtigung der Anwendungsgebiete.
  Bd. 23. Berlin: J. Springer, viii, 243 S. (1930).}, 1930.

\bibitem[{van}31]{Waerden31}
B.L. {van der Waerden}.
\newblock {Moderne Algebra. II. Teil. Unter Benutzung von Vorlesungen von E.
  Artin und E. Noether.}
\newblock {Die Grundlehren der mathematischen Wissenschaften in
  Einzeldarstellungen mit besonderer Ber\"ucksichtigung der Anwendungsgebiete.
  Bd. 34. Berlin: Julius Springer. vii, 216 S. (1931).}, 1931.

\bibitem[{von}75]{Kowalevsky75}
S.~{von Kowalevsky}.
\newblock {Zur Theorie der partiellen Differentialgleichungen.}
\newblock {\em {J. Reine Angew. Math.}}, 80:1--32, 1875.

\bibitem[vW00]{Weber00}
E.~von {Weber}.
\newblock {Vorlesungen \"uber das Pfaff'sche Problem und die Theorie der
  partiellen Differentialgleichungen erster Ordnung.}
\newblock {Leipzig: B. G. Teubner. XI + 622 S. gr. $8^\circ$ (1900).}, 1900.

\bibitem[{Wei}86]{Weierstrass1886}
K.~{Weierstrass}.
\newblock {Abhandlungen aus der Functionenlehre.}
\newblock {Berlin. J. Springer. IV. u. 262 S. gr. $8^{\circ}$ (1886).}, 1886.

\end{thebibliography}
\end{small}

\clearpage

\quad

\vfill

\begin{footnotesize}
\auteur{Kenji Iohara}{iohara@math.univ-lyon1.fr}
{Universit\'{e} de Lyon,\\
Institut Camille Jordan, CNRS UMR 5208\\
Universit\'{e} Claude Bernard Lyon 1\\
43, boulevard du 11 novembre 1918,\\
69622 Villeurbanne cedex, France}

\bigskip

\auteur{Philippe Malbos}{malbos@math.univ-lyon1.fr}
{Universit\'{e} de Lyon,\\
Institut Camille Jordan, CNRS UMR 5208\\
Universit\'{e} Claude Bernard Lyon 1\\
43, boulevard du 11 novembre 1918,\\
69622 Villeurbanne cedex, France}
\end{footnotesize}

\vspace{3cm}

\begin{small}---\;\;\today\;\;-\;\;\hhmm\;\;---\end{small} \hfill
\end{document}